\newtheorem{lem}{Lemma}[section]
\newtheorem{cor}[lem]{Corollary}
\newtheorem{prop}[lem]{Proposition}
\newtheorem{thm}[lem]{Theorem}
\theoremstyle{definition}
\newtheorem{exam}{Example}[section]
\newcommand \myexam[1]{\begin{exam}[#1]}
\numberwithin{equation}{section}
\renewcommand{\phi}{\varphi}                 
\renewcommand{\epsilon}{\varepsilon}
\newcommand\eset{\varnothing}
\newcommand\setm{\setminus}
\newcommand\bigtimes{\mathop{ \raisebox{-.15\height}{\huge$\times$} }}
\newcommand\inv{^{-1}}
\newcommand\textb{\text{\rm b}}
\newcommand\pib{\pi_{\textb}}
\newcommand\Lat{\operatorname{Lat}}
\newcommand\Latb{\operatorname{\Lat^{\textb}}}
\newcommand\clos{\operatorname{cl}}
\newcommand\rk{\operatorname{rk}}
\newcommand \ind{{:}}
\newcommand\ba{\mathbf{a}}
\newcommand\bc{\mathbf{c}}
\newcommand\bg{\mathbf{g}}
\newcommand\bh{\mathbf{h}}
\newcommand\bm{\mathbf{m}}
\newcommand\bu{\mathbf{u}}
\newcommand\bw{\mathbf{w}}
\newcommand\bx{\mathbf{x}}
\newcommand\by{\mathbf{y}}
\newcommand\bbR{\mathbb{R}}
\newcommand\bbZ{\mathbb{Z}}
\newcommand\1{{1_\fG}}
\newcommand \fC{\mathfrak C}
\newcommand \fG{\mathfrak G}
\newcommand \fM{\mathfrak M}
\newcommand \fW{\mathfrak W}
\newcommand \cA{\mathcal A}
\newcommand \cB{\mathcal B}
\newcommand \cP{\mathcal P}
\newcommand \cPf{\cP_\text{fin}}
\newcommand \cPb{\mathcal P_{\textb}}
\newcommand \cS{\mathcal S}
\renewcommand\H{H}
\newcommand\Phih{{(\Phi,h)}}
\newcommand\G{\Gamma}
\newcommand\IA{\operatorname{IA}}
\newcommand\EA{\operatorname{EA}}
\newcommand\II{\operatorname{I\mspace{1mu}I}}
\newcommand\EI{\operatorname{EI}}
\begin{document}

\begin{center}

\large
\textsc{
Lattice Points in Orthotopes \\and a Huge Polynomial Tutte Invariant \\ of Weighted Gain Graphs}\\[20pt]
Version of \today
\normalsize
\vskip20pt

{David Forge\footnote{Part of the research of this author was performed while visiting the State University of New York at Binghamton in 2005.  Part of his research was supported by the TEOMATRO project, grant number ANR-10-BLAN 0207, in 2013.}\\
Laboratoire de recherche en informatique UMR 8623\\
B\^at.\ 490, Universit\'e Paris-Sud\\
91405 Orsay Cedex, France\\
E-mail: {\tt forge@lri.fr}}\\[10pt]

and\\[10pt]

{Thomas Zaslavsky\footnote{Some of the work of this author was performed while visiting the Laboratoire de recherche en informatique, Universit\'e Paris-Sud, Orsay, in 2007.}\\
Department of Mathematical Sciences \\
State University of New York at Binghamton\\
Binghamton, NY 13902-6000, U.S.A.\\
E-mail: {\tt zaslav@math.binghamton.edu}}\\[20pt]

\end{center}

\small
{\sc Abstract.}
A \emph{gain graph} is a graph whose edges are orientably labelled from a group.  A \emph{weighted gain graph} is a gain graph with vertex weights from an abelian semigroup, where the gain group is lattice ordered and acts on the weight semigroup.  
For weighted gain graphs we establish basic properties and we present general dichromatic and forest-expansion polynomials that are Tutte invariants (they satisfy Tutte's deletion-contraction and multiplicative identities).  
In order to do that we develop a relative of the Tutte polynomial of a semimatroid.  
Our dichromatic polynomial includes the classical graph one by Tutte, Zaslavsky's two for gain graphs, Noble and Welsh's for graphs with positive integer weights, and that of rooted integral gain graphs by Forge and Zaslavsky.  
It is not a universal Tutte invariant of weighted gain graphs; that remains to be found.  

An evaluation of one example of our polynomial counts proper list colorations of the gain graph from a color set with a gain-group action.
When the gain group is $\bbZ^d$, the lists are order ideals in the integer lattice $\bbZ^d$, and there are specified upper bounds on the colors, then there is a formula for the number of bounded proper colorations that is a piecewise polynomial function of the upper bounds, of degree $nd$ where $n$ is the order of the graph.

This example leads to graph-theoretical formulas for the number of integer lattice points in an orthotope but outside a finite number of affinographic hyperplanes, and for the number of $n \times d$ integral matrices that lie between two specified matrices but not in any of certain subspaces defined by simple row equations.  

\bigskip
\emph{Mathematics Subject Classifications (2010)}:
{\emph{Primary} 05C22; \emph{Secondary} 05C15, 05C31, 52B20, 52C35.}

\bigskip
\emph{Key words and phrases}:
{Weighted gain graph, rooted gain graph, dichromatic polynomial, spanning-tree expansion, Tutte invariant, integral chromatic function, gain graph coloring, list coloring, affinographic arrangement, subspace arrangement, lattice point counting.}
 \normalsize\\

\thanks{Version of \today.}

\thanks{The research of the first author was performed, for the most part, while visiting the State University of New York at Binghamton.}

\vskip 20pt

\tableofcontents

\section{Introduction}\label{intro}

An \emph{integral orthotope} is a rectangular parallelepiped with integral vertices and edges parallel to the coordinate axes.  
An \emph{integral affinographic hyperplane} is a hyperplane of the form $x_j = x_i + a$, where $a$ is an integer.  
(All our orthotopes and affinographic hyperplanes will be integral.)  
We wish to count the points of the integer lattice that lie in an orthotope but not in any of a given \emph{arrangement} (a finite set) of affinographic hyperplanes.  
An arrangement is \emph{centered} if there is a point common to all hyperplanes.

\begin{thm}\label{T:geomorthotope}
Let $P := [0,m_1]\times\cdots\times[0,m_n]$, where $\bm=(m_1,\ldots,m_n) \in \bbZ_{\geq0}^n$, be an integral orthotope in $\bbR^n$ and let $\cA$ be an arrangement of affinographic hyperplanes.  
The number of integer points in $P \setm \bigcup\cA$ equals 
\begin{align*}
\sum_{\cB \subseteq \cA: \text{centered}} (-1)^{|\cB|} f_\cB(\bm)  ,
\end{align*}
where each $f_\cB$ is a function that depends linearly on the $m_i$ if they are sufficiently large.
\end{thm}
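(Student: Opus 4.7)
The plan is to reduce the count by inclusion-exclusion over sub-arrangements of $\cA$ and then to analyze each term via the gain-graph structure hidden inside an affinographic arrangement.

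I would start with the standard identity
\begin{align*}
\bigl|(P\cap\bbZ^n)\setm\bigcup\cA\bigr| = \sum_{\cB \subseteq \cA} (-1)^{|\cB|} \bigl|P\cap\bbZ^n\cap\bigcap\cB\bigr|.
\end{align*}
A subset $\cB$ of affinographic hyperplanes has nonempty common intersection exactly when $\cB$ is centered; equivalently, the gain graph whose edges are the elements of $\cB$, with the hyperplane $x_j=x_i+a$ becoming an edge $ij$ of gain $a$, is balanced. Noncentered subsets therefore contribute nothing and may be dropped from the sum.

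For a centered $\cB$, let $\Phi_\cB$ be the associated balanced gain graph on vertex set $[n]$ and let $V_1,\dots,V_k$ be its vertex components (counting each edgeless vertex as a component of its own). Balance lets me pick, for each component $V_p$, a base vertex $i_p$ and read off for every $j \in V_p$ a well-defined integer $\gamma_j^{(p)}$ (the gain of any $i_p$-to-$j$ walk in $\Phi_\cB$, with $\gamma_{i_p}^{(p)}=0$), so that $\bigcap\cB = \{\bx \in \bbR^n : x_j = x_{i_p} + \gamma_j^{(p)} \text{ whenever } j \in V_p\}$. Lattice points of $P \cap \bigcap\cB$ are then parametrized by a tuple $(x_{i_p})_{p=1}^k$ of free integers with $x_{i_p}$ constrained by $0 \le x_{i_p} + \gamma_j^{(p)} \le m_j$ for every $j \in V_p$, giving the factorization
\begin{align*}
f_\cB(\bm) = \prod_{p=1}^{k} \bigl|\bbZ \cap [\max_{j\in V_p}(-\gamma_j^{(p)}),\ \min_{j\in V_p}(m_j-\gamma_j^{(p)})]\bigr|.
\end{align*}

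Each factor is the length of an interval whose left endpoint is a constant and whose right endpoint is the minimum of affine functions of $\bm$, so each factor is piecewise linear in $\bm$. Once $\bm$ is large enough that the minimum in each $V_p$ is attained by a single, eventually fixed, index $j_p^\ast$, the factor for $V_p$ becomes a linear function of $m_{j_p^\ast}$ and is constant in the remaining coordinates; the product $f_\cB$ thus depends linearly in each $m_i$ separately, as claimed. The main obstacle is the bookkeeping for this piecewise-linear behavior: I must delineate the cone of $\bm$-space in which each argmin has stabilized and check that the nonemptiness conditions $\min_{j\in V_p}(m_j-\gamma_j^{(p)}) \ge \max_{j\in V_p}(-\gamma_j^{(p)})$ hold automatically there. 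The subsidiary fact that centeredness of $\cB$ is equivalent to balance of $\Phi_\cB$ is routine for affinographic arrangements but should be recorded as a preliminary lemma.
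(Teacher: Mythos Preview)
Your argument is correct and is essentially the paper's own proof, stripped of the coloring language: the paper reinterprets lattice points in $P\setminus\bigcup\cA$ as proper colorations of a weighted gain graph and invokes its Proposition on filtered list coloring, but that proposition is itself proved by exactly the inclusion--exclusion sum $\sum_{B\text{ balanced}}(-1)^{|B|}\prod_W|h/B(W)\cap M/B(W)|$, and your factor $\bigl|\bbZ\cap[\max_j(-\gamma_j^{(p)}),\min_j(m_j-\gamma_j^{(p)})]\bigr|$ is precisely the paper's $(1+\min_i[m_i+\phi(B_{v_it_k})]-g_k)^+$. So the route is the same; you simply bypass the intermediate translation into colorations.

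One imprecision worth tightening: you write that ``once $\bm$ is large enough the minimum in each $V_p$ is attained by a single, eventually fixed, index $j_p^\ast$.'' Largeness alone does not force a unique argmin---e.g.\ with $\gamma_1=\gamma_2=0$ the factor $\min(m_1,m_2)+1$ never settles on one index as $\bm\to\infty$. What is true (and what the paper states precisely in its fuller Theorem) is that for $\bm$ large enough the $(\cdot)^+$ truncation disappears, after which each factor is piecewise linear with domains given by which $m_j-\gamma_j^{(p)}$ is smallest; on each such cone the factor is affine in one coordinate and constant in the rest. Your own remark about ``delineating the cone of $\bm$-space in which each argmin has stabilized'' shows you already see this; just make sure the write-up says \emph{piecewise} linear rather than suggesting a single global linear form.
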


Theorem \ref{T:geomorthotope} can be regarded as a highly generalized graph coloring theorem.  The best way to see that is to specialize it to ordinary graphs.  
Let $\chi_\Gamma(m_1,\ldots,m_n)$ be the number of proper colorations of $\Gamma$ using the color set $\{0,1,\ldots,m_i\}$ for vertex $v_i$, where the $m_i$ are independently chosen nonnegative integers.  
A \emph{graphic hyperplane} is a hyperplane of the form $x_i=x_j$; call it $h_{ij}$.  (``Affinographic'' hyperplanes are affine transforms of graphic hyperplanes.)  The hyperplane arrangement corresponding to a simple graph $\Gamma$, say of order $n$, is $\cA := \{ h_{ij}: v_iv_j\in E(\Gamma)\}$.  
Thus, a proper coloration is the same as an integral point in $P := [0,m_1]\times\cdots\times[0,m_n]$ but not in any of the hyperplanes of $\cA$.
Let $\mu_\Gamma$ denote the M\"obius function of $\Lat\Gamma$, the lattice of closed subsets of $E(\Gamma)$ in the usual graphic matroid; $\Lat\Gamma$ is isomorphic to the lattice of intersections of hyperplanes in $\cA$.  For an edge set $B$, $\pi(B)$ is the partition of $V(\Gamma)$ induced by the components of $B$.  
The graphic specialization (which as far as we know is new) is this:

\begin{cor}\label{C:geomorthotope}
Let $m_1,\ldots,m_n \in \bbZ_{\geq0}$ and let $\Gamma$ be a simple graph of order $n$.  The number of proper colorations of $\Gamma$ such that the color of vertex $v_i$ is in the set $\{0,1,\ldots,m_i\}$ is
\begin{align*}
\chi_\Gamma(m_1,\ldots,m_n)
 &= \sum_{B \subseteq E(\Gamma)} (-1)^{|B|} \prod_{W_k \in \pi(B)}  ( 1 + \min_{v_i\in W_k} m_i)  \\
 &= \sum_{B \in \Lat\Gamma} \mu_\Gamma(\eset,B) \prod_{W_k \in \pi(B)}  ( 1 + \min_{v_i\in W_k} m_i)  .
\end{align*}
\end{cor}

If all $m_i=m$, the color set is $\{0,1,\ldots,m\}$ and $\chi_\Gamma(m,\ldots,m)$ equals the ordinary chromatic polynomial $\chi_\Gamma(m+1)$; then Corollary \ref{C:geomorthotope} reduces to well-known formulas for the ordinary chromatic polynomial.  Thus, the corollary is a limited list-coloring generalization of standard results.  In Corollary \ref{C:geomspec} we generalize to coloring from arbitrary finite lists.

Theorem \ref{T:geomorthotope} generalizes the main theorem of \cite{SOA}, which covered the case of hypercubes, where all $m_i=m$.  
It is a simplified version of Theorem \ref{T:geomorthotopemu}, which gives the exact form of $f_\cB$.  The proof is carried out in terms of proper colorations of \emph{weighted gain graphs}.  
Briefly, a gain graph is a graph whose edges are orientably labelled by elements of a group; that is, reversing the edge inverts the gain.  In a weighted gain graph the group is lattice ordered and the vertices carry weights from an abelian semigroup on which the gain group has an action.  For instance, the gain group and weight semigroup may be $(\bbZ,+)$ and $(\bbZ,\max)$.  
Theorem \ref{T:geomorthotope} comes from that kind of weighted gain graph: a vertex corresponds to a coordinate in $\bbR^n$ and an edge $v_iv_j$ with gain $a$ in the indicated direction corresponds to a hyperplane $x_j = x_i + a$.  (In the opposite direction, $v_jv_i$, the gain is $-a$.)

Our aim is to understand the mathematics behind the main result of \cite{SOA}, as well as Theorem \ref{T:geomorthotope} and more general results like Theorem \ref{T:geomorthospec}, by interpreting lattice-point--counting functions as Tutte invariants of weighted gain graphs.  A consequence is a great generalization of the techniques and results of \cite{SOA}, which also incorporates as a special case the theorem of Noble and Welsh \cite{NW} on graphs with positive integral weights.  (During the delayed revision of this paper for publication we found out about an independent generalization of \cite{NW} by Ellis-Monaghan and Moffatt \cite{EMM}; it is our generalization without gains, but with edge weights.  Their purpose is an application to statistical physics, for which they do not need gains.  For excluding lattice points in affinographic hyperplanes, gains are essential.)

Our ``huge polynomial Tutte invariant'' is $Q_\Phih(\bu,v,z)$, the total dichromatic polynomial of a weighted gain graph $\Phih$.  (For the definition see Equation \eqref{E:dichromaticpoly}.)  It is, first of all, a common generalization of the ordinary and balanced dichromatic polynomials of a gain graph from \cite[Part III]{BG}, which are direct generalizations of Tutte's dichromatic polynomial $Q_\G(u,v)$ of a graph \cite{TutteODP}.  In a weighted gain graph the variable $u$ of the standard dichromatic polynomial splits into a multitude of independent variables $u_h$, one for each possible vertex weight $h$; $\bu$ denotes the totality of all $u_h$.  Calling this polynomial a Tutte invariant is justified by the fact that $Q_\Phih(\bu,v,z)$ satisfies the standard deletion-contraction, multiplication, and normalization identities of the Tutte polynomial of an ordinary graph or matroid.  (The identities are stated in Section \ref{invariant}.)  

We obtain counting functions by substituting particular values for the weight variables $u_h$.  For instance, in Theorem \ref{T:geomorthotope} the weight semigroup is $\{[l,m]: l \leq m \in \bbZ\} \cup \{\eset\}$ with intersection as its operation and we get the lattice-point counting function by setting $u_{[l,m]} = -(m-l+1)$ and $u_\eset = 0$ (see Section \ref{affino}).
We even have a formula for the number of $n \times d$ integral matrices in an integral orthotope in $(\bbZ^d)^n$ that do not lie in any of a class of $d$-codimensional subspaces whose equations compare rows of the matrix (Section \ref{affinomatrix}).  

\begin{exam}\label{X:Qorder2i}
We do a very small example, just to show what a total dichromatic polynomial looks like.  
Our weighted gain graph $\Phih$ has $n=2$ vertices.  The gain group is $(\bbZ^2,+)$.  The weight semigroup is $(\bbZ^2,\max)$.  
The edge set is
$$
E = \{e_1=(0,0)(v_1,v_2),\ e_2=(2,0)(v_1,v_2),\ e_3=(-1,2)(v_1,v_2)\},
$$  
where the notation $(2,0)(v_1,v_2)$ means an edge with endpoints $v_1$ and $v_2$, whose gain is $(2,0)$ (in the additive group $\bbZ^2$) in the direction from $v_1$ to $v_2$ and consequently $(-2,0)$ in the opposite direction.
The weights of the vertices are $h_1 = (2,0)$ and $h_2 = (-1,3)$.
The total dichromatic polynomial is  
\begin{equation}\label{E:Qorder2i}
Q_\Phih(\bu,v,z) = u_{(2,0)}u_{(-1,3)} + u_{(2,3)} + u_{(4,3)} + u_{(22,3)} + 3z + vz .
\end{equation}
The $u_{(x,y)}$ are weight variables; there is one for each $(x,y)\in\bbZ^2$ but, as here, only finitely many appear in any one polynomial $Q_\Phih(\bu,v,z)$.  
Setting $z=1$ or $0$ gives the dichromatic and balanced dichromatic polynomials mentioned above.  

We derive \eqref{E:Qorder2i} in Example \ref{X:Qorder2}, after we explain the techniques.

Geometrically, the edges in this example, whose gains are 2-dimensional vectors rather than scalars as with the affinographic hyperplanes introduced at the beginning, correspond to subspaces of $(\bbR^2)^n$ (where $n=2$) whose codimension is 2.   Write the coordinates of $(\bbR^2)^2$ in the form $(\bx_1,\bx_2)\in(\bbR^2)^2$; that is, with $\bx_1,\bx_2\in\bbR^2$.   
The edges correspond to subspaces whose equations are, respectively, $\bx_2-\bx_1=(0,0)$, $\bx_2-\bx_1=(2,0)$, and $\bx_2-\bx_1=(-1,2)$.  

This example, with its multidimensional gains, illustrates the general integer-lattice type in Example \ref{X:Zd} and foreshadows the theory of Section \ref{affino}.
\end{exam}

We had hoped that the total dichromatic polynomial $Q_\Phih$ would be a universal Tutte invariant of weighted gain graphs, in the sense that every Tutte invariant is an evaluation of it.  However, we are sure it is not.  It takes no account of balanced loops and loose edges.  When we attempted to improve it, even without weights, by incorporating variables for those edges, there were relations amongst the variables that suggest a universal Tutte invariant more general than $Q_\Phih$ exists but is very complicated.  Finding that universal invariant, even for gain graphs without weights, is certain to be very challenging.

We proceed now to a section of technical definitions and then to the definition and properties of the total dichromatic polynomial, including a spanning-forest expansion (but not a spanning-tree expansion; that would be a semimatroid Tutte polynomial, for which we refer the reader to Ardila \cite{ArdilaSemi}).  
Then in Section \ref{coloration} we specialize to counting proper colorations and the application to lattice-point--counting problems.  We conclude with the connection to Noble and Welsh \cite{NW}.


\section{Weighted gain graphs}\label{defs}

Notation:  For a real number $x$ we write 
$$x^+ := \max(0,x), \qquad x^- := -\min(0,x),$$
the positive and negative parts of $x$ (so $x=x^+-x^-$).  Applied to a vector $\bx=(x_1,\ldots,x_d)$, $\bx^+:=(x_1^+,\ldots,x_d^+)$ and $\bx^-$ is similar (so $\bx=\bx^+-\bx^-$).  

\subsection{Graphs}\label{graphs}

Edges of a graph $\G=(V,E)$ are of four kinds.  A \emph{link} has two distinct endpoints; a \emph{loop} has two coinciding endpoints.  
A \emph{half edge} has one endpoint, and a \emph{loose edge} has no endpoints.  (Half and loose edges have a minor role in this paper; they appear only because of contraction.)  The set of loops and links is written $E^*$.  
Multiple edges are permitted.  We write $n := |V|$ and $V = \{v_1,v_2,\ldots,v_n\}$.  All our graphs have finite order and indeed are finite (except for root edges, when they appear).  
A \emph{(connected) component} of $\G$ is a maximal connected subgraph that is not a loose edge; we do not count a loose edge as a component.  
The number of components is $c(\G)$.  
For $S \subseteq E$, we denote by $c(S)$ the number of connected components of the spanning subgraph $(V,S)$ (which we call the \emph{components of $S$}) and by $\pi(S)$ the partition of $V$ into the vertex sets of the various components.  
We write $S_{vw}$ to denote any path in $S$ from $v$ to $w$ (if one exists).

\subsection{Gain graphs}\label{gaingraphs}

A \emph{gain graph} $\Phi = (\G,\phi)$ consists of a graph $\G=(V,E)$, a group $\fG$ called the \emph{gain group}, and an orientable function $\phi: E^* \to \fG$, called the \emph{gain mapping}.  
(Half edges do not have gains.  We may think of a loose edge as having gain $\1$.)  
The basic reference is \cite[Part I, Section 5]{BG}.  ``Orientability'' means that, if $e$ denotes an edge oriented in one direction and $e\inv$ the same edge with the opposite orientation, then $\phi(e\inv) = \phi(e)\inv$.  (Edges are undirected, i.e., they do not have fixed orientations.  We orient an edge only in order to state the value of its gain.)  
We sometimes use the simplified notations $e_{ij}$ for an edge with endpoints $v_i$ and $v_j$, oriented from $v_i$ to $v_j$, and $ge_{ij}$ for such an edge with gain $g$; that is, $\phi(ge_{ij})=g$.  (Thus $ge_{ij}$ is the same edge as $g\inv e_{ji}$.)  
The gain of a walk $e_1e_2\cdots e_l$ is $\phi(e_1e_2\cdots e_l) := \phi(e_1)\phi(e_2)\cdots\phi(e_l).$

In particular, an \emph{integral gain graph} has gain group $\fG=(\bbZ,+)$.

The notation $c(\Phi)$ means $c(\G)$.

An \emph{isomorphism} of two gain graphs $\Phi$ and $\Phi'$ with the same gain group $\fG$ is an isomorphism of underlying graphs that preserves edge gains.

A \emph{circle} is a connected 2-regular subgraph without half edges, or its edge set; for instance, a loop is a circle of length $1$.  We may write a circle $C$ as a word $e_1e_2\cdots e_l$; this means the edges are numbered consecutively around $C$ and oriented in a consistent direction.  
The gain $\phi(C)$ is well defined up to conjugation and inversion, and in particular it is well defined whether the gain is the identity $\1$ or not.
An edge set or subgraph is called \emph{balanced} if every circle in it has gain $\1$ and it has no half edges.  
If $B \subseteq E$ is balanced, the gain $\phi(B_{vw})$ of a path $B_{vw}$ is the same for every such path (if it exists).

An ordinary graph can be thought of as a gain graph where all edges have identity gain; thus, every circle is balanced.

For $W\subseteq V$, the subgraph \emph{induced} by $W$ is notated $\G\ind W$, or with gains $\Phi\ind W$.  The vertex set of $\G\ind W$ is $W$ and the edge set consists of all edges that have at least one endpoint in $W$ and no endpoint outside $W$; thus, half edges at vertices of $W$ are included, but loose edges are not.  
If $S \subseteq E$, then the subset of $S$ induced by $W$ is $S\ind W := E((V,S)\ind W)$.

\emph{Switching} $\Phi$ by a \emph{switching function} $\eta : V \to \fG : v_i \mapsto \eta(v_i)$ means replacing $\phi$ by 
$$\phi^\eta(e_{ij}) := \eta(v_i)\inv\phi(e_{ij})\eta(v_j).$$  
We write $\Phi^\eta$ for the switched gain graph $(\G,\phi^\eta)$.  Switching functions form a group $\fG^V$ under componentwise multiplication.  
Switching is an action of the group $\fG^V$ on the set $\fG^E$ of gain functions on the underlying graph.

A \emph{switching isomorphism} of gain graphs $\Phi$ and $\Phi'$ with the same gain group $\fG$ is an isomorphism of underlying graphs that preserves gains up to switching; that is, it is an isomorphism of $\Phi'$ with some switching of $\Phi$.

Consider a balanced edge set $S$.  Let $S_{v_iv_j}$ denote a path in $S$ from $v_i$ to $v_j$, if one exists; the gain $\phi(S_{v_iv_j})$ is independent of the particular path because $S$ is balanced.  
There is a switching function $\eta$ such that $\phi^\eta\big|_S\equiv\1$ \cite[Section I.5]{BG}; it is determined by any one value in each component of $S$ through the formula 
\begin{equation}\label{E:etatransfer}
\eta(v_j) = \phi(S_{v_jv_i}) \eta(v_i).  
\end{equation}
We call $\eta$ a \emph{switching function for $S$}.   Any two different switching functions for $S$, $\eta$ and $\eta'$, are connected by the relation
\begin{equation}\label{E:etarelation}
\eta' = \eta \cdot \alpha_W
\end{equation}
for constants $\alpha_W \in \fG$, one for each $W \in \pi(S)$.  (In fact, $\alpha_W = \eta(v_i)\inv \eta'(v_i)$ for any $v_i \in W$; this is easy to deduce from \eqref{E:etatransfer}.)  
Thus, as long as the endpoints of an edge $e_{ij}$ are in the same component of $S$, $\phi^\eta(e_{ij}) = \1$.  
(If $e_{ij}$ has endpoints in distinct components of $S$, then $\phi^\eta(e_{ij}) = \eta(v_i)\inv \phi(e_{ij}) \eta(v_j)$ can be anything, since $\eta(v_i)$ and $\eta(v_j)$ are independently choosable elements of $\fG$.)

The operation of deleting an edge or a set of edges is obvious.   The notation for $\Phi$ with $E\setminus S$ deleted, called the \emph{restriction} of $\Phi$ to $S$, is $\Phi|S=(V,S,\phi|_S)$.
The number of components of $S$ that are balanced is $b(\Phi|S)$ or briefly $b(S)$ (recall that this counts isolated vertices but not loose edges); 
$\pib(S)=\pib(\Phi|S)$ is the set $\{W\in\pi(S) : (S\ind W) \text{ is balanced}\};$ 
$V_0(S)$ is the set of vertices that belong to no balanced component of $S$; and $V_\textb(S)$ denotes the set of vertices of balanced components, $V_\textb(S) = V \setm V_0(S)$.  

Contraction is not so obvious.  We take the definition from \cite{BG}.  
First, we describe how to contract a balanced edge set $S$.  
We first switch by $\eta$, any switching function for $S$; then we identify each set $W\in\pi(S)$ to a single vertex and delete $S$.  The notation is $\Phi^{\eta}/S$.  This contraction depends on the choice of $\eta$, so $\Phi^{\eta}/S$ is well defined only up to switching.  (Soon, however, we shall see how to single out a preferred switching function.)  By this definition, contracting a balanced loop or loose edge is the same as deleting it (but that is not true for unbalanced loops).

For a general subset $S$ we first delete the vertex set $V_0(S)$, then contract the remaining part of $S$, which is the union of all balanced components of $S$, and delete any remaining edges of $S$.  
Edges not in $S$ that have one or more endpoints in $V_0(S)$ lose those endpoints but remain in the graph, thus becoming half or loose edges.  
So, the contraction has $V(\Phi/S)=\pib(S)$ and $E(\Phi/S) = E \setminus S$.

A balanced edge set $S$ is called \emph{closed} if any edge $ge_{ij}$ whose endpoints are joined by an open path $P \subseteq S$ with the same gain is itself in $S$.  (For a balanced loop $0e_{ii}$, the path $P$ has length 0 and the edge is necessarily in $S$.)  This is equivalent to saying $S$ equals its own closure; the closure of a balanced edge set $S$ is given by
\begin{equation}\label{E:bal-closure}
\clos(S) := S \cup \{ e\notin S : e \text{ is contained in a balanced circle } C \subseteq S \cup \{e\} \} .
\end{equation}
By \cite[Proposition I.3.1]{BG}, the closure of a balanced set is again balanced.  
The semilattice of all closed, balanced edge sets in $\Phi$ is written $\Latb \Phi$.

\subsection{Lattice-ordered gain groups}\label{orderedgains}

For the rest of this article we assume the gain group is lattice ordered.  
(It may be totally ordered; that case has some special features.)

We continue thinking of a balanced edge set $S$.  
The ordering singles out a particular switching function for $S$, the one for which the meet of its values on each set $W\in\pi(S)$ is the identity.  We call this the \emph{top switching function} and we write it $\eta_S$; it is what we use for switching throughout the rest of this article.  Its existence and uniqueness are proved in the following lemma.  

\begin{lem}\label{L:top}
The top switching function $\eta_S$ has the formula 
\begin{equation*}\label{E:etaS}
\eta_S(v) = \bigvee_{w\in W}  \phi(S_{vw}) ,
\end{equation*}
where $W\in\pi(S)$ is the part that contains $v$, and for its inverse 
\begin{equation*}\label{E:etaSinverse}
\eta_S(v)\inv = \bigwedge_{w\in W}  \phi(S_{wv}) .
\end{equation*}
\end{lem}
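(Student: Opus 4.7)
The plan is to take the stated formulas as definitions and then verify the two defining properties of $\eta_S$: that it switches $S$ to identity gains ($\phi^{\eta_S}\big|_S\equiv\1$), and that its meet on each $W\in\pi(S)$ equals $\1$. Uniqueness then follows immediately from Equation \eqref{E:etarelation}. Throughout, the key algebraic tools are the three identities that hold in any lattice-ordered group: $(a\vee b)c = ac\vee bc$, $c(a\wedge b) = ca\wedge cb$, and $(a\vee b)\inv = a\inv\wedge b\inv$.

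Set $\eta(v) := \bigvee_{w\in W}\phi(S_{vw})$, which is well defined because $S$ is balanced and so $\phi(S_{vw})$ is independent of the chosen path. To show $\eta$ is a switching function for $S$, take any edge $e_{ij}\in S$ with endpoints $v_i,v_j\in W$ and use left-distributivity of multiplication over joins to compute
\[
\eta(v_j) = \bigvee_{w\in W}\phi(S_{v_jv_i})\phi(S_{v_iw}) = \phi(S_{v_jv_i})\,\eta(v_i).
\]
Combined with $\phi(e_{ij})\phi(S_{v_jv_i})=\1$ (the gain of a closed walk in a balanced set), this gives $\phi^\eta(e_{ij}) = \eta(v_i)\inv\phi(e_{ij})\eta(v_j) = \1$. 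Hence $\eta$ is a valid switching function for $S$.

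For the meet condition, fix a reference vertex $v\in W$ and substitute $\eta(u) = \phi(S_{uv})\eta(v)$ from \eqref{E:etatransfer}. Right-distributivity yields $\bigwedge_{u\in W}\eta(u) = \bigl(\bigwedge_{u\in W}\phi(S_{uv})\bigr)\eta(v)$. Using $\phi(S_{uv})=\phi(S_{vu})\inv$ and $(\bigvee a)\inv=\bigwedge a\inv$, the inner meet equals $\bigl(\bigvee_u\phi(S_{vu})\bigr)\inv = \eta(v)\inv$, so the product collapses to $\1$ as required. The inverse formula is then just the algebraic inverse of the join: $\eta_S(v)\inv = \bigl(\bigvee_w\phi(S_{vw})\bigr)\inv = \bigwedge_w\phi(S_{wv})$.

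Uniqueness is the quickest part. By \eqref{E:etarelation}, any other switching function $\eta'$ for $S$ satisfies $\eta' = \eta\cdot\alpha_W$ on each $W\in\pi(S)$ for some $\alpha_W\in\fG$. Right-distributing the meet gives $\bigwedge_{w\in W}\eta'(w) = \bigl(\bigwedge_{w\in W}\eta(w)\bigr)\alpha_W = \alpha_W$, so imposing the meet-identity condition forces $\alpha_W=\1$ on every component. The only genuine obstacle is the bookkeeping of left versus right multiplication when $\fG$ is non-abelian, but once the three lattice-ordered group identities above are in hand everything collapses to routine computation.
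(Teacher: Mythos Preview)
Your proof is correct. The approach is dual to the paper's: the paper assumes the two defining properties of $\eta_S$ (the transfer identity \eqref{E:etatransfer} and the meet condition $\bigwedge_{w\in W}\eta_S(w)=\1$) and \emph{solves} for $\eta_S(v)$, obtaining the formula and uniqueness in one stroke; you instead take the formula as a definition and \emph{verify} the two properties, then establish uniqueness separately via \eqref{E:etarelation}. Both routes use the same lattice-ordered-group identities and the same transfer relation, so the mathematical content is identical. Your direction has the minor advantage of making existence completely explicit, whereas the paper's version leans on the prior remark that switching functions for $S$ exist and then shows any one satisfying the meet condition must be given by the stated formula.
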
  

\begin{proof}
Because $S$ is balanced, the gain of every path $S_{vw}$ is the same.  That is why the right side of \eqref{E:etaS} is well defined.

We use the identity $(\alpha \wedge \beta)\inv = \alpha\inv \vee \beta\inv$.

We know two properties of $\eta_S$.  As a switching function for $S$ it satisfies \eqref{E:etatransfer}.  As a top switching function it satisfies 
$
\bigwedge_{w \in W}  \eta_S(w) = \1
$
for every $W\in\pi(S)$.  Equation \eqref{E:etatransfer} lets us rewrite this as
$$
\bigwedge_{w \in W}   \phi(S_{wv}) \eta_S(v) = \1.
$$
Factoring out $\eta_S(v)$, 
$$
\eta_S(v) = \Big[  \bigwedge_{w \in W}   \phi(S_{wv})  \Big]\inv = \bigvee \phi(S_{vw}) .  
\qedhere
$$
\end{proof}

In view of the importance of the meet of switching-function values, we define 
$$
\eta(X) := \bigwedge_{v \in X} \eta(v)
$$
for $X \subseteq V$.

Now, to contract $S$ we first switch by $\eta_S$; then we identify each set $W\in\pi(S)$ to a single vertex and delete $S$.  The contraction $\Phi^{\eta_S}/S$, which we call the \emph{top contraction}, we usually write $\Phi/S$ for brevity.  
The contraction $\Phi/S$ is now a unique gain graph, because the gain-group ordering allows us to specify the switching function uniquely.

When the group is totally ordered, there is a \emph{top vertex} in every component of $S$, a vertex $t$ such that no path in $S$ that ends at $t$ has gain $< \1$; this is any vertex for which $\eta_S(t) = \1$.  Then the rule for defining $\eta_S$ is that its minimum value on the vertices of each component of $S$ is the identity. 
A top vertex may also happen to exist when $\fG$ is not totally ordered.  
If $t_i$ denotes a top vertex in the same component of $S$ as $v_i$, then the top switching function has the formula 
\begin{equation}\label{E:topvertexsw}
\eta_S(v_i) = \phi(S_{v_it_i}) .  
\end{equation}
The gain function $\phi$ switched by $\eta_S$ is given by the formula 
\begin{equation}\label{E:phiS}
 \phi^{\eta_S}(e_{ij}) =  \phi(S_{v_it_i})\inv \phi(e_{ij}) \phi(S_{v_jt_j}) = \phi(S_{t_iv_i}e_{ij}S_{v_jt_j}) .
\end{equation}
%

\subsection{Weights}\label{weights}

Suppose we have an abelian semigroup $\fW$ (written additively) and a group $\fG$.  We say $\fG$ \emph{acts on} $\fW$ if each $g \in \fG$ has a right action on $\fW$ which is a semigroup automorphism satisfying the usual identities, i.e., $g$ is a bijection $\fW\to\fW$ satisfying $(h+h')g = hg+h'g$, $0g = 0$, $(hg)g' = h(gg')$, and $h\1 = h$.

A \emph{weighted gain graph} $\Phih$ is a gain graph $\Phi$ together with a \emph{weight} function $h : V \to \fW$, where the gain group $\fG$ has a right action on the weight semigroup $\fW$.  We usually write $h_i := h(v_i)$.  The way $h$ transforms under switching is that 
$$
h^\eta_i = h^\eta(v_i) := h_i \eta(v_i)
$$ 
(as if $h_i$ were the gain of an edge oriented into $v_i$ from an extra vertex at which $\eta$ is the identity).  Thus, the switching group $\fG^V$ has a right action on the set $\fW^V$ of weight functions.  
Switching $\Phih$ means switching both $\Phi$ and $h$, i.e., $$\Phih^\eta := (\Phi^\eta, h^\eta).$$  

A \emph{switching isomorphism} of weighted gain graphs $\Phih$ and $(\Phi',h')$ with the same gain group $\fG$ is an isomorphism of underlying graphs that preserves gains and weights up to switching.  That is, it is an isomorphism of $(\Phi',h')$ with some switching of $\Phih$.

The restriction of $\Phih$ to an edge set $S$ is denoted by $\Phih|S := (\Phi|S,h)$.

The contraction rule is that, first, we always contract with top switching; and if $W\in\pib(S)$, then the weight function $h/S$ in the contraction $\Phih/S$ is given by 
\begin{equation}
h/S(W) := \sum_{v_i\in W} h_i^{\eta_S} = \sum_{v_i\in W} h(v_i)\eta_S(v_i) .
\label{E:wtcontract}
\end{equation}
If $R\subseteq S$ and $\pib(R)=\pib(S)$, then $h/R=h/S$.

If there is a top vertex $t_i$ in the component $S\ind W$ that contains $v_i$, then 
$
h_i^{\eta_S} = h_i \phi(S_{v_it_i})
$ 
and 
$
h/S(W) = \sum_{v_i \in W} h_i \phi(S_{v_it_i}) .
$

 We have occasion to contract the induction $\Phih\ind W := (\Phi\ind W,h|_W)$ of the entire weighted graph by the induction $S\ind W$ of an edge set, where $W\in\pib(S)$.
 
The next result states the fundamental properties of deletion and contraction of weighted gain graphs.

\begin{prop}\label{P:repeatedops}
In a weighted gain graph $\Phih$, let $S\subseteq E$ be the disjoint union of $Q$ and $R$.  Then 
\begin{gather*}
(\Phih/Q)/R = \Phih/S,  \\
(\Phih/Q)\setminus R = (\Phih\setminus R)/Q, \\ 
(\Phih\setminus Q)\setminus R = \Phih\setminus S.
\end{gather*}
\end{prop}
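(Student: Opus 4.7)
The plan is to verify the three identities in order of difficulty. The third, $(\Phih\setminus Q)\setminus R = \Phih\setminus S$, is immediate because deletion merely removes edges without altering vertices, gains, or weights, and $S = Q\cup R$. The second, $(\Phih/Q)\setminus R = (\Phih\setminus R)/Q$, follows from the observation that since $Q$ and $R$ are disjoint, the partition $\pi(Q)$, the subset $\pib(Q)$, the top switching function $\eta_Q$, and the contracted weights $h/Q$ are computed identically in $\Phih$ and in $\Phih\setminus R$. Both sides therefore yield the weighted gain graph on vertex set $\pib(Q)$ with edge set $E\setm S$, gains $\phi^{\eta_Q}$ restricted to $E\setm S$, and weights $h/Q$.

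The first identity $(\Phih/Q)/R = \Phih/S$ is the substantive one. I would break the verification into four matches: edge sets, vertex sets, gains, and weights. Edge sets agree because both constructions strip $Q\cup R$ from $E$. For vertex sets, each component of $S$ in $\Phi$ decomposes as a union of components of $Q$ linked by edges of $R$; I would produce a canonical bijection between $\pib(S)$ and $\pib_{\Phi/Q}(R)$, and check that a component of $S$ is balanced if and only if each of its $Q$-subcomponents is balanced and the resulting $R$-component in $\Phi/Q$ is balanced (both directions use Equation \myref{E:etatransfer} to relate gains of circles in $S$ to gains of circles in $\Phi/Q$).

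The crux is to identify the switching functions used by the two procedures. Let $\eta_R^{(1)}$ denote the top switching function for $R$ computed after the first contraction, and lift it to $\tilde\eta : V(\Phi)\to\fG$ by $\tilde\eta(v) := \eta_R^{(1)}(W)$ whenever $v \in W \in \pib(Q)$. I would show $\eta_Q\tilde\eta = \eta_S$ by invoking Lemma \myref{L:top}: first, the product is a switching function for $S$, because paths within a single $Q$-component are transferred correctly by $\eta_Q$ while $R$-edges between top-switched $Q$-components are transferred by $\tilde\eta$; and second, for every balanced component of $S$, the pointwise meet of $\eta_Q\tilde\eta$ equals $\1$. The latter uses that in a lattice-ordered group, left multiplication by a fixed element commutes with meet, so the inner meet over each $Q$-component is $\1$ by the defining property of $\eta_Q$, and the outer meet over $R$-components in $\Phi/Q$ is $\1$ by the defining property of $\eta_R^{(1)}$. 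Once the switching functions coincide, compositionality of switching, $\phi^{\eta\eta'} = (\phi^\eta)^{\eta'}$, yields matching gains on $E\setm S$, and the semigroup-action identities $(hg)g' = h(gg')$ and $(h+h')g = hg + h'g$ yield matching weights after summing first over $Q$-components and then over the $R$-joins.

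I expect the main obstacle to be this lattice-ordered-group meet computation in the verification that $\eta_Q\tilde\eta$ is the top switching function for $S$: keeping careful track of the lift from $V(\Phi/Q)$ back to $V(\Phi)$, and exploiting the compatibility of the order with group multiplication, is the one nontrivial step; the remaining verifications are bookkeeping enabled by the definitions in Sections \myref{orderedgains} and \myref{weights}.
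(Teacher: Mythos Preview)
Your proposal is correct and follows essentially the same route as the paper's proof: the heart of both arguments is the factorization $\eta_S = \eta_Q\cdot\tilde\eta$, where $\tilde\eta$ is the lift to $V$ of the top switching function for $R$ in $\Phi/Q$ (the paper calls this lift $\eta'_S$ and the unlifted function $\eta''_R$), verified by checking that the product is a switching function for $S$ whose meet over each $S$-component is $\1$.

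One organizational difference worth noting: the paper first reduces to the case where $S$ is balanced, by observing that both $\Phih/S$ and $(\Phih/Q)/R$ see only the balanced part $S{:}V_\textb(S)$ and that the half and loose edges agree. You instead go directly for a bijection $\pib(S)\leftrightarrow\pib_{\Phi/Q}(R)$ in full generality. That works, but your phrasing ``a component of $S$ is balanced if and only if each of its $Q$-subcomponents is balanced and the resulting $R$-component in $\Phi/Q$ is balanced'' is slightly loose: when an $S$-component contains an unbalanced $Q$-subcomponent, there is no single ``resulting $R$-component'' in $\Phi/Q$, since deleting $V_0(Q)$ may disconnect the remaining balanced $Q$-pieces and turn some $R$-edges into half edges. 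The bijection still holds (any $W''\in\pib_{\Phi/Q}(R)$ has no half edges, which forces all adjacent $Q$-components to be balanced and $\bigcup W''$ to be a full $S$-component), but the paper's preliminary reduction to balanced $S$ sidesteps this bookkeeping entirely. Either way the substantive step---the lattice-ordered meet computation you flag---is the same.
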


\begin{proof}
We may suppose $\Phi$ is connected.  The two latter formulas are obvious.  

The first one is not; indeed, in a purely technical sense it is false, since $V(\Phi/S) = \pib(\Phi|S)$ while $V((\Phi/Q)/R) = \pib(\Phi/Q|R)$; but it is correct if we identify $W \in \pib(\Phi|S)$ with $W''\in\pib(\Phi/Q|R)$ in the natural way:  $W$ corresponds to $W'' = \{ X\in\pib(\Phi|Q) : X\subseteq W \}$ and conversely $W''$ corresponds to $W = \bigcup W'' = \{ w\in V(\Phi) : w \in X \text{ for some } X \in W'' \}$.

In proving the first formula, the first step is to show that we can assume $S$ is balanced.  
It is a routine check to see that $\Phi/S$ and $\Phi/Q/R$ have the same half and loose edges; thus we may assume for the rest of the proof that $\Phi$ has no edges of those kinds.  

Since $V(\Phi/S) = \pib(S)$, we have
$$
\Phi/S = [ (\Phi\ind U)/(S\ind U) ] \ \text{ and } \ \Phi/Q/R = \big( (\Phi\ind U)/(Q\ind U) \big) / (R\ind U)  
$$
where $U := V_\textb(S)$.  
Since $S\ind U$ is balanced, both $Q\ind U$ and $R\ind U$ are also balanced. 
The weights of the contractions only appear on vertices of $\Phi/S$ so they depend only on vertices and edges in $U$; the same holds true for $\Phi/Q/R$.  It follows that
$$
\Phih/S = (\Phih\ind U)/(S\ind U) 
$$
and
$$
\Phih/Q/R = \big( (\Phih\ind U)/(Q\ind U) \big) / (R\ind U) .
$$
This proves that we may confine our attention to the balanced spanning subgraph $(U,S\ind U)$ in $\Phi\ind U$; thus, we may from now on assume $S$ is balanced.

Let $\eta'_S$ be the top switching function for $\Phi^{\eta_Q}|S$ and let $\eta''_R$ be that for $\Phi/Q|R$.  (Recall that $\Phi/Q$ means $\Phi^{\eta_Q}/Q$.)  
The key to the proof is the factorization identity
\begin{equation}\label{E:etacomp}
\eta_S(v) = \eta_Q(v) \eta'_S(v) ,
\end{equation}
which shows that the effect of $\eta_S$, which is to switch so $\phi|_S$ becomes $\1$, can be divided into two stages: first switching by $\eta_Q$ so that $\phi|_Q$ becomes $\1$, and then switching by $\eta'_S$, which is constant on components of $Q$.  

In proving \eqref{E:etacomp}, first we compare $\eta_Q$ and $\eta_S$.  Since they are two switching functions for $Q$, they are related by \eqref{E:etarelation}.  
Specifically, let $X \in \pi(Q)$ and $W \in \pi(S)$, with $X \subseteq W$; then $\eta_S(v) = \eta_Q(v) \alpha_X$ for $v \in X$.  Taking the meet over $X$, $\eta_S(X) = \eta_Q(X) \alpha_X = \1 \alpha_X$, so 
$$
\eta_S(v) = \eta_Q(v) \eta_S(X) 
$$
for $v \in X$.  
Next we show that $\eta_S(X) = \eta'_S(v)$.  Define $\bar\eta := \eta_Q \eta'_S.$  It is easy to verify that $\bar\eta$ satisfies \eqref{E:etatransfer} so it is a switching function for $S$.  Taking the meet over all $v \in W$, and taking note that $\eta'_S(v) = \eta'_S(X)$ for $v \in X$ because $\eta'_S$ is constant on $X$, we find that
\begin{align*}
\bar\eta(W) &= \bigwedge_{v\in W} \eta_Q(v) \eta'_S(v) 
 = \bigwedge_{X \in W''} \Big[ \bigwedge_{v \in X} \eta_Q(v) \Big] \eta'_S(X) \\
 &= \bigwedge_{X \in W''} \1 \eta'_S(X) 
 = \bigwedge_{v\in W} \eta'_S(v) = \1 .
\end{align*}
Thus $\bar\eta$ is a top switching function for $S$ and, as there is only one, it equals $\eta_S$.  This proves \eqref{E:etacomp}.

From \eqref{E:etacomp} it follows that $\phi^{\eta_S} = (\phi^{\eta_Q})^{\eta'_S}$ and (by the group action on weights) $h^{\eta_S}(v) = \left( h^{\eta_Q} \right)^{\eta'_S}(v)$, thus establishing that
 \begin{equation} \label{E:tvcompose}
 \Phih^{\eta_S} = \big( \Phih^{\eta_Q} \big) ^{\eta'_S} .
 \end{equation}

Now we can analyze the process of contraction.  
We know from \cite[Theorem I.4.7 and the proof of Theorem I.5.4]{BG} that $\Phi/Q/R \sim \Phi/S$ (where $\Phi_1 \sim \Phi_2$ means that each of them is a switching of the other).  
But the switching equivalence is really equality because the switching functions employed are related by Equation \eqref{E:etacomp}.  Thus, $\Phi/S = \Phi/Q/R$.  

The last step is to prove that weights contract properly.  The key here is that contraction by $Q$ commutes with two-stage switching, i.e., 
\begin{equation} \label{E:tvcontract}
( h^{\eta_Q}/Q ) ^{\eta''_R} = ( h^{\eta_Q} ) ^{\eta'_S} / Q .
\end{equation}
Observe that $\eta'_S$ is constant on each $X \in \pi(Q)$ and its common value is $\eta''_R(X)$ (the value of $\eta''_R$ on $X$ as a vertex in $\Phi/Q$).  
Expanding both sides according to the definitions of switching and contraction, this is equivalent to 
$$
\Big( \sum_{w \in X} h^{\eta_Q}(w) \Big) \eta''_R(X) = \sum_{w \in X} h^{\eta_Q}(w)\eta'_S(w) ,
$$
which is true because $\eta'_S(w) = \eta''_R(X)$.  That concludes the proof of \eqref{E:tvcontract}.

Equations \eqref{E:tvcompose} and \eqref{E:tvcontract} imply the double contraction formula through the sequence of transformations 
\begin{align*}
 \Phih/Q/R &= \big( \Phih^{\eta_Q}/Q \big) ^{\eta''_R} / R
 &\quad\text{ by definition} \\
	&= \big( \Phih^{\eta_Q \eta'_S}/Q \big) /R 
 &\quad\text{ by \eqref{E:tvcontract}} \\
	&=  \big( \Phih^{\eta_S}/Q \big) /R            
 &\quad\text{ by \eqref{E:tvcompose}} \\
	&= \Phih^{\eta_S} / S
 \end{align*}
in the loose sense previously defined in terms of the correspondence $W \leftrightarrow W''$.
 \end{proof}

\myexam{Weighted integral gain graphs; linearly ordered group weights}\label{X:wigg}

Our original example \cite{SOA} was that of \emph{weighted integral gain graphs}, where the gain group is the additive group of integers and the weight semigroup is the integers with the operation of maximization.  In other words, $\fG = (\bbZ, +)$ and $\fW = (\bbZ,\max)$.  
The gains act on the weights by translation, i.e., addition.

A similar kind of example exists for every linearly ordered gain group, with $\fW = (\fG,\max)$ or $(\fG,\min)$.  
\end{exam}

\myexam{Weights and gains in an integer lattice}\label{X:Zd}

A more general example, of which Examples \ref{X:Qorder2i} and \ref{X:Qorder2} are a specimen, has the gain group and the weight semigroup both equal to $\bbZ^d$, the $d$-dimensional integer lattice in $\bbR^d$.  The gain group is $(\bbZ^d,+)$ with the lattice operations $\vee$ and $\wedge$, meaning componentwise $\max$ and $\min$.  The weight semigroup is $(\bbZ^d,\vee)$.  The action of the group on the semigroup is by translation, i.e., $\bx\bw := \bx+\bw$ for $\bx\in(\bbZ^d,+)$ and $\bw\in(\bbZ^d,\vee)$.  
\end{exam}

\myexam{Semilattice weights}\label{X:setwts}

To further generalize Example \ref{X:wigg}, let $\fW$ be a semilattice with a $\fG$-action; the semigroup operation is the semilattice operation.  
In an important example of this type there is a set $\fC$ on which there is a right action of the gain group;
the weights are subsets of $\fC$, i.e., $\fW \subseteq \cP(\fC)$; and the semigroup operation is set intersection---so $\fW$ must be closed under intersection.  (In Section \ref{coloration} $\fC$ will be a color set and the weight $h_i \subseteq \fC$ will be treated as the list of colors possible for vertex $v_i$.)

Generalizing minimization, let $\fC$ be a partially ordered set and let the weights be order ideals in $\fC$.  If $\fC$ is a meet semilattice, one may restrict the weights to be principal ideals.  
There are also the order duals of these examples.

When, on the other hand, $\fC=\fG$ with the right translation action, one may take $\fW = \cP(\fG)$, for instance, or the class of principal dual order ideals (since $\fG$ is a lattice), or the class of sets that have a lower bound (that is, all subsets of principal dual ideals).  
The dual of this last, with gain group $\bbZ$ was the prototype of weighted gain graphs, as we explain next.  
\end{exam}

\subsection{Rooted integral gain graphs are weighted}\label{rigg}

Since the inspiration for this article was a reconsideration of \cite{SOA}, abetted by \cite{NW}, we wish to explain to readers of \cite{SOA} how its rooted integral gain graphs are a notational variant of certain weighted integral gain graphs.

A \emph{rooted integral gain graph} is an integral gain graph $\Psi$ with a root vertex $v_0$ adjacent to all other vertices in such a way that the gains of edges $e_{0i}$ form an interval $(-\infty,h_i]$ in the gain group $\bbZ$, the infinite cyclic group.  In \cite{SOA} we studied the chromatic function of a rooted integral gain graph, which counts proper colorations in an interval $(-\infty,m]$, as explained in Example \ref{X:intervalcol}.

An equivalent presentation replaces the root by an integral weight $h_i$ on each vertex of $\Phi := \Psi \setminus v_0$.  Switching of $\Psi$ as in \cite{SOA}, when transferred to $\Phi$, implies the rule $h^\eta = h+\eta$ for switching the weights.  (We write a plus sign here because the action of $\fG=\bbZ$ on $\fW=\bbZ$ in this example is by addition.)  That is the rule adopted and generalized in Section \ref{weights}.

Similarly, the standard gain-graphic contraction on $\Psi$, when reinterpreted in terms of integral weights $h_i$ on $\Phi$, assigns to a set $W \in V(\Phi/S)$ a weight equal to the maximum weight of a vertex in $W$ after switching; thus the weight set is $\bbZ$ with the operation of maximization.  

Trying to generalize this equivalence to more arbitrary root-edge gain sets, and reading about the weight operation in \cite{NW}, we realized that the weight operation is a semigroup operation, different from the gain-group operation, and the weight semigroup is separate from the gain group; and so we settled upon the generalization in this article.

\section{A Tutte-invariant polynomial}\label{invariant}

A function $f$ defined on weighted gain graphs (with fixed gain group and weight semigroup) is a \emph{Tutte invariant} if it satisfies the four conditions from the introduction:
\begin{enumerate}

\item[(Ti)] (Additivity)  For every link $e$, 
$$
 f(\Phi,h) = f(\Phi\setminus e,h) + f(\Phi/e,h/e) ,
$$
where $h/e$ denotes the contracted weight function.

\item[(Tii)] (Multiplicativity)  The value of $f$ on $\Phih$ is the product of its values on the components of $\Phih$.

\item[(Tiii)] (Isomorphism Invariance)  If $\Phih$ and $(\Phi',h')$ are isomorphic, then $f(\Phi,h) = f(\Phi',h')$.  

\item[(Tiv)] (Unitarity)  $f(\eset) = 1$.

\end{enumerate}

We present here an algebraic Tutte invariant.  We need variables $u_k$ for all $k\in\fW$; the collection of all $u_k$'s is denoted by $\bu$.  The \emph{total dichromatic polynomial} of a weighted gain graph is
\begin{equation}\label{E:dichromaticpoly}
 Q_\Phih(\bu,v,z) :=
 \sum_{S\subseteq E} v^{|S|-n+b(S)} z^{c(S)-b(S)} 
 \prod_{W\in\pib(S)} u_{h/S(W)} ,
\end{equation}
where 
$$h/S(W) := \sum_{w \in W} h^{\eta_S}(w).$$  

The important values of $z$ are 0 and 1.  When $z=0$ we have a sum only over balanced sets $S$; this is the \emph{balanced dichromatic polynomial} $Q^\textb_\Phih(\bu,v)$.  When $z=1$ we sum over all edge sets $S$; this gives the \emph{(ordinary) dichromatic polynomial} $Q_\Phih(\bu,v)$.  
These specializations refine the balanced and ordinary dichromatic polynomials of a gain graph or biased graph \cite[Section III.3]{BG}, which are obtained by setting all $u_k=u$ and $z=0$ or $z=1$.  Thus the total polynomial with all $u_k=u$ fills a gap in the theory of \cite[Part III]{BG} by unifying the balanced and ordinary polynomials.  The advantage of the total polynomial is in having one Theorem \ref{T:dandc} instead of two, separately for the balanced and ordinary polynomials.

For a graph with no edges,
\begin{equation}\label{E:initial}
Q_{((V,\eset),h)} = \prod_{v_i \in V} u_{h_i} .
\end{equation}
If $e$ is a balanced loop or a loose edge,
\begin{equation}\label{E:loop}
Q_\Phih = (v+1) Q_{(\Phi\setminus e,h)} .
\end{equation}

\begin{thm}\label{T:dandc}
The total dichromatic polynomial $Q_\Phih(\bu,v,z)$ is a Tutte invariant of weight\-ed gain graphs. 
\end{thm}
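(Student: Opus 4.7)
The plan is to verify each of the four conditions (Ti)--(Tiv) in turn; all but (Ti) are essentially bookkeeping. Unitarity (Tiv) holds because the empty graph has $n=0$ and a single subset $S=\eset$ with $b(S)=c(S)=0$ and $\pib(S)=\eset$, giving $Q_{\eset}=v^0 z^0 = 1$. Isomorphism invariance (Tiii) is transparent because every ingredient of \eqref{E:dichromaticpoly}---cardinality, order, numbers of components and balanced components, and the contracted weight $h/S(W)$---is preserved by a (switching) isomorphism. Multiplicativity (Tii) follows because edge subsets $S$ of a disjoint union split uniquely as $S = S_1 \cupdot S_2$ and each of $|S|$, $n$, $b(S)$, $c(S)$, $\pib(S)$ is additive across the two factors, so \eqref{E:dichromaticpoly} factors into the corresponding product.

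For the deletion-contraction identity (Ti), fix a link $e$ with endpoints $v_i,v_j$ and partition the power set $2^E$ according to whether $S\ni e$. The subsets $S\not\ni e$ are precisely the subsets of $E(\Phi\setminus e)$, and on them every quantity appearing in \eqref{E:dichromaticpoly} is unchanged whether computed in $\Phih$ or $\Phih\setminus e$; hence that portion of the sum is $Q_{\Phih\setminus e}(\bu,v,z)$. The remaining task is to show that the partial sum over $S\ni e$, reindexed by $S=T\cup\{e\}$ with $T\subseteq E(\Phi/e)=E\setminus\{e\}$, equals $Q_{\Phih/e}(\bu,v,z)$ term by term, i.e., that the summand in \eqref{E:dichromaticpoly} indexed by $T\cup\{e\}$ in $\Phih$ matches the summand indexed by $T$ in $\Phih/e$.

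To do so, I would compare each factor separately. Since $e$ is a link, contracting it lowers the order by one, so $n(\Phi)=n(\Phi/e)+1$ and $|T\cup\{e\}|-n(\Phi)=|T|-n(\Phi/e)$. The natural identification that fuses $v_i,v_j$ yields a bijection between the components of $T\cup\{e\}$ in $\Phi$ and those of $T$ in $\Phi/e$, and it preserves balance: after switching by $\eta_{\{e\}}$ so that $\phi(e)=\1$, the claim reduces to the fact that inserting or removing an identity-gain edge within a single component does not affect balance. Thus $c$ and $b$ and the partitions $\pib$ agree on the two sides, so the exponents of $v$ and $z$ match. The main obstacle---and precisely the reason Proposition~\myref{P:repeatedops} was proved---is matching the weight products. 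But Proposition~\myref{P:repeatedops} gives the double-contraction identity $\Phih/(T\cup\{e\})=(\Phih/e)/T$ under the identification $W\leftrightarrow W''$ between corresponding balanced parts; in particular $h/(T\cup\{e\})(W)=(h/e)/T(W'')$, so the $u$-products coincide. This completes the verification of (Ti), and hence of the theorem.
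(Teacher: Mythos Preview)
Your proof is correct and follows essentially the same route as the paper's: split the sum over $S$ according to $e\in S$ or not, match the exponents of $v$ and $z$ via the obvious bijection of components (the paper cites \cite[Lemma I.4.3]{BG} for balance preservation where you argue directly via switching $e$ to identity gain), and invoke Proposition~\ref{P:repeatedops} to identify $h/S(W)$ with $(h/e)/R(W'')$. The only difference is that you spell out (Tii)--(Tiv) in slightly more detail than the paper's ``clear''.
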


\begin{proof}
Unitarity, multiplicativity, and isomorphism invariance are clear.  
For additivity we follow the usual proof method, dividing up the terms of the defining sum into two parts: those sets $S$ that do not contain the link $e$ and those sets that do contain $e$.  The sum of the former terms equals $Q_{\Phih\setm e}(\bu,v,z)$ and the sum of the latter, we shall see, equals $Q_{\Phih/e}(\bu,v,z)$. 

A set $S\ni e$ contracts to a set $R=S\setminus e$ in $\Phi/e$ whose balanced components correspond to those of $S$.  That is, if $S_0$ is a balanced component of $S$, then $S_0$ (if $e \notin S_0$) or $S_0/e$ (if $e \in S_0$) is a balanced component of $R$, and vice versa.  (This follows from \cite[Lemma I.4.3]{BG}.)  So $b(\Phi/e|R)=b(\Phi|S)$.  
Since $\Phi/e$ has order $n-1$, the term of $S$ in $Q_\Phih$ and that of $R$ in $Q_{\Phih/e}$ are the same except for the factors $u_{h/S(W)}$ in the former and $u_{h/R(W'')}$ in the latter, where $W''$ is the member of $\pib(\Phi/e|R)$ that corresponds to $W$.  We want to show that these factors are equal, i.e., that $h/S(W) = h/R(W'')$.  But the former is $h/S$ and the latter is $(h/e)/R$, which we know by Proposition \ref{P:repeatedops} to be equal.

It follows that $Q$ satisfies additivity, so is a Tutte invariant.
\end{proof}

The total dichromatic polynomial does not, in general, behave predictably under switching.  One can see that by comparing the switched polynomial $Q_{\Phih^\eta}$ to the unswitched one, $Q_\Phih$.  We omit the calculation.

\begin{exam}[Example \ref{X:Qorder2i}, continued]\label{X:Qorder2}
Now we can derive the polynomial in Example \ref{X:Qorder2i}.  

First we explain the gains and weights fully: this example is a case of Example \ref{X:Zd}, where the gains add and the weights maximize.

An edge subset with at most one edge is balanced; $b(\eset)=2$, $b(\{e_i\})=1$ for any edge $e_i$.  A subset with two or three edges is unbalanced.  

We calculate the total dichromatic polynomial step by step:
\begin{equation*}\label{E:Qorder2}
\begin{aligned}
 Q_\Phih(\bu,v,z) :=\
 &\sum_{S\subseteq E} v^{|S|-n+b(S)} z^{c(S)-b(S)} 
 \prod_{W\in\pib(S)} u_{h/S(W)} \\
 =\ & v^{0-2+2} z^{2-2} u_{h/\eset(\{v_1\})} u_{h/\eset(\{v_2\})} 
     + \sum_{i=1}^3 v^{1-2+1} z^{1-1} u_{h/e_i(V)} \\
     &+ \sum_{j=1}^3 v^{2-2+0} z^{1-0} u_{h/(E\setm e_j)(V)} 
     + v^{3-2+0} z^{1-0} u_{h/E(V)} \\
 =\ & u_{(2,0)}u_{(-1,3)} + [u_{(2,3)} + u_{(4,3)} + u_{(2,3)}] + z [3 + v] .
\end{aligned}
\end{equation*}

Explanations:  $h/\eset$ is simply $h$; there is no contraction.  
The top switching function $\eta_{e_i}$ for edge $e_i=\bg_i(v_1,v_2)$ must satisfy $-\eta_{e_i}(v_1) + \bg_i + \eta_{e_i}(v_2) = (0,0)$ (because it is a switching function for $B=\{e_i\}$) and $\eta_{e_i}(v_1) \wedge \eta_{e_i}(v_2) = (0,0)$, the identity in the gain group.  That is, $[\eta_{e_i}(v_2)+\bg_i] \wedge \eta_{e_i}(v_2) = (0,0).$  
It follows that $\eta_{e_i}(v_1)=\bg_i^+$ and $\eta_{e_i}(v_2)=\bg_i^-$ (recall the positive and negative parts of a vector from the beginning of Section \ref{defs}).
The values of $\eta_{e_i}(v_j)$ are given in Table \ref{Tb:Qorder2}.  
The weight of the single vertex $V$ of the contraction by $e_i$, from Equation \ref{E:wtcontract}, is 
\begin{align*}
h/e_i(V) &= h^{\eta_{e_i}}(v_1) \vee h^{\eta_{e_i}}(v_2) = h(v_1)\eta_{e_i}(v_1) \vee h(v_2)\eta_{e_i}(v_2)] \\
&= [h(v_1)+\eta_{e_i}(v_1)]\vee[h(v_2)+\eta_{e_i}(v_2)]
\intertext{(the switching values, which belong to the gain group, act on weights by translation)}
&= [(2,0)+\eta_{e_i}(v_1)]\vee[(-1,3)+\eta_{e_i}(v_2)] .
\end{align*}
The values, obtained by componentwise maximization ($\vee$), are in Table \ref{Tb:Qorder2}.  
Contracting $\Phi$ by two or three edges, thus by an unbalanced edge set $S$, gives a graph with no vertices; hence there are no weights associated to the term of $S$, so $\prod_{W\in\pib(S)} u_{h/S(W)} = 1$.
\begin{table}[ht]
\begin{tabular}{|c||c|c|c|c|}
\hline
$B=$		&$\eset$	&$\{e_1\}$&$\{e_2\}$&$\{e_3\}$
\\
$W=$	&$\{v_1\},\{v_2\}$	&$V$	&$V$	&$V$
\\\hline\\[-13pt]
$\eta_B(v_1)$	&$(0,0)$	&$(0,0)$	&$(2,0)$	&$(0,2)$
\\
$\eta_B(v_2)$	&$(0,0)$	&$(0,0)$	&$(0,0)$	&$(1,0)$
\\\hline\\[-13pt]
$h^{\eta_B}(v_1)$	&$(2,0)$	&$(2,0)$	&$(4,0)$	&$(2,2)$
\\
$h^{\eta_B}(v_2)$	&$(-1,3)$	&$(-1,3)$	&$(-1,3)$	&$(0,3)$
\\\hline\\[-13pt]
Wt.	&$(2,0),(-1,3)$	&$(2,3)$	&$(4,3)$	&$(2,3)$
\\\hline
\end{tabular}
\medskip
\caption{The top switching function $\eta_B$, the switched weights $h^{\eta_B}(v_i)$, and the contracted weights (``Wt.'') for Example \ref{X:Qorder2}.}
\label{Tb:Qorder2}
\end{table}
\end{exam}

 
\section{Forest expansion}\label{tree}

We turn to an expression for the balanced dichromatic polynomial, $Q_\Phih(\bu,v,0)$, that depends on a linear ordering of the edge set.  We fix one such ordering $O$ and in terms of it we define a spanning-forest expansion similar to the Tutte polynomial of a matroid.  (We did not find an expansion in terms of spanning trees.)  The details are in Section \ref{treeexpansion}, after some preliminary work with independent sets in semimatroids and gain graphs.

\subsection{Activities in semimatroids}\label{semi}

A semimatroid is a generalization of a matroid that extends properties like rank and closure of the family of balanced edge sets in a gain graph.  A theory was created by Wachs and Walker in \cite{WW} in terms of the ``geometric semilattice'' of closed sets and later Ardila (in \cite{ArdilaPhD, ArdilaTPA}, and especially \cite{ArdilaSemi}) developed a comprehensive treatment in terms of subsets of a set, in analogy to matroid theory.  Our treatment of semimatroids was developed without knowing Ardila's.  The two necessarily overlap as they concern the same objects, but the overlap is partial; so in order to be self-contained, we present our treatment as we need it for this paper.  We begin with general semimatroids and then specialize to those of gain graphs.

Just as with matroids, there are many equivalent ways to define a semimatroid.  
We define a semimatroid in terms of a matroid $M_0$ with ground set $E_0$ and a basepoint $e_0$ that is not a loop.  A subset of $E := E_0 \setm e_0$ whose closure in $M_0$ does not contain $e_0$ is called \emph{balanced}; the family of balanced sets is denoted by $\cPb(M)$.  The \emph{semimatroid} $M$ associated with $(M_0,e_0)$ is the family of all balanced subsets of $E$ (not all subsets, unless $E$ is balanced) with closure operator, rank function, closed or independent sets, circuits, and so forth the same as those of $M_0$ but restricted to balanced sets.      
The ground set of $M$ is $E$.  

For instance, the independent sets of $M$ are the ones of $M_0$ whose closures do not contain $e_0$.  The closed sets of $M$ are those of $M_0$ that do not contain $e_0$.  The rank $\rk_M(S)$ of a subset $S \subseteq E$ is defined only if $S$ is balanced; then it equals $\rk_{M_0}(S)$.

(A matroid, by contrast, is defined on the family of all subsets of its ground set.  Matroids are semimatroids, but in general semimatroids are not matroids.  Specifically, if $e_0$ is a coloop in $M_0$, then $M$ is the matroid $M_0 \setm e_0$.  Otherwise, $E$ is not balanced; then $M$ is technically not a matroid because its whole ground set $E$ is not in $\cPb(M)$.)

The restriction of $M_0$ or $M$ to a subset $S$ of the ground set is denoted by $M_0|S$ or $M|S$, respectively.  Thus $M|S$, if $S \subseteq E$, is the family of balanced sets of $M|(S \cup e_0)$ with the rank function, balanced independent sets, et al., restricted to $S$.

A fundamental fact is that if $S$ is balanced, the closure $\clos_0 S$ (in $M_0$) is balanced.  Consequently, the closure $\clos S$ in $M$ equals $\clos_0 S$.   Also, any circuit in $\clos S$ is balanced.  A maximal balanced independent set, that is, a maximal independent set of $M$, is called a \emph{semibasis}.  It lacks one element to be a basis of $M_0$; thus, $\rk M = \rk M_0 - 1$.

We develop some facts about independent sets, activities, and broken circuits in a semimatroid.  Some of them are already known for matroids.  We could not find an explicit source for exactly these results, but \cite{ELV} and \cite[Section 2]{KRS} have theorems along similar lines.  A reference for the fundamentals of activities in matroids is any of \cite{Bjorner, DCG, BOTutte}.

First, some basic definitions.  Let $F$ be independent in $M_0$.  
For a point $e \in (\clos_0 F) \setm F$, there is a unique circuit contained in $F\cup e$; it is called the \emph{fundamental circuit} of $e$ with respect to $F$ and denoted by $C_F(e)$.  It is balanced if (but not only if) $F$ is balanced.  
For a point $f \in F$, we call $\clos_0(F) \setm \clos_0(F \setm f)$ the \emph{fundamental relative cocircuit} of $f$ with respect to $F$, written $D_F(f)$.  
(By a \emph{relative cocircuit} we mean a cocircuit in $M_0|\clos_0 (F)$; it need not be a cocircuit in $M$.)  
If $F$ is balanced, the closures are in $M$ so $\clos_0$ can be replaced by $\clos$.

We fix a linear ordering $O$ of $E$ and extend it to $E_0$ arbitrarily.  
Consider an independent set $F$ of the semimatroid $M$ (that is, a balanced independent set of $M_0$).  
We say that a point $e \in E$ is \emph{externally active} (in $M$) with respect to $F$ if $e \notin F$ and $e$ is the largest point in $C_F(e)$ (so only a point in $(\clos F) \setm F$ can be externally active).  
A point $e$ is \emph{internally active} (in $M$) with respect to $F$ if it is in $F$ and it is the largest point in $D_F(e)$.  
A point that is not active is \emph{internally inactive} if it belongs to $F$ and \emph{externally inactive} if it belongs to $(\clos F) \setm F$.  The sets of internally or externally active or inactive points with respect to $F$ are denoted by $\IA(F)$, $\EA(F)$, $\II(F)$, $\EI(F)$.  
The number of externally active points is $\epsilon(F)$.  The number of internally active points is $\iota(F)$.  

The definitions for $M_0$ are the same, except for the omission of the word ``balanced'' and replacement of $\clos$ by $\clos_0$.  Thus we have $\IA_0(F)$, et al.; but when $F$ is balanced, these are the same as $\IA(F)$.

A \emph{broken (balanced) circuit} is a (balanced) circuit with its largest element removed.  (A broken circuit $C\setm e$ is balanced if and only if its circuit $C$ is balanced, because $\clos_0(C\setm e) = \clos_0 C$; thus, a balanced broken circuit is the same as a broken balanced circuit.)
 
\begin{lem}\label{L:bbcmatroid}
Let $F$ be independent in the semimatroid $M$.  Then $\II(F)$ contains every broken balanced circuit in $F$.
\end{lem}

\begin{proof}
If $D$ is a broken balanced circuit in $F$, there is a point $e \in (\clos F) \setm F$ which is maximal in its fundamental circuit $C_F(e) = D \cup \{e\}$.  Any $f \in D$ is internally inactive because $e >_O f$, both $e,f \in D_F(f)$, and $e,f \in (\clos F) \setm \clos(F\setm f)$.  Thus, $D \subseteq \II(F)$.
\end{proof}

\begin{lem}\label{L:iicompare}
If $F'$ is independent in the matroid $M_0$ and $F \subseteq F'$, then $\II_0(F) \subseteq \II_0(F')$.
\end{lem}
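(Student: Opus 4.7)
My plan is to show that any witness to the internal inactivity of $f$ with respect to $F$ also serves as a witness to its internal inactivity with respect to $F'$, by invoking the standard matroid duality between fundamental circuits and fundamental relative cocircuits. Concretely, starting from $f \in \II_0(F)$, I would pick an element $e >_O f$ with $e \in D_F(f) = \clos_0(F) \setm \clos_0(F \setm f)$. I would first observe that $e \notin F$: since $e \neq f$, if $e$ lay in $F$ then $e \in F \setm f \subseteq \clos_0(F \setm f)$, contradicting the choice of $e$.

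The key step is then to transfer the fundamental circuit from $F$ to $F'$. Because $e \in \clos_0(F) \setm F$, the fundamental circuit $C_F(e) \subseteq F \cup e$ is defined, and the duality $e \in D_F(f) \iff f \in C_F(e)$ (which is valid in the matroid $M_0|\clos_0(F)$, and therefore applies even though $F$ is not a basis of $M_0$) gives $f \in C_F(e)$. Now $C_F(e) \subseteq F \cup e \subseteq F' \cup e$; if $e$ belonged to $F'$, then $C_F(e)$ would be a circuit inside the independent set $F'$, which is impossible. Hence $e \notin F'$.

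Thus $C_F(e)$ is a circuit in $F' \cup e$ with $e \notin F'$, so by uniqueness of the fundamental circuit $C_{F'}(e) = C_F(e)$, and in particular $f \in C_{F'}(e)$. Applying the duality in reverse then yields $e \in D_{F'}(f)$, and since $e >_O f$ this exhibits $f$ as internally inactive in $F'$, giving $f \in \II_0(F')$ as desired. I do not foresee a real obstacle; the only subtlety to keep straight is that the circuit--cocircuit incidence be invoked inside the right ambient matroid so that the non-basis case is handled correctly.
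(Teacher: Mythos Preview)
Your proposal is correct and follows the same line as the paper's proof: pick a witness $e >_O f$ in $D_F(f)$ and show $e \in D_{F'}(f)$. The paper simply asserts $e \in \clos_0(F') \setm \clos_0(F'\setm f)$ in one line, whereas you spell out the justification via the fundamental-circuit/cocircuit duality (showing $e \notin F'$ and $C_{F'}(e)=C_F(e)\ni f$); this is exactly the argument implicit in the paper's claim.
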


\begin{proof}
Let $f \in \II_0(F)$.  Thus, there exists $e >f$ in $\clos_0(F) \setm \clos_0(F\setm f)$.  
The fundamental circuit of $e$ with respect to $F$ is contained in $F \cup e$ but not in $F\cup e \setm f$; therefore it is not contained in $F' \cup e \setm f$, so $e \notin \clos_0(F'\setm f)$.  
Since $e \in \clos_0(F') \setm \clos_0(F'\setm f)$, $f$ is internally inactive in $F'$.
\end{proof}

Each point set $S \subseteq E_0$ has a \emph{minimal basis} $F(S)$, which is the basis of the restriction $M_0|S$ that is lexicographically first according to $O$; it is the one obtained by the greedy algorithm applied to $S$.  It is balanced if and only if $S$ is balanced, because $\clos _0(S)=\clos _0(F(S))$.  
The next lemma says that the inverse of the mapping $S \mapsto F(S)$ partitions the power set of $E_0$ into intervals $[F, F\cup\EA_0(F)]$, one for each independent set $F$, and either all sets in the interval are balanced or all are unbalanced.  (This partition is an analog for semimatroids of one due to Crapo for matroids \cite{CrapoTP}, also in Bj\"orner \cite[Prop.\ 7.3.6]{Bjorner}.)

\begin{lem}\label{L:basismatroid}
Let $F$ be independent in $M_0$ and let $S \subseteq E_0$.  For the minimal basis of $S$ to be $F$, it is necessary and sufficient that $F \subseteq S \subseteq F \cup \EA_0(F)$.  Further, $F \cup \EA_0(F)$ is balanced if and only if $F$ is balanced.
\end{lem}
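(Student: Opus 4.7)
The plan is to handle the first statement with matched ``exchange'' and ``greedy'' arguments and to derive the balance claim from the elementary inclusion $\EA_0(F)\subseteq\clos_0(F)$.

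For the ($\Rightarrow$) direction of the first statement, I assume $F(S)=F$ and show $F\subseteq S\subseteq F\cup\EA_0(F)$. The containment $F\subseteq S$ is immediate. Given $e\in S\setm F$, the fact that $F$ spans $S$ in $M_0$ produces the fundamental circuit $C_F(e)\subseteq F\cup e\subseteq S$. If some $f\in C_F(e)\cap F$ satisfied $f>_O e$, the standard exchange (using that $C_F(e)$ is the unique circuit in $F\cup e$) would make $(F\setm f)\cup e$ a basis of $M_0|S$; this basis is lexicographically earlier than $F$, contradicting $F=F(S)$. Hence $e=\max_O C_F(e)$, i.e., $e\in\EA_0(F)$.

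For the ($\Leftarrow$) direction, I assume $F\subseteq S\subseteq F\cup\EA_0(F)$. The inclusion $\EA_0(F)\subseteq\clos_0(F)$ makes $F$ span $S$, so $F$ is a basis of $M_0|S$. To show $F=F(S)$, I let $e$ be the smallest element of the symmetric difference $F\triangle F(S)$ and derive a contradiction in each case. If $e\in F\setm F(S)$, the greedy construction forces $e\in\clos_0\bigl(F(S)\cap\{g\ind g<_O e\}\bigr)$; but by minimality of $e$, this intersection equals $F\cap\{g\ind g<_O e\}$, producing a circuit entirely in $F$, contradicting independence. If $e\in F(S)\setm F$, then $e\in\EA_0(F)$, so $C_F(e)\setm e\subseteq F\cap\{g\ind g<_O e\}=F(S)\cap\{g\ind g<_O e\}$, which makes $e$ dependent on the previously chosen elements of $F(S)$, contradicting the greedy admission of $e$ into $F(S)$.

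The second statement is then immediate. Since $\EA_0(F)\subseteq\clos_0(F)$, we have $\clos_0(F\cup\EA_0(F))=\clos_0(F)$. A subset is balanced precisely when its $M_0$-closure omits $e_0$, so $F\cup\EA_0(F)$ is balanced iff $F$ is. The main obstacle is the minimum-symmetric-difference case analysis in the sufficiency argument: one must carefully match the greedy execution on $S$ against fundamental circuits of externally active elements, and exploit the fact that the ``elements below $e$'' in $F$ and in $F(S)$ coincide by the choice of $e$.
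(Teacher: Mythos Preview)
Your argument is correct. The balance clause is handled identically in the paper: from $\EA_0(F)\subseteq\clos_0 F$ one gets $\clos_0(F\cup\EA_0(F))=\clos_0 F$, and balance is a property of the closure.

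For the first statement your route differs from the paper's in both directions, though the ingredients are the same. For necessity, the paper argues directly with the greedy algorithm: if $e\in S\setminus F$ were not externally active, write $C_F(e)=\cdots e\,e'\cdots$ with $e'=e_{k+1}\in F$, and observe that after the greedy algorithm has chosen $e_1,\ldots,e_k$ it would prefer $e$ (or some other element) over $e'$, contradicting $e'\in F$. You instead perform the basis exchange $(F\setminus f)\cup e$ and invoke lexicographic minimality; this is a bit crisper. For sufficiency, the paper shows directly that each $e\in S\setminus F\subseteq\EA_0(F)$ is rejected by the greedy run on $S$ (because all of $C_F(e)\setminus e$ lies in the closure of what has already been picked when $e$ is reached), concluding $F(S)\subseteq F$ and hence $F(S)=F$ by rank. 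Your minimum-symmetric-difference argument reaches the same conclusion by a two-case analysis; it is equally valid, and the key step---that $F\cap\{g:g<_O e\}=F(S)\cap\{g:g<_O e\}$ by minimality of $e$---is exactly what makes both cases collapse. Neither approach is more general, but the paper's sufficiency argument avoids the case split at the cost of tracking the greedy run a little more explicitly.
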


\begin{proof}
Assume $F$ is the minimal basis of $S$ and write $F = e_1e_2 \cdots$ in increasing order in $O$.  Every $e \in S \setm F$ has a fundamental circuit $C_F(e)$.  Suppose $e$ is not externally active with respect to $F$, so that $C_F(e) = \cdots e e' \cdots$; let $e' = e_{k+1}$.  
The set $\{e_1,\ldots,e_k,e\}$ is independent because the only circuit it could contain is $C_F(e)$, but $e' \in C_F(e) \setm\{e_1,\ldots,e_k,e\}$.  
Consider the greedy algorithm for finding $F$.  After choosing $e_1,\ldots,e_k$, the next point chosen cannot be $e'$, because $e$ (or some other point different from $e'$) would be preferred as it has not been chosen, it precedes $e'$ in the ordering, and $\{e_1,\ldots,e_k,e\}$ is independent.  Thus, $e_{k+1} \neq e'$.  This is a contradiction.  Therefore, $e$ must be externally active.

Assume $F \subseteq S \subseteq F \cup \EA_0(F)$.  Thus, $F$ is a basis for $S$; we want to show it is minimal.  
Let $e \in S \setm F$ and write $C_F(e) = e_1\cdots e_k e_{k+1}$ in the ordering $O$; then $e = e_{k+1}$.  In the greedy algorithm for constructing the minimal basis $F(S)$, each point $e_1, \ldots,e_k,e_{k+1}$ is considered in order for inclusion.  
Let $F_i(S)$ be the set of points that have already been chosen when $e_i$ is considered for inclusion.  
If $e_i$ is not then chosen for $F(S)$, it is because $e_i \in \clos F_i(S)$.  If $e_i$ is chosen, then $e_i \in F_{i+1}(S)$.  
Thus, all of $e_1,\ldots,e_k \in \clos F_{k+1}(S)$.  It follows that $e \in \clos F_{k+1}(S)$, so $e \notin F(S)$.  This shows that no point of $\EA_0(F)$ can belong to the minimal basis $F(S)$; hence, $F(S) \subseteq F$ and by comparing ranks we see that $F(S) = F$.

The last part of the lemma follows because $\EA_0(F) \subseteq \clos_0 F$, which is balanced if and only if $F$ is balanced.
\end{proof}

Suppose we already have a balanced independent set $F$ that we want to extend to a semibasis.  We do that by applying the \emph{reverse greedy algorithm}.  That means we take $E\setm F$ and scan down it from the largest point (in the ordering $O$) to the smallest, adding a point to the independent set whenever the resulting set remains independent and balanced.  The set obtained in this way, $T(F)$, is a semibasis that is lexicographically maximum among all semibases that contain $F$.

\begin{lem}\label{L:rgabasis}
For an independent set $F$ in $M$, $T(F)$ has the following properties:
\begin{enumerate}
\item[(o)] If $F$ is a semibasis then $T(F)=F$; in particular, $T(T(F)) = T(F)$.
\item[(i)] $F \supseteq \II(T(F))$ and $T(F) \setm F \subseteq \IA(T(F))$.
\item[(ii)] $\II(F) \subseteq \II(T(F))$.  
\item[(iii)] $\EA(F) \subseteq \clos(\II(T(F))$.
\item[(iv)] $\EA(F) = \EA(\II(T(F))) = \EA(T(F))$; thus, $\epsilon(F) = \epsilon(T(F))$.
\end{enumerate}
\end{lem}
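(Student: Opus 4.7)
The plan is to establish the five parts in order, with (i) carrying the main weight and the rest following rapidly from Lemmas \myref{L:bbcmatroid} and \myref{L:iicompare}. Part (o) is immediate: a semibasis admits no balanced-independent augmentation, so the reverse greedy leaves $F$ unchanged; and since $T(F)$ is itself a semibasis by construction, the self-iteration statement follows.

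For (i) I will prove the equivalent inclusion $T(F) \setm F \subseteq \IA(T(F))$. Fix $e \in T(F) \setm F$ and suppose, for contradiction, that some $e' >_O e$ lies in $D_{T(F)}(e) = \clos T(F) \setm \clos(T(F) \setm e)$. Since $D_{T(F)}(e) \cap T(F) = \{e\}$, we have $e' \in E \setm T(F) \subseteq E \setm F$, so the reverse greedy scanned $e'$ and rejected it. Let $T_{e'}$ be the partial independent set at the moment $e'$ was considered; then $T_{e'} \subseteq T(F) \setm e$, because $e < e'$ was added later. The crucial point is that $\clos T(F) = \clos_0 T(F)$ is balanced (since $T(F)$ is balanced), so $T(F) \cup \{e'\} \subseteq \clos_0 T(F)$ is balanced, and hence so is $T_{e'} \cup \{e'\}$. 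Therefore $e'$ was rejected only because $T_{e'} \cup \{e'\}$ is dependent, in which case uniqueness of fundamental circuits gives $C_{T_{e'}}(e') = C_{T(F)}(e')$, and this circuit is contained in $T_{e'} \cup \{e'\}$, which excludes $e$. But $e' \in D_{T(F)}(e)$ is equivalent to $e \in C_{T(F)}(e')$---contradiction. Claim (ii) is then Lemma \myref{L:iicompare} applied to $F \subseteq T(F)$, using $\II_0 = \II$ on balanced sets.

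For (iii), any $e \in \EA(F)$ yields a broken balanced circuit $C_F(e) \setm e \subseteq F$; by Lemma \myref{L:bbcmatroid} this lies in $\II(F) \subseteq \II(T(F))$ (via (ii)), so $e \in \clos(C_F(e) \setm e) \subseteq \clos \II(T(F))$. For (iv), writing $F_0 := \II(T(F))$ and $F_1 := T(F)$, I will prove both equalities $\EA(F) = \EA(F_0) = \EA(F_1)$ by the same template: the fundamental balanced circuit through an externally active element of the larger set already lies in the smaller set $F_0$ (by Lemma \myref{L:bbcmatroid}), so the two fundamental circuits and their largest elements coincide, while membership of $e$ in the larger set is ruled out by independence. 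The main obstacle is (i): the balanced-independent character of the reverse greedy forces a dichotomy on whether $e'$ was rejected for dependence or for unbalance, and the second alternative must be excluded via the observation that $\clos_0 T(F)$ is already balanced.
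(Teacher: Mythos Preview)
Your proof is correct and takes essentially the same approach as the paper. In particular, your argument for (i)---that any $e'>_O e$ in $D_{T(F)}(e)$ lies in the balanced set $\clos T(F)$, so the reverse greedy could only have rejected it for dependence, whence $C_{T(F)}(e')\subseteq T_{e'}\cup\{e'\}$ excludes $e$---is the paper's reasoning phrased by contradiction rather than directly (and you make the balance step more explicit than the paper does); your handling of (ii)--(iv) via Lemmas~\ref{L:bbcmatroid} and~\ref{L:iicompare} likewise matches the paper.
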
 

\begin{proof}
Part (o) is immediate from the definition.

In (i) the two statements are obviously equivalent; we prove the latter.  Suppose we have a balanced independent set $F'$ and a point $e \notin \clos F'$; call $e$ \emph{$F'$-tolerable} if $F' \cup \{e\}$ is balanced.  
Write $T(F) \setm F = e_k \cdots e_1$ in increasing order, so that each $e_i$ is the largest $F \cup \{e_1,\ldots,e_{i-1}\}$-tolerable point.  Since
$$
e_i \notin \clos(T(F)\setm e_i) \supseteq \clos(F\cup\{e_1,\ldots,e_{i-1}\}),
$$
$e_i$ is larger than any other $F\cup\{e_1,\ldots,e_{i-1}\}$-tolerable point not in $\clos(T(F)\setm e_i)$.  That is, it is internally active.

In Part (ii), $\II(F) \subseteq \II(T(F))$ by Lemma \ref{L:iicompare}.  

In (iii), $e$ is maximal in $C_F(e)$ for $e \in \EA(F)$.  By Lemma \ref{L:bbcmatroid} and Part (ii), $C_F(e) \setm \{e\} \subseteq \II(F) \subseteq \II(T(F))$.

For (iv), suppose we have two balanced independent sets, $F_1 \subseteq F_2$.  Obviously $\EA(F_1) \subseteq \EA(F_2)$, because $(\clos F_1) \setm F_1 \subseteq (\clos F_2) \setm F_2$.  If $\EA(F_2) \subseteq \clos F_1$, then $\EA(F_2) \subseteq \EA(F_1)$; thus the two $\EA$s are equal.  Now apply this fact to $F_1=\II(T(F))$ and $F_2 = F$ or $T(F)$, recalling (iii).
\end{proof}

\subsection{Activities in gain graphs}\label{ggactivities}

When we come to gain graphs, the semimatroid we need is that associated with the balanced edge sets of $\Phi$.  Here, $M_0$ is the \emph{complete lift matroid} $L_0(\Phi)$, which is the matroid on $E_0:=E(\Phi)\cup\{e_0\}$ with rank function for $S \subseteq E$ given by
\begin{align*}
\rk(S) &= \begin{cases}
 n-c(S) &\text{ if $S$ is balanced,} \\ n-c(S)+1 &\text{ if $S$ is unbalanced} ,
\end{cases} \\
\rk(S\cup e_0) &= n-c(S)+1
\end{align*}
\cite[Section II.4]{BG}.  In a way, the complete lift matroid generalizes the usual graphic matroid $G(\G)$, since when $\Phi$ is balanced, $e_0$ is a coloop and $G(\G) = L_0(\Phi) \setm e_0$.  We call the semimatroid associated with $L_0(\Phi)$ and $e_0$ the \emph{semimatroid of graph balance} of $\Phi$.

(For those concerned with loose and half edges: In this section we treat a loose edge $e$ as a balanced loop and a half edge as an unbalanced loop since in the matroid the two types behave exactly the same.)  

Here is how the previous discussion of semimatroids applies to gain graphs.  
A balanced circle is the same thing as a semimatroid circuit, i.e., it is a matroid circuit (in $L_0(\Phi)$) that is a balanced edge set.  
A spanning forest $F$ is the same as a balanced independent set; its closure defined in graphical terms is 
$$
\clos(F) := F\cup \{ e\notin F : F \cup \{e\} \text{ contains a balanced circle}\}.
$$
The reason is that $C_F(e)$, if it exists, must be balanced so it is a balanced circle; it is called the \emph{fundamental circle} of $e$ with respect to $F$.  
For an edge $e\in F$, in $F\setminus e$ one component of $F$ is divided into two; the fundamental relative cocircuit of $e$ with respect to $F$ is the set $D_F(e)$ of edges $f\in E$ that join these two into one (since $F\cup f$ is balanced).
To clarify these ideas we give a descriptive lemma that is particular to gain graphs.
Note that our definitions of activity, derived from the semimatroid of balanced edge sets, differ from the usual ones for graphs.  
A \emph{broken balanced circle} is a balanced circle with its largest edge removed.  

\begin{lem}\label{L:ea}
Suppose $\Phi$ is a gain graph with no balanced digons.  Let $F$ be a spanning forest in $\Phi$.  Then $\EA(F)$ is the set of all edges $e\notin F$ such that $e \in (\clos F) \setm F$ and $C_F(e)\setm e$ is a broken balanced circle.
\end{lem}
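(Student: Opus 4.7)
The plan is to prove both containments by unpacking the definition of external activity and of a broken balanced circle, with the no-balanced-digon hypothesis handling the only subtle point.

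First I would observe the following preliminary fact: since $F$ is a spanning forest, it is balanced; and for any $e\notin F$, the set $F\cup e$ contains at most one graph-theoretic circle, which (if it exists) is $C_F(e)$. In the complete lift matroid $L_0(\Phi)$ a single circle is a matroid circuit exactly when it is balanced, so $e\in(\clos F)\setm F$ is equivalent to $C_F(e)$ existing as a balanced circle. This identifies the ``fundamental circle'' $C_F(e)$ of the graph-theoretic sense with the fundamental circuit in the semimatroid sense whenever $e\in\clos F\setm F$.

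The forward inclusion is essentially by definition. If $e\in\EA(F)$, then $e\in\clos F\setm F$ and $e$ is the largest edge in $C_F(e)$; since $C_F(e)$ is a balanced circle and $e$ is its largest edge, $C_F(e)\setm e$ is, by the definition given just before the lemma, a broken balanced circle.

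For the reverse inclusion, suppose $e\notin F$, $e\in\clos F\setm F$, and $D:=C_F(e)\setm e$ is a broken balanced circle. By definition of ``broken balanced circle'' there exists a balanced circle $C'$ and an edge $f\in C'$, the largest in $C'$, with $C'\setm f = D$. If $f=e$ then $C'=C_F(e)$ and $e$ is the largest edge in $C_F(e)$, so $e\in\EA(F)$ and we are done. The main obstacle is to exclude the case $f\neq e$, and this is exactly where the hypothesis on balanced digons is used.

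Assume $f\neq e$. Since $F$ is a forest, $D\subseteq F$ is a path in $F$; let $u,w$ be its endpoints. Both $D\cup e$ and $D\cup f$ are circles, so each of $e$ and $f$ is a chord between $u$ and $w$, and together they form a digon $\{e,f\}$. Orient $D$ from $u$ to $w$ with gain $g_D$, and orient both $e$ and $f$ from $u$ to $w$. The balance of the circles $D\cup e$ and $D\cup f$ forces $\phi(e)=g_D=\phi(f)$, so the digon $\{e,f\}$ has gain $\phi(e)\phi(f)^{-1}=\1$, i.e., it is a balanced digon. This contradicts the hypothesis that $\Phi$ has no balanced digons, completing the proof.
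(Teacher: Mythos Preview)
Your proof is correct and follows essentially the same line as the paper's: the forward direction is immediate from the definitions, and for the converse you produce a second edge $f$ closing the path $D$ into a balanced circle, conclude $\{e,f\}$ is a balanced digon, and invoke the hypothesis. The only cosmetic differences are that the paper verifies balance of the digon by noting $\{e,f\}\subseteq\clos F$ (which is balanced as the closure of a balanced set) rather than computing gains directly, and the paper explicitly separates out the trivial case $D=\eset$ (where $C_F(e)=\{e\}$ and $e$ is automatically maximal), which your path-endpoint argument tacitly assumes is nonempty.
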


\begin{proof}
If an edge $e \notin F$ is externally active, it is in $(\clos F) \setm F$ and it is maximal in $C_F(e)$.  The latter implies that $C_F(e)\setm e$ is a broken balanced circle.  

To prove the converse, assume $C_F(e)$ exists and $D := C_F(e)\setm e$ is a broken balanced circle.  Either $e$ is maximal in $C_F(e)$, so $e$ is externally active, or $D \neq \eset$ and there is an edge $e' \notin F$, other than $e$, such that $C_F(e') \setm e' = D$.  Then $e$ and $e'$ are parallel links with the same endpoints.  Because they form a digon in $\clos F$, which is balanced since it is the closure of the balanced set $F$, they form a balanced digon, contrary to the assumption.  Consequently, $e'$ cannot exist.
\end{proof}

\subsection{The forest expansion}\label{treeexpansion}

The \emph{forest expansion} of $\Phih$ is
\begin{equation}\label{E:treepoly}
 F_{\Phih,O}(\bu,y) := \sum_F  y^{\epsilon(F)} \prod_{W\in\pi(F)} u_{h/F(W)} ,
\end{equation}
summed over all spanning forests $F$ of $\Phi$.  The forest expansion gives another representation of the balanced dichromatic polynomial.

\begin{thm}\label{T:tree}
The forest expansion is independent of $O$.  Indeed,
$$
 F_{\Phih,O}(\bu,y) = Q_\Phih(\bu,y-1,0) .
$$
\end{thm}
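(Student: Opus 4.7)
The plan is to compute $Q_\Phih(\bu,v,0)$ by partitioning the sum over balanced edge sets according to their minimal basis in the semimatroid of graph balance, and to observe that the resulting expression coincides with $F_{\Phih,O}(\bu,1+v)$. Since that expression manifestly does not depend on $O$, the $O$-independence of $F_{\Phih,O}$ falls out as a corollary of the identity.

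First, at $z=0$ only balanced $S$ contribute, so
\[
Q_\Phih(\bu,v,0) = \sum_{S\text{ balanced}} v^{|S|-n+c(S)} \prod_{W\in\pi(S)} u_{h/S(W)}.
\]
Then Lemma \myref{L:basismatroid}, applied to the semimatroid of graph balance, partitions the balanced subsets of $E$ into intervals $[F,\,F\cup\EA(F)]$ indexed by spanning forests $F$; here $F\cup\EA_0(F)$ is balanced because $F$ is, and for the same reason the activity sets $\EA_0(F)$ and $\EA(F)$ coincide.

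The main step is to show that on each such interval the three quantities $c(S)$, $\pi(S)$, and $h/S(W)$ depend only on $F$. Since $S\subseteq\clos F$, the extra edges of $S\setm F$ close balanced circles inside components already present in $F$, so $\pi(S)=\pi(F)$ and $c(S)=c(F)$. Because $\clos F$ is balanced, $\phi(S_{v_iw})=\phi(F_{v_iw})$ for any two connected vertices, so by Lemma \myref{L:top} the top switching functions satisfy $\eta_S=\eta_F$ on each $W$, and hence $h/S(W)=h/F(W)$. Using $|F|=n-c(F)$, the exponent collapses to $|S|-|F|$ and the interval sum becomes $(1+v)^{\epsilon(F)}$, yielding
\[
Q_\Phih(\bu,v,0) = \sum_F (1+v)^{\epsilon(F)} \prod_{W\in\pi(F)} u_{h/F(W)} = F_{\Phih,O}(\bu,1+v),
\]
which gives the desired identity after substituting $y=v+1$.

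The step most likely to require care is the weight identity $h/S(W)=h/F(W)$: the function $h/S$ is defined via the top switching function of $S$, not of $F$, so the argument has to use Lemma \myref{L:top} together with the fact that gains of paths in a balanced set depend only on endpoints, in order to see that the two top switching functions actually agree on each component $W$.
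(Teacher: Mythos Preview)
Your proposal is correct and follows essentially the same route as the paper: restrict to balanced $S$ at $z=0$, partition by minimal basis via Lemma~\ref{L:basismatroid}, observe that $\pi(S)=\pi(F)$ and the weights agree, and sum the interval to $(v+1)^{\epsilon(F)}$. The only cosmetic difference is in the weight step: the paper invokes the general remark that $h/R=h/S$ whenever $\pib(R)=\pib(S)$ to get $h/S(W)=h/\clos(F)(W)=h/F(W)$, while you argue directly from Lemma~\ref{L:top} that $\eta_S=\eta_F$; both are valid and amount to the same observation.
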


\begin{proof}
Let us expand.  In each sum, $S$ is restricted to balanced edge sets that satisfy the stated conditions.
\begin{align*}
Q_\Phih(\bu,v,0) 
 &= \sum_{S} v^{|S|-\rk(S)} \prod_{W\in\pi(S)} u_{h/S(W)} \\
 &= \sum_{F \text{ spanning forest}} \sum_{\substack{S\supseteq F\\ F(S)=F}}  v^{|S\setminus F|} \prod_{W\in\pi(S)} u_{h/S(W)} \\
 &= \sum_{F \text{ spanning forest}} \sum_{F\subseteq S \subseteq F\cup\EA(F)}  v^{|S\setminus F|} \prod_{W\in\pi(F)} u_{h/F(W)} 
\intertext{by Lemma \ref{L:basismatroid}, because $\pi(S)=\pi(F)$, and because $\EA(F)\subseteq\clos(F)$ so that $h/S(W) = h/{\clos(F)}(W) = h/F(W)$,}
 &= \sum_{F \text{ spanning forest}} (v+1)^{|\EA(F)|} \prod_{W\in\pi(F)} u_{h/F(W)} .
\qedhere
\end{align*}
\end{proof}

For an ordinary graph $\G$ (without gains or weights or loose or half edges), where all $u_h=u$, Theorem \ref{T:tree} says that 
$$
Q_\G(u,y-1) = F_\G(u,y) := \sum_F y^{\epsilon(F)} u^{c(F)},
$$ 
where $Q_\G$ is Tutte's dichromatic polynomial of $\G$.  (We do not know a source for this formula.)  A similar formula holds for the balanced dichromatic polynomial $Q^\textb_\Phi$ of a gain graph without weights.

We hoped for a spanning-tree expansion analogous to Tutte's for graphs, but we could not find one.  The problem is that semibases do not span the matroid.  When the semimatroid of graph balance is a matroid, as when $\Phi$ is balanced, a semibasis is a basis; then for a basis $T$ and an independent set $F$, $T(F) = T$ if and only if $\II(T) \subseteq F \subseteq T$.  This property lets us replace the sum over spanning forests by a sum over spanning trees.  We did not find an analogous property of semibases.

\section{Coloring}\label{coloration}

A proper coloration is a way of assigning to each vertex an element of a color set, subject to exclusion rules governed by the edges.  The subject of \cite{SOA} was the problem of counting integral lattice points not contained in specified integral affinographic hyperplanes (see Section \ref{affino}).  We solved it by reinterpreting lattice points as proper colorations of a $\bbZ$-weighted integral gain graph.  In this section we develop a theory of proper colorations of all weighted gain graphs.  
We begin with list coloring, where the weight of a vertex is a finite list of possible colors (Section \ref{list}).  We then go on to the heart of our treatment of coloring: infinite lists with an additional constraint regarded as a multidimensional variable; especially, upper bounds that make the effective list finite (Sections \ref{filtered} and \ref{intlatticelist}); this generalizes ordinary graph coloring, in which the list is $\{1,2,3,\ldots\}$ restricted by a variable upper bound $k$.  
Finally (Sections \ref{affino} and \ref{affinomatrix}), we apply the general definition to multidimensional integral gain groups and weights, which have the geometrical meaning of counting integer lattice points that lie in a given rectangular parallelepiped but not in any of a family of integral affinographic subspaces (all of which will be explained).

A notion of proper coloring of gain graphs was developed in \cite[Section III.5]{BG} (and there called zero-free coloring).  There is a \emph{color set} $\fC$, which is any set upon which the gain group $\fG$ has a fixed-point-free right action (that is, the only element of $\fG$ that has any fixed points is the identity).  
A \emph{coloration} of $\Phi$ is any function $x: V \to \fC$.  The set of \emph{improper} edges of $x$ is
$$
I(x) := \{ e_{ij} : x_j=x_i\phi(e_{ij}) \}.
$$ 
The coloration is \emph{proper} if $I(x)=\eset$.  
A basic fact from \cite[Section III.5]{BG} is that an improper edge set is balanced and closed.  For completeness we give the easy proof here.  (We remind the reader of the definitions in Section \ref{gaingraphs}.)

\begin{lem}\label{L:improper}
The improper edge set $I(x)$ of a coloration $x$ is balanced and closed.
\end{lem}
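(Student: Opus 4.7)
The plan is to prove the two assertions separately, using only the definition of ``improper edge'' together with the fact that the gain group $\fG$ acts fixed-point-freely on $\fC$.

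For \emph{balance}: Note first that $I(x)$ contains no half edges, because only links and loops have gains and the defining condition $x_j = x_i\phi(e_{ij})$ refers to $\phi(e_{ij})$. Now take any circle $C \subseteq I(x)$, write it as a consistently oriented word $e_1e_2\cdots e_l$ with successive vertices $v_{i_0},v_{i_1},\ldots,v_{i_l}=v_{i_0}$, so that each $e_k$ runs from $v_{i_{k-1}}$ to $v_{i_k}$. Since each $e_k$ is improper, $x_{i_k} = x_{i_{k-1}}\phi(e_k)$, and iterating gives
\[
x_{i_0} = x_{i_l} = x_{i_0}\,\phi(e_1)\phi(e_2)\cdots\phi(e_l) = x_{i_0}\,\phi(C).
\]
Since the $\fG$-action on $\fC$ is fixed-point-free, $\phi(C)=\1$; hence every circle in $I(x)$ is balanced, so $I(x)$ is balanced.

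For \emph{closure}: I will use the characterization in \eqref{E:bal-closure}, namely that $\clos(I(x))$ consists of $I(x)$ together with any $e\notin I(x)$ that lies on a balanced circle $C \subseteq I(x)\cup\{e\}$. Given such a circle $C$, orient it as $e_1\cdots e_{l-1}e_l$ with $e_l=e$ and $e_1,\ldots,e_{l-1}\in I(x)$; let its vertices be $v_{i_0},\ldots,v_{i_l}=v_{i_0}$. Iterating the improper condition on $e_1,\ldots,e_{l-1}$ yields
\[
x_{i_{l-1}} = x_{i_0}\,\phi(e_1)\cdots\phi(e_{l-1}).
\]
Multiplying on the right by $\phi(e_l)$ and using $\phi(C)=\1$ gives
\[
x_{i_{l-1}}\,\phi(e_l) = x_{i_0}\,\phi(C) = x_{i_0} = x_{i_l},
\]
which says exactly that $e_l$ is improper, i.e.\ $e \in I(x)$. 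Thus $\clos(I(x)) = I(x)$. The degenerate case of a balanced loop $\1 e_{ii}$ is handled identically (or directly: $x_i = x_i\cdot\1$ trivially), showing balanced loops that happen to be in $\Phi$ are automatically in $I(x)$, as the parenthetical remark after \eqref{E:bal-closure} requires.

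The argument is essentially a one-step transfer of information along walks: balance expresses that going around a closed walk of improper edges multiplies the color by the circle's gain, while closure expresses that adding a chord with the correct compensating gain is forced to be improper. The only point needing care is bookkeeping of edge orientations so that the identity $\phi(e^{-1})=\phi(e)^{-1}$ makes the telescoping product well-defined; the fixed-point-freeness of the $\fG$-action is the one nontrivial ingredient, invoked to conclude $\phi(C)=\1$ from $x_{i_0}\phi(C)=x_{i_0}$.
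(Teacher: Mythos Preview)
Your proof is correct and follows essentially the same approach as the paper's: telescoping the impropriety condition around a circle to invoke fixed-point-freeness for balance, and then showing any edge in $\clos(I(x))$ is forced to be improper. The only cosmetic difference is that for closure you work directly with the balanced circle from \eqref{E:bal-closure}, while the paper phrases the same computation in terms of a path in $I(x)$ whose gain matches $\phi(e)$.
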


\begin{proof}
First we prove balance.  Suppose a circle $e_{01}e_{12}\cdots e_{l-1,l} \subseteq I(x)$, where $v_0=v_l$.  Impropriety of the edges implies that $x_l = x_0 \phi(e_{01}e_{12}\cdots e_{l-1,l})$.  Thus $x_0=x_l$ is a fixed point of $\phi(e_{01}e_{12}\cdots e_{l-1,l})$.  By our overall hypothesis that the action is fixed-point free, the circle has gain $\1$.  Thus, $I(x)$ is balanced.

Suppose now that $e$ is an edge from $v$ to $w$ in the closure of $I(x)$.  Since $I(x)$ is balanced, its closure is balanced (see Equation \eqref{E:bal-closure} and the accompanying text).  Thus, there is a path $e_{12}\cdots e_{l-1,l}$ in $I(x)$ connecting the endpoints of $e$ (that is, $v_1=v$ and $v_l=w$) whose gain $\phi(e_{12}\cdots e_{l-1,l}) = \phi(e)$.  Since $x_l = x_{l-1} \phi(e_{l-1,l}) = \cdots = x_1 \phi(e_{12})\cdots \phi(e_{l-1,l}) = x_1 \phi(e_{12}\cdots e_{l-1,l}) = x_1 \phi(e)$, $e$ is improper.  Hence, $e \in I(x)$; that is, $I(x)$ is closed.
\end{proof}

In contrast to \cite{BG}, in this paper we have an infinite color set.  We use weights in various ways to limit the possible colorations to a finite set.  We have especially in mind two kinds of example.  
In the first, the group and the color set are both $\bbZ$ and the color lists are arbitrary subsets of $\bbZ$ that are bounded below and whose complements are bounded above; but there is a variable upper bound $m$ on the possible colors; thus the number of proper colorations is a function of $m$.  We call this \emph{open-ended list coloring}.  
In the second, the group and color set are both $\bbZ^d$, the color lists are dual order ideals in $\bbZ^d$, and there is a variable upper bound $\bm_i$ on the colors that can be used at vertex $v_i$.  (This problem has a nice geometrical interpretation.)  
We wish to cover both of these examples, as well as similar ones, in a way that exposes to view the essential features; therefore we generalize considerably.

\subsection{List coloring}\label{list}

A simple kind of list coloring is the basis of all our methods of coloring a weighted gain graph.
The idea is to let $\fW$ be any class of subsets of the color set $\fC$ that is closed under intersection and the $\fG$-action, with intersection as its semigroup operation; 
these subsets can be used as vertex color lists.  A \emph{list coloration} or simply \emph{coloration} is any function $x: V \to \fC$ such that all $x(v_i) \in h_i$.  It is proper as a list coloration if it is proper as a coloration of its gain graph.

In list coloring a contracted weight $h/B$ has the formula 
$$
h/B(W) = \bigcap_{v_i \in W}  h_i \eta_B(v_i).
$$
This is the list-coloring interpretation of the definition in Equation \eqref{E:wtcontract}.  
(Recall that if $v_i \in W$ and it happens that $W$ has a top vertex $t_i$, then $\eta_B(v_i) = \phi(B_{v_it_i})$.)

We need to switch colorations.  If $x$ is a coloration of $\Phih$ and $\eta$ is a switching function, we define $x^\eta$ by 
$$
x^\eta(v_i) := x_i \eta(v_i),
$$ 
the result of the gain-group action on $x_i$.  

\begin{prop}\label{P:finlist}
If in $\Phih$ not all vertex lists $h_i$ are finite, then the number of proper colorations is either zero or infinite.  If all lists are finite, then the number of proper colorations equals
$$
\sum_{B \in \Latb \Phi} \mu(\eset,B) \prod_{W \in \pi(B)} |h/B(W)| 
 = \sum_{B \subseteq E: \text{ balanced}} (-1)^{{|B|}} \prod_{W \in \pi(B)} |h/B(W)| ,
$$
where $\mu$ is the M\"obius function of $\Latb\Phi$.   
\end{prop}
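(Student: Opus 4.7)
If some vertex weight $h_i$ is infinite, I first show the count is zero or infinite.  Suppose at least one proper coloration $x$ exists.  I modify $x$ only at $v_i$, assigning any $c \in h_i$: the modified coloration remains proper at all edges not incident to $v_i$, and at an incident edge $e_{ij}$ it is proper iff $c \neq x_j \phi(e_{ij})\inv$.  Since $\Phi$ is finite, $v_i$ has finite degree, so this excludes only finitely many elements of $h_i$; as $h_i$ is infinite, infinitely many admissible $c$ remain.

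For the finite case, the central combinatorial fact is that for every balanced $B \subseteq E$,
\[
N(B) := \#\bigl\{\text{colorations } x : x_i \in h_i \text{ for all } i \text{ and } I(x) \supseteq B\bigr\} = \prod_{W \in \pi(B)} |h/B(W)|,
\]
and $N(B) = 0$ for unbalanced $B$ by Lemma \myref{L:improper}.  For balanced $B$, I switch by the top switching function $\eta_B$, so that $\phi^{\eta_B}|_B \equiv \1$ and $h^{\eta_B}_i = h_i\eta_B(v_i)$.  Then $I(x) \supseteq B$ is equivalent to the switched coloration $x^{\eta_B}$ being constant on each component $W \in \pi(B)$; the permitted constant value on $W$ must lie in $\bigcap_{v_i \in W} h^{\eta_B}_i = h/B(W)$, and components contribute independently.

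The second formula follows from inclusion-exclusion on the finite edge set $E$:
\[
\#\{\text{proper colorations}\} \;=\; \sum_{B \subseteq E} (-1)^{|B|} N(B) \;=\; \sum_{\substack{B \subseteq E\\ \text{balanced}}} (-1)^{|B|} \prod_{W \in \pi(B)} |h/B(W)|.
\]
For the first formula, Lemma \myref{L:improper} says $x \mapsto I(x)$ takes values in $\Latb \Phi$.  Setting $P(C) := \#\{x : I(x) = C\}$ for $C \in \Latb\Phi$, one has $N(B) = \sum_{C \supseteq B,\, C \in \Latb\Phi} P(C)$ for every $B \in \Latb\Phi$, so M\"obius inversion on the semilattice $\Latb\Phi$ yields
\[
P(\eset) \;=\; \sum_{B \in \Latb\Phi} \mu(\eset, B)\, N(B),
\]
which is the desired count since $P(\eset)$ is the number of proper colorations.

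The main obstacle is verifying the $N(B)$ identity.  This requires carefully tracking how the list constraints $x_i \in h_i$ interact with switching: they become the uniform constraint $x^{\eta_B}(W) \in h_i\eta_B(v_i)$ at every $v_i \in W$, whose conjunction is precisely $h/B(W)$ by definition of the contracted weight.  Once this identity is established, both formulas follow formally from standard inversion principles, and their agreement is automatic, since both compute $P(\eset)$.
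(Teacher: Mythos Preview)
Your proposal is correct and follows essentially the same approach as the paper: the infinite case by modifying one vertex's color, the finite case by computing $N(B)$ (the paper's $g(B)$) via the switching bijection, and then M\"obius inversion on $\Latb\Phi$ for the first formula.  Your treatment is slightly more explicit than the paper's in two places: you spell out ordinary inclusion--exclusion over all $B\subseteq E$ for the second formula (the paper only remarks that it ``has a similar proof''), and your bound in the infinite case uses the degree of $v_i$ rather than $n-1$, which is the correct count when multiple edges are allowed.
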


For the M\"obius function of a poset see, inter alia, \cite{FCT, EC1}; note that $\mu(\eset,B) = 0$ if the empty set is not closed, that is, if $\Phi$ has a balanced loop or a loose edge.  
The two sums are equal because $\mu(\eset,B) = \sum \{ (-1)^{|B'|} : \clos(B') = B \}$ if $B$ is balanced and closed, and $\pi(B')=\pi(B)$.  
For $\Latb\Phi$ see the end of Section \ref{gaingraphs}.

\begin{proof}
First let us suppose $h_n$ is infinite.  If there is any proper coloration $x$, then there are at most $n-1$ values that $x_1,\ldots,x_{n-1}$ prevent $x_n$ from taking on; but there is an infinite number of permitted possible choices of $x_n\in h_n$.

Now we assume all lists are finite.  To prove the first part of the formula we use M\"obius inversion over $\Latb\Phi$ as in \cite[p.\ 362]{FCT} or \cite[Theorem 2.4]{SGC}.  (The second part has a similar proof by inversion over the class of balanced edge sets.)  
Throughout the proof $B$ denotes an element of $\Latb\Phi$.  Consider all colorations of $\Phih$, proper or not; let $f(B)$ be the number of colorations $x$ such that $I(x)=B$ and let $g(B)$ be the number of colorations such that $I(x) \supseteq B$.  By Lemma \ref{L:improper} each coloration is counted in one $f(B)$, so for a closed, balanced set $A$, 
$$
g(A) = \sum_{B \supseteq A} f(B),
$$
from which by M\"obius inversion 
$$
f(A) = \sum_{B \supseteq A} \mu(\eset,B) g(B) .
$$
Setting $A = \eset$, the total number of proper colorations equals
$$
\sum_{B} \mu(\eset,B) g(B) .
$$

We show by a bijection that $g(B)$ is the number of all colorations of $\Phih/B$, which clearly equals $\prod_{W \in \pi(B)} |h/B(W)|$.  Let $\eta_B$ be the top switching function for $B$.  It is easy to see that, because $B$ is balanced, switching a coloration $x$ of $\Phih$ that is counted by $g(B)$ gives a coloration of $\Phih^{\eta_B}$ that is constant on components of $B$, and conversely.  Therefore, if $W \in \pi(B)$, $y_W$, defined as the common value of $x_i^{\eta_B}$ for every $v_i \in W$, belongs to $h_i^{\eta_B}$ for every $v_i \in W$.   When we contract $\Phih$ by $B$, $y_W \in \bigcap_{v_i \in W} h_i^{\eta_B} = h/B(W)$, so we get a well-defined coloration $y$ of $\Phih/B$.

Conversely, for any $B\in \Latb\Phi$, a coloration $y$ of $\Phih/B$ pulls back to a coloration of $\Phih^{\eta_B}$ by $x_i = y_W$ where $v_i\in W \in \pi(B)$.  Then switching back to $\Phih$ we have a coloration $x^{\eta_B\inv}$ of $\Phih$ whose improper edge set contains $B$.
Since it is clear that these correspondences are inverse to each other, the bijection is proved.
\end{proof}

The last part of the proof can be strengthened to yield a formula for proper colorations of contractions.

\begin{prop}\label{P:contractedproper}
If all vertex lists $h_i$ are finite and $B$ is a balanced edge set, then the number of colorations of $\Phih$ whose improper edge set equals $B$ equals the number of proper colorations of $\Phih/B$.
\end{prop}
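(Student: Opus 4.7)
The plan is to upgrade the bijection already constructed in the proof of Proposition \myref{P:finlist} into a refined bijection that matches improper-edge sets exactly. Recall that in that proof, for any $B \in \Latb\Phi$, a coloration $x$ of $\Phih$ with $I(x) \supseteq B$ corresponds to a coloration $y$ of $\Phih/B$ via $y_W := x_i^{\eta_B} = x_i \eta_B(v_i)$ for any (hence every) $v_i \in W \in \pi(B)$. I plan to show that under this correspondence, $I(x) = B$ if and only if $y$ is a proper coloration of $\Phih/B$.

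First I would handle the case that $B$ is balanced and closed. The edges of $\Phih/B$ are exactly the edges $e_{ij} \in E \setm B$, each with gain $\phi^{\eta_B}(e_{ij}) = \eta_B(v_i)\inv \phi(e_{ij}) \eta_B(v_j)$. A direct expansion shows that such an edge is improper for $y$ iff $y_{W'} = y_W \phi^{\eta_B}(e_{ij})$, which simplifies (using the semigroup action identities for $\eta_B(v_j)$) to $x_j = x_i \phi(e_{ij})$, i.e., $e_{ij} \in I(x)$. Therefore $y$ is proper iff $I(x) \cap (E \setm B) = \eset$, and combined with the hypothesis $I(x) \supseteq B$ this says exactly $I(x) = B$.

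Next I would handle the case that $B$ is balanced but not closed. By Lemma \myref{L:improper} every $I(x)$ is closed, so no coloration of $\Phih$ has $I(x) = B$; the left-hand count is $0$. On the other side, pick any $e \in \clos(B) \setm B$; then $e$ lies in a balanced circle $C \subseteq B \cup \{e\}$, and after switching by $\eta_B$ the path $C \setm e$ in $B$ has gain $\1$, so $\phi^{\eta_B}(e) = \1$. After contraction this $e$ becomes a loop in $\Phih/B$ of gain $\1$, i.e., a balanced loop, which forces every coloration of $\Phih/B$ to be improper; so the right-hand count is $0$ as well.

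The main obstacle is the first case's bookkeeping: verifying that the improperness relation $x_j = x_i \phi(e_{ij})$ in $\Phih$ corresponds precisely to the improperness relation for the contracted, switched edge in $\Phih/B$, and ensuring it survives unchanged whether the endpoints of $e_{ij}$ lie in the same component of $B$ (so $e_{ij}$ becomes a loop in $\Phih/B$) or in different components. Once that identification is made, both directions of the equivalence $I(x) = B \Leftrightarrow y$ proper follow immediately, and combining the two cases yields the claimed equality of counts.
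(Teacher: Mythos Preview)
Your proposal is correct and follows exactly the approach the paper intends: refine the bijection from the proof of Proposition~\ref{P:finlist} so that improperness of each edge $e_{ij}\in E\setminus B$ for $x$ matches improperness of the corresponding edge for $y$ in $\Phih/B$. The paper's own proof is only a one-line sketch (``a simple modification of the evaluation of $g(B)$ in the previous proof''), so your write-up is in fact more detailed than what the paper provides.

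One minor remark: your case split into ``$B$ closed'' versus ``$B$ not closed'' is harmless but not actually needed. The bijection $\{x:I(x)\supseteq B\}\leftrightarrow\{\text{colorations of }\Phih/B\}$ and the edge-by-edge improperness computation work for \emph{any} balanced $B$, and together they yield $I(x)=B\iff y$ proper directly. When $B$ is not closed, this simply manifests as both sides being empty, which your separate Case~2 verifies independently; so the two cases are consistent but the second is subsumed by the first.
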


The proof is a simple modification of the evaluation of $g(B)$ in the previous proof, and is also a simple generalization of the evaluation of $f(B)$ in the proof of \cite[Theorem 3.3]{SOA}.  
(In \cite{SOA} we accidentally wrote $f(B)$ when we meant $g(B)$; but that led us to write a proof of Proposition \ref{P:contractedproper} in the special situation of \cite{SOA}.  We thank Seth Chaiken for pointing out the error in \cite{SOA}.)

Let $\cPf(\fC)$ be the class of finite subsets of $\fC$, and let us call a weighted gain graph with weights in $\cPf(\fC)$ \emph{finitely list weighted}.  Then the total dichromatic polynomial has a variable $u_h$ for each finite subset $h \subseteq \fC$.

\begin{thm}\label{T:finlist}
If $\Phih$ is finitely list weighted, then the number of proper colorations equals $(-1)^n Q_{\Phih}(\bu,-1,0)$ evaluated at $u_{h} = -|h|$.
\end{thm}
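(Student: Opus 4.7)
The plan is to substitute directly into the definition of $Q_\Phih$ and match the resulting expression against the closed formula from Proposition \myref{P:finlist}. This is a routine specialization rather than a genuinely new argument, so I expect no real obstacles beyond careful bookkeeping of signs.

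First I would set $z=0$. In the defining sum
$$Q_\Phih(\bu,v,z) = \sum_{S\subseteq E} v^{|S|-n+b(S)}\, z^{c(S)-b(S)} \prod_{W\in\pib(S)} u_{h/S(W)},$$
the factor $z^{c(S)-b(S)}$ kills every term in which some component of $S$ is unbalanced, so only balanced $S$ survive. For such an $S$ we have $b(S)=c(S)$ and $\pib(S)=\pi(S)$, which has exactly $c(S)$ blocks.

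Next I would set $v=-1$ and $u_{h}=-|h|$. Each term's edge factor becomes $(-1)^{|S|-n+c(S)}$, while the product over blocks becomes
$$\prod_{W\in\pi(S)}\bigl(-|h/S(W)|\bigr) = (-1)^{c(S)}\prod_{W\in\pi(S)} |h/S(W)|.$$
Multiplying these and then by the external $(-1)^n$ demanded by the statement, the two occurrences of $(-1)^{c(S)}$ cancel and the factor $(-1)^n (-1)^{-n}=1$, leaving
$$(-1)^n Q_\Phih(\bu,-1,0)\Big|_{u_h=-|h|} = \sum_{S\subseteq E\text{ balanced}} (-1)^{|S|} \prod_{W\in\pi(S)} |h/S(W)|.$$

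Finally I would invoke the second form of the identity in Proposition \myref{P:finlist}, which asserts that this very sum equals the number of proper list-colorations of $\Phih$. That completes the proof. The only subtlety worth flagging is the parity accounting (three separate appearances of $(-1)^{c(S)}$ and $(-1)^n$ conspire to cancel), but no new combinatorial idea is needed: the theorem is essentially a repackaging of Proposition \myref{P:finlist} in the language of the total dichromatic polynomial.
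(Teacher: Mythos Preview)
Your proof is correct and follows exactly the approach indicated in the paper, which simply says to compare the second formula of Proposition \ref{P:finlist} with the definition of $Q_{\Phih}$. You have merely spelled out the sign bookkeeping that the paper leaves implicit.
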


Note that we evaluate the balanced dichromatic polynomial, not the total polynomial.  We do not have a similar interpretation of an evaluation of the total dichromatic polynomial.

\begin{proof}
The proof is by comparing the second formula of Proposition \ref{P:finlist} to the definition of $Q_{\Phih}$.
\end{proof}

Call a \emph{signed Tutte invariant} any function that satisfies (Tii--iv) and the modified form of (Ti),
\begin{enumerate}
 \item[(Ti${}^-$)] (Subtractivity)  For every link $e$, 
$$
 f(\Phi,h) = f(\Phi\setminus e,h) - f(\Phi/e,h/e) .
$$
\end{enumerate}
It is clear that $f$ is a signed Tutte invariant if and only if $(-1)^{|V|} f$ is a Tutte invariant.

\begin{cor}\label{C:finlistdc}
Given a gain group $\fG$ and a color set $\fC$, the number of proper colorations is a signed Tutte invariant of finitely list-weighted gain graphs with gains in $\fG$.
\end{cor}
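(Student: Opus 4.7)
The plan is to deduce the corollary immediately from Theorem \myref{T:finlist} together with Theorem \myref{T:dandc}, using the remark made just before the corollary that $f$ is a signed Tutte invariant if and only if $(-1)^{|V|} f$ is a Tutte invariant. Write $P(\Phih)$ for the number of proper colorations. Theorem \myref{T:finlist} gives the closed form
\[
P(\Phih) = (-1)^n \, Q_\Phih(\bu,-1,0) \Big|_{u_h = -|h|},
\]
where $n = |V|$ and the substitution is made for every finite $h \subseteq \fC$.

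Next I would observe that Theorem \myref{T:dandc} asserts $Q_\Phih(\bu,v,z)$ satisfies (Ti)--(Tiv) as a function of weighted gain graphs, with values in a polynomial ring. These four conditions are polynomial identities in $\bu$, $v$, $z$, so they are preserved under any numerical substitution. In particular, the numerical function $\Phih \mapsto Q_\Phih(\bu,-1,0)|_{u_h=-|h|}$ is itself a Tutte invariant of finitely list-weighted gain graphs: multiplicativity, unitarity, and isomorphism invariance are immediate, and for a link $e$ the additivity identity
\[
Q_\Phih(\bu,-1,0) = Q_{\Phih\setminus e}(\bu,-1,0) + Q_{\Phih/e,\,h/e}(\bu,-1,0)
\]
survives substitution because the substitution $u_k \mapsto -|k|$ depends only on the weight $k$, and the contracted weight $h/e$ on the new vertex $W \in \pib(\{e\})$ is treated consistently on both sides.

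Finally, multiplying by $(-1)^n$ converts this Tutte invariant to a signed Tutte invariant by the remark cited above; equivalently, one checks directly that unitarity and multiplicativity are preserved (since $(-1)^{n_1+n_2} = (-1)^{n_1}(-1)^{n_2}$), that additivity (Ti) turns into subtractivity (Ti${}^-$) because a link contraction drops the vertex count by one and hence flips the sign of the contraction term, and that isomorphism invariance is obvious. This gives the corollary.

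I do not expect any real obstacle: the content is just the combination of the two theorems with the sign adjustment, and the only point that needs a line of care is that the substitution rule $u_h \mapsto -|h|$ is genuinely a substitution of constants depending only on $h$, so it commutes with deletion and contraction and therefore preserves all four Tutte identities.
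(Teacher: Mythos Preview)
Your argument is correct and follows essentially the same route as the paper: invoke Theorem~\ref{T:finlist} to express the count as $(-1)^n$ times a specialization of $Q_\Phih$, then use that $Q_\Phih$ is a Tutte invariant (Theorem~\ref{T:dandc}) and the sign remark to conclude. The paper's proof is terser but adds one point you left implicit: it first observes that the class of finitely list-weighted $\fG$-gain graphs is closed under deletion and contraction (because contracting a balanced edge produces, via intersection, again a finite list), which is needed for ``signed Tutte invariant of finitely list-weighted gain graphs'' to be meaningful.
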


\begin{proof}
The family of finitely list weighted $\fG$-gain graphs is closed under deletion and contraction because finiteness of lists is preserved by those operations.  Apply Theorem \ref{T:finlist}.
\end{proof}

\myexam{Finite lists}\label{X:finite}
If $\fC$ is partially ordered, take $\fW$ to consist of all finite order ideals, or all finite intervals.  The special case $\fC = \bbZ_{\geq0}^d$ is a main example (see Section \ref{affino}).
\end{exam}

\subsection{Filtered lists}\label{filtered}

In examples the color lists for the vertices are not always finite.  
A general picture is that for each vertex there is a fixed list $h_i$, which is some subset of the color set $\fC$, and there is also a variable set $M_i\subseteq\fC$ that acts as a filter of colors: a color must lie not only in its vertex list but also in $M_i$.  Thus we have a function of filters, $\chi_\Phih(M)$ defined for $M := (M_1,\ldots,M_n) \in \cP(\fC)^n$, whose value is the number of proper colorations of $\Phih$ using only colors in $M_i$ at vertex $v_i$.  This is the \emph{list chromatic function} of $\Phih$.  
An example, of course, is the quantity of Proposition \ref{P:finlist}, which equals $\chi_\Phih(\fC^n)$ (finite lists with no filtering).  
That proposition has the following extension.  We define switching of a color filter $M$ and its contraction $M/B(W) $ just as for weights, so that 
$$
M/B(W) := \bigcap_{v_i \in W}  M_i \eta_B(v_i).
$$

\begin{prop}\label{L:genlist}
If every intersection $M_i \cap h_i$ is finite, then 
$$
\chi_\Phih(M) = \sum_{B \in \Latb\Phi} \mu(\eset,B) \prod_{W \in \pi(B)}  | h/B(W) \cap M/B(W) | .
$$
\end{prop}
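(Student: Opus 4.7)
The plan is to reduce the statement to Proposition \myref{P:finlist} by absorbing the filter into the list. Define an effective list weight by $h'_i := h_i \cap M_i$. By hypothesis each $h'_i$ is finite, so $(\Phi,h')$ is a finitely list-weighted gain graph in the sense of Section \myref{list}. A coloration $x:V\to\fC$ is counted by $\chi_\Phih(M)$ exactly when $x_i\in h_i$, $x_i\in M_i$, and $I(x)=\eset$; equivalently, $x_i\in h'_i$ and $I(x)=\eset$. Note that properness depends only on $\Phi$, not on the weights, so a proper coloration in the filtered sense is the same object as a proper list coloration of $(\Phi,h')$. Hence
\[
\chi_\Phih(M) = \#\{\text{proper colorations of }(\Phi,h')\}.
\]

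Applying Proposition \myref{P:finlist} to $(\Phi,h')$ yields
\[
\chi_\Phih(M) = \sum_{B\in\Latb\Phi} \mu(\eset,B) \prod_{W\in\pi(B)} |h'/B(W)|.
\]

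To finish I would verify the identity $h'/B(W) = h/B(W) \cap M/B(W)$. By the contracted-weight formula for list coloring,
\[
h'/B(W) = \bigcap_{v_i\in W} (h_i\cap M_i)\,\eta_B(v_i).
\]
The key point is that the right action of $\fG$ on $\fW\subseteq\cP(\fC)$ is, by hypothesis (see Section \myref{weights}), a semigroup automorphism, and in this setting the semigroup operation is intersection; equivalently, each $g\in\fG$ acts as a bijection of $\fC$ and therefore commutes with intersections. Consequently $(h_i\cap M_i)\,\eta_B(v_i) = h_i\eta_B(v_i)\cap M_i\eta_B(v_i)$, and taking intersections over $v_i\in W$ and rearranging gives
\[
h'/B(W) = \Big(\bigcap_{v_i\in W} h_i\eta_B(v_i)\Big) \cap \Big(\bigcap_{v_i\in W} M_i\eta_B(v_i)\Big) = h/B(W) \cap M/B(W).
\]
Substituting this into the formula above yields the claimed identity.

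There is no real obstacle here: once the filter is folded into the list to produce a finitely list-weighted gain graph, the whole statement is Proposition \myref{P:finlist} followed by an automorphism-of-$\fG$-on-$\fW$ commutation argument. The only thing to watch is that the hypothesis ``$M_i\cap h_i$ finite'' is exactly what is needed to invoke Proposition \myref{P:finlist}; without it, Proposition \myref{P:finlist} would give an infinite answer (or zero) and the Möbius sum would not be meaningful as a finite quantity.
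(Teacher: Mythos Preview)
Your proposal is correct and follows essentially the same approach as the paper, which proves the proposition in one line: ``In Proposition \myref{P:finlist} replace $h_i$ by $h_i \cap M_i$.'' You have simply made explicit the identity $h'/B(W) = h/B(W)\cap M/B(W)$ that the paper leaves to the reader.
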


\begin{proof}
In Proposition \ref{P:finlist} replace $h_i$ by $h_i \cap M_i$.
\end{proof}

Since color filters contract like weights, we can form a \emph{doubly weighted} gain graph by taking new weights $(h_i,M_i)$, provided we define a new semigroup $\fM_0$ from which the double weights are drawn.  To that end, let 
$$
\fM_0 := \{ (h',M') : h' \in \fW,\ M' \subseteq \fC, \text{ and } M' \cap h' \text{ is finite} \} .
$$
The semigroup operation is componentwise intersection, i.e., $(h',M') \cap (h'',M'') := (h' \cap h'', M' \cap M'')$, and the action of $\fG$ on $\fM_0$ is componentwise.  
Given this weight semigroup, there is a doubly weighted total dichromatic polynomial, which we write $Q_{\Phih,M}(\bu,v,z)$, with $M = (M_1,\ldots,M_n)$, to emphasize the different roles of $h$ and $M$.  The variables are now $u_{h',M'}$ for each pair $(h',M') \in \fM_0$, and of course $v$ and $z$, and the formula for the doubly weighted polynomial is
$$
Q_{\Phih,M}(\bu,v,z) = 
 \sum_{S\subseteq E} v^{|S|-n+b(S)} z^{c(S)-b(S)} 
 \prod_{W\in\pib(S)} u_{h/S(W),M/S(W)} .
$$
where, as usual, $h/S(W) := h^{\eta_S}(w)$ and $M/S(W) := M^{\eta_S}(w)$ for $w \in W$.  It is easy to see that every $(h/S(W),M/S(W)) \in \fM_0$ if every $(h_i,M_i) \in \fM_0$.

\begin{thm}\label{T:genlist}
If $\Phih$ and $M_1,\ldots,M_n \subseteq \fC$ are such that the filtered list $M_i \cap h_i$ is finite for each vertex $v_i$, then the list chromatic function $\chi_\Phih(M_1,\ldots,M_n)$ is obtained from $(-1)^n Q_{\Phih,M}(\bu,-1,0)$ by setting $u_{h',M'} = -|M'\cap h'|$ for each $h' \in \fW$ and $M' \subseteq \fC$.
\end{thm}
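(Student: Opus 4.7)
The plan is a direct computation that reduces everything to Proposition~\myref{L:genlist}. Starting from the definition
\[
 Q_{\Phih,M}(\bu,v,z) = \sum_{S\subseteq E} v^{|S|-n+b(S)}\, z^{c(S)-b(S)} \prod_{W\in\pib(S)} u_{h/S(W),\,M/S(W)} ,
\]
setting $z=0$ kills every $S$ with $c(S)\neq b(S)$, so only balanced $S$ survive; for such $S$ we have $\pib(S)=\pi(S)$ and $b(S)=c(S)$. Substituting $v=-1$ and $u_{h',M'}=-|h'\cap M'|$ yields
\[
 Q_{\Phih,M}(\bu,-1,0) = \sum_{S\subseteq E\,\text{balanced}} (-1)^{|S|-n+c(S)} \prod_{W\in\pi(S)} \bigl(-|h/S(W)\cap M/S(W)|\bigr).
\]
The product contributes a sign $(-1)^{c(S)}$, and the two factors of $(-1)^{c(S)}$ cancel to leave $(-1)^{|S|-n}$. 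Multiplying through by $(-1)^n$ gives
\[
 (-1)^n Q_{\Phih,M}(\bu,-1,0) = \sum_{S\subseteq E\,\text{balanced}} (-1)^{|S|} \prod_{W\in\pi(S)} |h/S(W)\cap M/S(W)| .
\]

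To conclude, I will identify this sum with $\chi_\Phih(M)$ via the sign-alternating form of Proposition~\myref{L:genlist}, namely
\[
 \chi_\Phih(M) = \sum_{S\subseteq E\,\text{balanced}} (-1)^{|S|} \prod_{W\in\pi(S)} |h/S(W)\cap M/S(W)| .
\]
This is the exact analog of the second equality in Proposition~\myref{P:finlist}, and it follows from the M\"obius-function formula there stated by grouping balanced sets $S$ according to their closures $B = \clos(S)$: within each class $\{S:\clos S = B\}$ the vertex partition $\pi(S)=\pi(B)$ and the top switching function $\eta_S=\eta_B$ are constant (since paths in $S$ and in $B$ have the same gains when both are balanced), so both $h/S(W)$ and $M/S(W)$ depend only on $B$, and the identity $\mu(\eset,B)=\sum\{(-1)^{|S|}:\clos(S)=B\}$ converts the M\"obius sum into the alternating sum over all balanced $S$.

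The computation itself is routine once the signs are tracked; the only nontrivial point is justifying the alternating-sum form, and that justification is exactly the same ``closure collapses the M\"obius sum'' argument that was used just after Proposition~\myref{P:finlist}. No new ingredients are needed.
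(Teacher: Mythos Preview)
Your proof is correct and follows exactly the approach the paper intends: the paper's own proof is the one-liner ``Like that of Theorem~\ref{T:finlist}, but from Proposition~\ref{L:genlist},'' and what you have written is simply that comparison carried out in full, including the sign bookkeeping and the passage from the M\"obius form of Proposition~\ref{L:genlist} to the alternating-sum form via $\mu(\eset,B)=\sum\{(-1)^{|S|}:\clos(S)=B\}$. No differences of substance.
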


\begin{proof}
Like that of Theorem \ref{T:finlist}, but from Proposition \ref{L:genlist}.  
\end{proof}

Fix the gain group $\fG$, color set $\fC$, and weight subsemigroup $\fW \subseteq (\cP(\fC),\cap)$.  Consider any weighted gain graph $\Phih$ with gains in $\fG$ and weights (functioning as vertex color lists) in $\fW$.  Allow any collection of color filters $(M_1,\ldots,M_n)$ for which all $h_i \cap M_i$ are finite; we call $(\Phih,M)$ a \emph{finitely filtered, list-weighted gain graph}.
This gives us a list chromatic function $\chi_\Phih(M_1,\ldots,M_n)$ that is always a well defined nonnegative integer.  Thinking of $(\Phih,M)$ as a (doubly) weighted gain graph, we have deletions and contractions and we can ask about Tutte invariance.

\begin{cor}\label{C:genlistdc}
The list chromatic function $\chi_\Phih(M_1,\ldots,M_n)$ is a signed Tutte invariant of finitely filtered, list-weighted gain graphs.
\end{cor}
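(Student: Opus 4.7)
The plan is to lift the result directly from Theorem \myref{T:genlist} and the Tutte-invariance of the doubly weighted total dichromatic polynomial $Q_{\Phih,M}$, following the same pattern as Corollary \myref{C:finlistdc}.

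First I would observe that the family of finitely filtered, list-weighted gain graphs is closed under deletion and contraction of a link. Deletion obviously preserves the weight class. For contraction, if $B \subseteq E$ and $W \in \pib(B)$, then
$$
h/B(W) \cap M/B(W) = \bigcap_{v_i \in W} (h_i \cap M_i)\eta_B(v_i) \subseteq (h_i \cap M_i)\eta_B(v_i)
$$
for any single $v_i \in W$, and the right side is finite because $\eta_B(v_i)$ acts as a bijection on $\fC$. Hence the contracted double weight again lies in $\fM_0$.

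Next, by Theorem \myref{T:dandc} applied to the weight semigroup $\fM_0$, the doubly weighted polynomial $Q_{\Phih,M}(\bu,v,z)$ is a Tutte invariant of (doubly) weighted gain graphs; in particular the evaluation $q(\Phih,M) := Q_{\Phih,M}(\bu,-1,0)$ at the substitution $u_{h',M'} = -|M' \cap h'|$ inherits multiplicativity, isomorphism invariance, unitarity, and the additive deletion–contraction identity $q(\Phih,M) = q(\Phih\setminus e,M) + q(\Phih/e,M/e)$ for every link $e$. Multiplicativity is preserved by the substitution because the substitution values depend only on the contracted weights and filters in each balanced component.

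Now set $f(\Phih,M) := (-1)^n q(\Phih,M)$, which by Theorem \myref{T:genlist} equals $\chi_\Phih(M_1,\ldots,M_n)$. Since $|V(\Phih\setminus e)| = n$ and $|V(\Phih/e)| = n-1$, the additive identity for $q$ transforms into
$$
f(\Phih,M) = (-1)^n q(\Phih\setminus e,M) + (-1)^n q(\Phih/e,M/e) = f(\Phih\setminus e,M) - f(\Phih/e,M/e),
$$
which is exactly (Ti${}^-$). Multiplicativity over components survives because if the components have orders $n_1,\ldots,n_c$ with $n_1+\cdots+n_c = n$, then $(-1)^n = \prod_k (-1)^{n_k}$. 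Isomorphism invariance and the unitary condition $f(\eset)=1$ are immediate. There is no real obstacle; the only point that needs any care is verifying the sign bookkeeping in the deletion-contraction identity, and that is a one-line calculation.
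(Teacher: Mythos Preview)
Your proof is correct and follows essentially the same approach as the paper's: verify closure of the class under deletion and contraction, then invoke Theorem~\ref{T:genlist} to recognize $\chi_\Phih(M)$ as $(-1)^n$ times an evaluation of the Tutte invariant $Q_{\Phih,M}$, and use the general remark that $(-1)^{|V|}$ times a Tutte invariant is a signed Tutte invariant. Your write-up is simply more explicit than the paper's two-line version---you spell out why the contracted double weight remains in $\fM_0$ and carry through the sign bookkeeping---but the underlying argument is identical.
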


\begin{proof}
As we noted, the class of list-weighted gain graphs with suitable arguments is closed under deletion and contraction.  
Now, apply Theorem \ref{T:genlist}.
\end{proof}

\myexam{Locally finite join semilattice}\label{X:lfposet}
Suppose $\fC$ is a locally finite join semilattice.  We may take $\fW$ to be the set of principal dual order ideals 
$$
\langle z \rangle^* := \{ y \in \fC : y \geq z \}
$$ 
and let the filters $M_i$ range over principal ideals
$$
\langle y \rangle := \{ x \in \fC : x \leq y \}.  
$$
The join operation makes $\fW$ an intersection semigroup, as the intersection $\langle z \rangle^* \cap \langle z' \rangle^*$ is the principal dual ideal $\langle z \vee z' \rangle^*$; and $\fW$ is clearly $\fG$-invariant.  The intersection $M_i \cap h_i$ will always be finite so the preceding proposition and theorem apply.  

We may weaken the assumptions.  Let $\fW$ be the class of all subsets of $\fC$ that have a lower bound; that is, subsets of principal dual ideals.  And let $M_i$ be any subset with an upper bound; that is, a subset of a principal ideal.  Then $M_i \cap h_i$ is necessarily finite so the preceding results hold good.  
That is, we admit as filters all upper-bounded subsets of $\fC$.

An example of this kind is that in which $\fC = \bbZ^d$; it is the topic of the next subsection.
\end{exam}

\subsection{Open-ended list coloring in an integer lattice}\label{intlatticelist}

To introduce the main applications we turn once again to the original example from \cite{SOA} but with a slight change in viewpoint.

\myexam{Open-ended interval coloring}\label{X:intervalcol}
The gain group is $\bbZ$ and the weights can be treated as upper intervals, $(h_i,\infty)$ at vertex $v_i$.  A \emph{proper $m$-coloration} of $\Phih$ is a function $x: V \to \bbZ$ such that each $x_i \in (h_i,\infty)$ and all $x_i \leq m$.
One can think of this as a list coloration in which the list for each vertex is an interval that grows with $m$.  
The \emph{integral chromatic function} $\chi^\bbZ_\Phih(m)$ of \cite{SOA} counts proper $m$-colorations.
This function is obtained from $Q(\bu,-1,0)$ by substituting $u_k=-\max(m-k,0)$.  Hence it is a signed Tutte invariant, as we showed in \cite{SOA} directly from its counting definition.  We showed in \cite{SOA} that it is eventually a monic polynomial of degree $n = |V|$, and that it is a sum of simple terms that appear successively as $m$ increases.
\end{exam}

We generalize this example in several ways: to higher-dimensional coloring, to upper bounds that depend on the vertex, and to arbitrary vertex lists.  

For higher-dimensional coloring the color set $\fC$ is the $d$-dimensional integer lattice $\bbZ^d$ with the componentwise partial ordering and the gain group $\fG$ is the additive group $\bbZ^d$ acting on $\fC$ by translation.  A coloration is any $\bx : V \to \bbZ^d$.  
The weight semigroup $\fW$ is either of the classes 
\begin{align*}
\fW_1 &:= \{ \H' \subset \bbZ^d : \H' \text{ is bounded below}\} 
\intertext{
---that is, $\H'$ is contained in a principal dual ideal 
$$\langle \ba \rangle^* = \bigtimes_{k=1}^{d} [a_k,\infty)$$ 
for some $\ba \in \bbZ^d$---and 
}
\fW_2 &:= \{ \H' \in \fW_1 : \text{ for some } \ba \in \bbZ^d \text{ and finite } A \subset \langle\ba\rangle^*, \ \H' = \langle \ba \rangle^* \setm A \} .
\end{align*}
Both classes, $\fW_1$ and $\fW_2$, are closed under intersection of pairs and under translation.  
For $\H' \in \fW_1$ we define $\bh'$ as the meet of the members of $\H'$---it is the largest possible $\ba$.  
When $\H'\in\fW_2$ and $d > 1$ this is the only possible $\ba$, but in dimension $d=1$ it is the smallest element of $\H'$.  

For $\H_i \in \fW_1$ define the \emph{dual-ideal complement} 
$$\bar\H_i := \langle \bh_i \rangle^* \setminus \H_i,$$ 
and let 
\begin{equation}
\hat\bh_i := \bigvee \bar\H_i,
\label{E:hath}
\end{equation}
except that $\hat\bh_i = \bh_i - (1,\ldots,1)$ if $\bar\H_i = \eset$.  Clearly, $\hat\bh_i$ is defined in $\bbZ^d$ if and only if $\bar\H_i$ is finite, that is, $\H_i \in \fW_2$.  

The color filters $M_i$ are principal order ideals 
$$
M_i = \langle \bm_i \rangle = \bigtimes_{k=1}^{d} (-\infty,m_{ik}], \text{ where } \bm_i = (m_{i1},\ldots,m_{id}) \in \bbZ^d,
$$ 
so the list chromatic function has domain $\bbZ^d$ and is defined by
\begin{align*}
\chi_\Phih(\bm) := \ &\text{the number of $\bx: V \to \bbZ^d$ such that each $\bx_i \in \H_i$},\ \bx \leq \bm, \\
&\text{and $\bx_j \neq \bx_i + \phi(e_{ij})$ for each edge $e_{ij}$} ,
\end{align*}
where $\bm := (\bm_1,\ldots,\bm_n)$.  
This number is a function of one variable $m_{ik}$ for each $i=1,\ldots,n$ and $k=1,\ldots,d$ and is finite for each $\bm$.  
Our general theory shows that $\chi_\Phih(\bm)$ is an evaluation of the total dichromatic polynomial; and now we can generalize Example \ref{X:intervalcol}.  In the present case a switching function is $\eta: V \to \bbZ^d$ and the contraction formula for weights takes the form 
$$
\H/B(W) = \bigcap_{v_i \in W} \big(\H_i+\eta_B(v_i)\big), \quad M_B(W) = \bigcap_{v_i \in W} \big(\langle \bm_i \rangle+\eta_B(v_i)\big)
$$
if $B$ is a balanced edge set and $W \in \pi(B)$, where $\H_i + \ba$ denotes the translation of $\H_i$ by $\ba$.  
In the total dichromatic polynomial there is one variable $u_{\H',\bm'}$ for each $\H' \in \fW_1$ and $\bm' \in (\bbZ^d)^n$.  (We simplify the notation $u_{\H',\langle\bm'\rangle}$ to  $u_{\H',\bm'}$.)

\begin{thm}\label{T:listQ}
With all $\H_i \in \fW_1$, the list chromatic function $\chi_\Phih(\bm)$ is obtained from $(-1)^n Q_{\Phih,M}(\bu,-1,0)$ by setting $u_{\H',\bm'} = -\big| \H' \cap \langle\bm'\rangle \big|$ for each $\H' \in \fW_1$ and $\bm' \in \bbZ^d$.
\end{thm}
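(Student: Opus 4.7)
The plan is to reduce Theorem \myref{T:listQ} directly to Theorem \myref{T:genlist}, whose conclusion is exactly the desired evaluation of the doubly weighted total dichromatic polynomial, but under the hypothesis that every filtered list $h_i \cap M_i$ is finite. The essential task is therefore to verify this finiteness condition in the current setting, so that $(\h_i,M_i)$ genuinely lies in the semigroup $\fM_0$ of finitely filtered list-weight pairs, and then to invoke Theorem \myref{T:genlist} without further calculation.

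For the first step, I would fix $i$, choose $\ba\in\bbZ^d$ with $\h_i\subseteq\langle\ba\rangle^*=\bigtimes_{k=1}^d[a_k,\infty)$ (which exists since $\h_i\in\fW_1$), and observe that
\[
 \h_i \cap \langle\bm_i\rangle \ \subseteq\ \langle\ba\rangle^*\cap\langle\bm_i\rangle \ =\ \bigtimes_{k=1}^d [a_k,m_{ik}].
\]
If any coordinate satisfies $m_{ik}<a_k$ the product is empty; otherwise it is a finite box in $\bbZ^d$. Either way $|\h_i\cap\langle\bm_i\rangle|<\infty$, so $(\h_i,M_i)\in\fM_0$ as required.

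Next I would check that this property propagates through contraction, so that every factor $u_{\h/S(W),M/S(W)}$ appearing in $Q_{\Phih,M}$ really comes from $\fM_0$. Since $\fW_1$ is closed under translation and pairwise intersection, $\h/S(W)=\bigcap_{v_i\in W}(\h_i+\eta_S(v_i))$ lies in $\fW_1$; and since each $\langle\bm_i\rangle+\eta_S(v_i)$ is again a principal ideal $\langle\bm_i+\eta_S(v_i)\rangle$, the intersection $M/S(W)$ is still bounded above componentwise (in fact equals a principal ideal determined by the componentwise $\min$). The same bounded-below/bounded-above argument as before then shows $\h/S(W)\cap M/S(W)$ is finite. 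Hence the hypotheses of Theorem \myref{T:genlist} apply to $(\Phih,M)$ verbatim, and the conclusion there is precisely the formula asserted in Theorem \myref{T:listQ}.

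I do not anticipate a substantive obstacle: the whole argument is a packaging exercise, converting the concrete geometric setup (upper cones and principal lower ideals in $\bbZ^d$) into the abstract hypothesis of Theorem \myref{T:genlist}. The only place requiring a small bit of care is confirming that translation by switching-function values preserves membership in $\fW_1$ and in the class of principal ideals, so that intersections computed during contraction stay within the doubly weighted framework; this is immediate from the componentwise structure of $\bbZ^d$ and the definition of the $\fG$-action as translation.
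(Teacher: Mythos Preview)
Your proposal is correct and follows exactly the paper's approach: the paper's proof is the single sentence ``A corollary of Theorem~\ref{T:genlist}, since the intersections $\h_i \cap M_i$ are finite.'' Your argument supplies the (easy) details behind that finiteness claim; the contraction-closure check you add is already handled in the paper's setup of $\fM_0$ just before Theorem~\ref{T:genlist}, so it is not strictly needed here, but it is correct.
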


\begin{proof}
A corollary of Theorem \ref{T:genlist}, since the intersections $\H_i \cap M_i$ are finite.
\end{proof}

\begin{cor}\label{C:listdcQ}
The list chromatic function is a signed Tutte invariant of  gain graphs with gain group $\bbZ^d$ and weight lists belonging to $\fW_1$.
\end{cor}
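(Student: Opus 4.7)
The plan is to realize $\chi_\Phih(\bm)$ as an instance of the list chromatic function treated in Corollary \myref{C:genlistdc}, so that the latter's signed Tutte invariance transfers directly. To do this I need to check three things: that $\fW_1$ is a legitimate weight semigroup under the $\bbZ^d$-action, that the filters $M_i = \langle\bm_i\rangle$ always produce finite filtered lists $\h_i\cap M_i$, and that the class is closed under deletion and the top contraction.

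First I would verify that $\fW_1$ is closed under intersection and translation: if $\h',\h''\subseteq\bbZ^d$ are bounded below by $\ba',\ba''$ respectively, then $\h'\cap\h''$ is bounded below by $\ba'\vee\ba''$, and any translate of a bounded-below set is bounded below. Since the $\bbZ^d$-action on $\fC=\bbZ^d$ is by translation, this makes $\fW_1$ a $\bbZ^d$-invariant subsemigroup of $(\cP(\bbZ^d),\cap)$. Next, because $\h_i$ is bounded below and $\langle\bm_i\rangle$ is bounded above, the intersection $\h_i\cap\langle\bm_i\rangle$ is contained in the finite box $\langle\bh_i\rangle^*\cap\langle\bm_i\rangle$, hence finite. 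Thus $(\Phih,M)$ is a finitely filtered, list-weighted gain graph in the sense of Section \myref{filtered}.

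For closure under deletion/contraction, deletion changes neither the vertex weights nor the filters. For the top contraction $\Phih/B$ by a balanced edge set $B$, each new vertex $W\in\pib(B)$ receives the intersection of translates $\bigcap_{v_i\in W}(\h_i+\eta_B(v_i))$ as its weight and $\bigcap_{v_i\in W}(\langle\bm_i\rangle+\eta_B(v_i))$ as its filter. The first intersection lies in $\fW_1$ by the closure properties just established; the second is again a principal ideal $\langle\bm/B(W)\rangle$ where $\bm/B(W)=\bigwedge_{v_i\in W}(\bm_i+\eta_B(v_i))$; and the finiteness of $(\h/B(W))\cap(M/B(W))$ follows as before since $\h/B(W)$ is bounded below and $M/B(W)$ is bounded above. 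So the class is stable under both operations.

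With these verifications, Corollary \myref{C:genlistdc} applies verbatim and gives the claim. Equivalently, one can invoke Theorem \myref{T:listQ} directly: the total dichromatic polynomial is a Tutte invariant by Theorem \myref{T:dandc}, and the prefactor $(-1)^n$ converts the additivity identity (Ti) into the subtractivity identity (Ti${}^-$) because $|V(\Phi/e)|=n-1$ for a link $e$, so $(-1)^{n-1}=-(-1)^n$; multiplicativity, isomorphism invariance, and unitarity are preserved under the sign and under the substitution $u_{\h',\bm'}=-|\h'\cap\langle\bm'\rangle|$ because both sides split multiplicatively over components. The only real issue is the bookkeeping of closure under contraction, which is essentially automatic once $\fW_1$ has been seen to be an intersection-closed, translation-invariant family; so there is no serious obstacle beyond the verifications above.
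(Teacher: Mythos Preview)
Your proposal is correct and follows the same approach as the paper, which simply records the result as a special case of Corollary~\ref{C:genlistdc}. Your explicit verifications that $\fW_1$ is intersection- and translation-closed and that $\h_i\cap\langle\bm_i\rangle$ is finite are already noted in the paper just before Theorem~\ref{T:listQ}, so they are not strictly needed here, but including them does no harm.
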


\begin{proof}
A special case of Corollary \ref{C:genlistdc}.
\end{proof}

For vertices $v_i$ and $v_j$, let 
\begin{equation}
\alpha_{ji}(\Phi) := \bigvee_{P_{ji}} \phi(P_{ji}) ,
\label{E:alphaij}
\end{equation}
where $P_{ji}$ ranges over all paths in $\Phi$ from $v_j$ to $v_i$.  

A function $p(y_1,\ldots,y_r)$ defined on $\bbR^r$ is a \emph{piecewise polynomial} if $\bbR^r$ is a union of a finite number of closed, full-dimensional sets $D_\sigma$, on each of which $p$ is a polynomial $p_\sigma(y_1,\ldots,y_r)$.  
By saying $p$ has leading term $y_1\cdots y_r$, we mean that at least one $p_\sigma$ has that leading term and every other $p_\sigma$ has degree no higher than 1 in any variable.  
This definition is chosen to suit the following result.  The numbers $h_{ik}$ are the components of the vectors $\bh_i$.  This theorem looks complicated; the reader may find Example \ref{X:order2}, after the proof, helpful.

\begin{thm}\label{T:orthotope}
Assume $\Phih$ has no balanced loops or loose edges.  
Suppose all $\H_i \in \fW_2$.  
Define
$$
q_k(B,W) := \min_{v_i \in W} \big(m_{ik}+\eta_B(v_i)_k\big) - \max_{v_i \in W} \big(h_{ik}+\eta_B(v_i)_k\big)  + 1
$$
for $B \in \Latb\Phi$, $W \in \pi(B)$, and $1 \leq k \leq d$, and
\begin{align*}
p(\bm) := \sum_{B \in \Latb\Phi} \mu(\eset,B) \prod_{W \in \pi(B)}  \Big(  \prod_{k=1}^d  q_k(B,W)  -  \big| \bigcup_{v_i \in W} \big( \bar\H_i + \eta_B(v_i) \big)  \big|  \Big) ,
\end{align*}
which is a piecewise polynomial function of the $nd$ variables $m_{ik}$, having $\prod_{i=1}^n \prod_{k=1}^d m_{ik}$ as its leading term.  The list chromatic function $\chi_\Phih(\bm)$ is a piecewise polynomial that equals $p(\bm)$ for all $\bm \in (\bbZ^d)^n$ such that $\bm \geq \bar{\hat\bm}\Phih$, where 
\begin{equation}
\bar{\hat\bm}_i\Phih := \bigvee_{j=1}^n \big( \hat\bh_j + \alpha_{ji}(\Phi) \big) \in \bbZ^d .
\label{E:barhatm}
\end{equation}
\end{thm}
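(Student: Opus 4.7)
The plan is to begin from Proposition \myref{L:genlist}, which with $\h_i \in \fW_2$ reduces the problem to evaluating each factor $|h/B(W) \cap M/B(W)|$ for $B \in \Latb\Phi$ and $W \in \pi(B)$, and then to identify this evaluation with the inner expression of $p(\bm)$ when $\bm \geq \bar{\hat\bm}\Phih$. For each $W$, I write each translated list as $\h_j + \eta_B(v_j) = \langle \bh_j + \eta_B(v_j)\rangle^* \setm (\bar\H_j + \eta_B(v_j))$ and use the identity $\bigcap_j (A_j \setm B_j) = \bigcap_j A_j \setm \bigcup_j B_j$ to obtain
\[
h/B(W) \cap M/B(W) = [\ba^*,\bm^*] \setm \bigcup_{v_i \in W}\big(\bar\H_i + \eta_B(v_i)\big),
\]
where $\ba^* := \bigvee_i(\bh_i + \eta_B(v_i))$ and $\bm^* := \bigwedge_i(\bm_i + \eta_B(v_i))$; the intersection of upper cones is an upper cone at the join, and the intersection of principal lower ideals is the principal lower ideal at the meet.

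The next step is to show that under the hypothesis $\bm \geq \bar{\hat\bm}\Phih$ the orthotope $[\ba^*,\bm^*]$ has cardinality exactly $\prod_k q_k(B,W)$ (in particular each $q_k(B,W)$ is nonnegative) and that the removed set $\bigcup_{v_i \in W}(\bar\H_i + \eta_B(v_i))$ sits inside this orthotope. Both facts rest on the identity $\eta_B(v_j) - \eta_B(v_i) = \phi(B_{v_jv_i})$ for $v_i,v_j$ in the same component of the balanced set $B$, so that $\phi(B_{v_jv_i}) \leq \alpha_{ji}(\Phi)$. From this, $\hat\bh_j + \phi(B_{v_jv_i}) \leq \hat\bh_j + \alpha_{ji}(\Phi) \leq \bar{\hat\bm}_i\Phih \leq m_i$, which gives the required upper-bound containment (and, applied with $\bh_j$ in place of $\hat\bh_j$ via the $\hat\bh_j \geq \bh_j - (1,\ldots,1)$ convention, yields $q_k(B,W) \geq 0$). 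Substituting these cardinalities into Proposition \myref{L:genlist} produces exactly the formula $p(\bm)$.

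The final step is to establish the piecewise-polynomial structure and leading term. Each $q_k(B,W)$ is piecewise linear in the variables $m_{ik}$: on the region where fixed $i^*(B,W,k) \in W$ attains the minimum of $m_{ik} + \eta_B(v_i)_k$, one has $q_k(B,W) = m_{i^*k} + \eta_B(v_{i^*})_k - \max_j(h_{jk} + \eta_B(v_j)_k) + 1$, linear in a single $m_{ik}$. Hence on each such region $\prod_k q_k(B,W)$ is polynomial in the $m_{ik}$ of total degree $d$ per component $W$, using at most $d$ of the $|W|\cdot d$ variables attached to $W$; the overall polynomial degree of the summand for $B$ is thus at most $c(B)\cdot d$. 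Because $\Phih$ has no balanced loops or loose edges, every nonempty balanced $B$ contains a link and so $c(B) \leq n-1$; only $B=\eset$ reaches degree $nd$, contributing $\prod_{i=1}^n\bigl(\prod_k(m_{ik}-h_{ik}+1) - |\bar\H_i|\bigr)$ with leading term $\prod_{i,k} m_{ik}$. The principal obstacle is the containment step for the removed set: verifying the lower-bound containment $\bar\H_j + \eta_B(v_j) \subseteq \langle\ba^*\rangle^*$, which requires arguing coordinate-by-coordinate using the structure of $\bar\H_j$ as a subset of $\langle\bh_j\rangle^*$ together with the monotone behavior of $\eta_B$ within a balanced component.
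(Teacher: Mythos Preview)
Your approach is essentially the paper's own proof: start from Proposition~\ref{L:genlist}, rewrite each factor $|h/B(W)\cap M/B(W)|$ as an orthotope $[\ba^*,\bm^*]$ with the union $\bigcup_{v_i\in W}(\bar\H_i+\eta_B(v_i))$ removed (via the same set identity you use), verify the upper-bound containment of that union using $\eta_B(v_j)-\eta_B(v_i)=\phi(B_{v_jv_i})\leq\alpha_{ji}(\Phi)$, and then establish the piecewise-polynomial structure by fixing which index realizes each minimum. Your leading-term argument (only $B=\eset$ attains $c(B)=n$) is also the paper's.

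The lower-bound containment you flag as ``the principal obstacle,'' namely $\bar\H_j+\eta_B(v_j)\subseteq\langle\ba^*\rangle^*$, is a genuine gap, and the paper's proof does not fill it either: it only verifies containment in $\langle\bm^*\rangle$ and never checks the lower side. In fact this containment can fail under the stated hypothesis. Take $d=1$, $n=2$, a single edge of gain $5$ from $v_1$ to $v_2$, $\bh_1=0$, $\bar\H_1=\{1\}$, $\bh_2=10$, $\bar\H_2=\eset$. Then $\eta_B(v_1)=5$, $\eta_B(v_2)=0$, $\ba^*=10$, and $\bar\H_1+\eta_B(v_1)=\{6\}\not\subseteq[10,\infty)$. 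Here $\bar{\hat\bm}\Phih=(4,9)$, and at $\bm=(20,20)$ one computes directly $|h/B(V)\cap M/B(V)|=|\{10,\ldots,20\}|=11$, so $\chi_\Phih(\bm)=220-11=209$, whereas the formula in $p(\bm)$ gives $220-(11-1)=210$. So the difficulty you isolated is not a technicality to be cleaned up later but a real defect shared by the paper's argument; your caution about it is well placed.
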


For the ordinary graph version of this theorem, with weights but without gains, see Corollary \ref{C:graph}.

\begin{proof}
In this theorem we have $r=nd$, the variables are $m_{11},m_{12},\ldots,m_{1d},m_{21},\ldots,m_{nd}$, and the domains $D_\sigma$ are the sets on which each of the $d$ sets (one for each fixed $k \leq d$) of shifted variables $m_k(B,W) := \min_{v_i \in W}  \big(m_{ik}+\eta_B(v_i)\big)$ (one variable for each $B$, each $W \in \pi(B)$, and each $k\leq d$) assumes a particular weakly increasing order, since the orderings of these variables determine exactly which polynomial $p(\bm)$ is.  Thus, $p(\bm)$ is definitely a piecewise polynomial.  One task is to prove that it equals $\chi_\Phih(\bm)$ for $\bm \geq \bar{\hat\bm}\Phih$.  The other, which we do first, is to prove that $\chi_\Phih(\bm)$ itself is a piecewise polynomial.

We apply the formula of Proposition \ref{L:genlist} in the form
\begin{equation}\label{E:ortholist}
\chi_\Phih(\bm) = \sum_{B \in \Latb\Phi} \mu(\eset,B) \prod_{W \in \pi(B)} \big|\H/B(W) \cap M/B(W)\big| ,
\end{equation}
which shows that the theorem is true when the range of $\bm$ is such that $|\H/B(W) \cap M/B(W)| = q_k(B,W)$. 
The right-hand factor will be made more explicit in Equation \eqref{E:factor}.

First we partially determine the values of $\bm$ for which the term of a closed, balanced set $B$ is nonzero.  Define $\bar\bm_i\Phih := \bigvee_{j=1}^n \big( \bh_j + \alpha_{ji}(\Phi) \big)$.  Note that 
$$\bar\bm\Phih = \bigvee_{B\in\Latb\Phi} \bar\bm(\Phih|B),$$ 
since every path $P_{ji}$ is contained in some balanced set $B$.  Restrict $\bar\bm_i(\Phih|B)$ to $W$ as $\bar\bm(\Phih|B)|_W$.

\begin{lem}\label{L:orthozero}
In the sum of Equation \eqref{E:ortholist}, the term of $B$ is zero unless $\bm \geq \bar\bm(\Phih|B)$.
\end{lem}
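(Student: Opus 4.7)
The plan is to show that if $\bm \not\geq \bar\bm(\Phih|B)$, then at least one factor $\big|\h/B(W) \cap M/B(W)\big|$ in Equation \eqref{E:ortholist} vanishes, so the entire term for $B$ is zero.

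First I would unpack the hypothesis. By definition $\bar\bm_i(\Phih|B) = \bigvee_{j=1}^n (\bh_j + \alpha_{ji}(\Phi|B))$, and since $B$ is balanced, for $v_i, v_j$ in the same component $W \in \pi(B)$ the gain $\alpha_{ji}(\Phi|B)$ reduces to the common path-gain $\phi(B_{v_j v_i})$; when $v_i, v_j$ lie in different components there is no such path, so that index contributes nothing to the join. Thus $\bm \not\geq \bar\bm(\Phih|B)$ produces indices $i$, $k$ and a vertex $v_j$ in the same component $W$ as $v_i$ for which
$$
m_{ik} < h_{jk} + \phi(B_{v_j v_i})_k.
$$

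Next I would use the bounds coming from $\h_j \in \fW_2 \subseteq \fW_1$, namely $\h_j \subseteq \langle \bh_j \rangle^*$, together with $M_j = \langle \bm_j \rangle$. Any $\bz \in \h/B(W) \cap M/B(W)$ lies in both $\h_j + \eta_B(v_j)$ and $\langle \bm_i \rangle + \eta_B(v_i)$, hence coordinatewise in $\bbZ^d$ satisfies
$$
\bh_j + \eta_B(v_j) \leq \bz \leq \bm_i + \eta_B(v_i).
$$
Reading off the $k$th coordinate gives $m_{ik} \geq h_{jk} + \eta_B(v_j)_k - \eta_B(v_i)_k$.

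Finally I would invoke Equation \eqref{E:etatransfer} in additive form, $\eta_B(v_j) - \eta_B(v_i) = \phi(B_{v_j v_i})$, which holds because $v_i$ and $v_j$ are in the same component of the balanced set $B$ and $\eta_B$ is a switching function for $B$. This transforms the previous inequality into $m_{ik} \geq h_{jk} + \phi(B_{v_j v_i})_k$, directly contradicting the consequence of the hypothesis. Hence no such $\bz$ exists, the factor for $W$ is zero, and the whole term vanishes.

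I expect the only subtle point is the handling of $\alpha_{ji}(\Phi|B)$ when $v_i, v_j$ lie in different components of $B$: one must either set such $\alpha_{ji}$ to the least element (so it drops out of the join) or restrict the join to same-component indices. Once that convention is fixed the inequalities line up immediately through the relation $\eta_B(v_j) - \eta_B(v_i) = \phi(B_{v_j v_i})$, and no further computation is needed.
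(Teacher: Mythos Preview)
Your argument is correct and follows essentially the same route as the paper's proof: both derive from any point of $\h/B(W)\cap M/B(W)$ the coordinatewise inequality $\bh_j+\eta_B(v_j)\leq \bm_i+\eta_B(v_i)$, then invoke the switching relation $\eta_B(v_j)-\eta_B(v_i)=\phi(B_{v_jv_i})$ to convert this into the bound $\bm\geq\bar\bm(\Phih|B)$. The only difference is presentational: the paper first works out the full description of $\h/B(W)\cap M/B(W)$ as a lattice orthotope with finitely many points removed (its Equation~\eqref{E:factor}), since that formula is reused later in the proof of Theorem~\ref{T:orthotope}, whereas you argue directly with a generic element~$\bz$.
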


\begin{proof}
Rewrite the expression for the factor of $W$ in the term of $B$, with $[\bx,\by]$ denoting an interval in the lattice $\bbZ^d$ and $\bm_i+\eta_B(v_i)$ meaning that $\bm_i$ is translated by $\eta_B(v_i)$:
\begin{equation}\label{E:factor}
\begin{aligned}
\H/B(W) \cap M/B(W)
 &=  \bigcap_{v_i \in W} (\H_i+\eta_B(v_i))  \cap  \bigcap_{v_i \in W} \langle \bm_i+\eta_B(v_i) \rangle \\
 &=  \bigcap_{v_i \in W} \big( [\langle \bh_i \rangle^* \setm \bar\H_i]+\eta_B(v_i) \big)  \cap  \bigcap_{v_i \in W} \big( \langle \bm_i \rangle+\eta_B(v_i)\big) \\
 &=  \bigcap_{v_i \in W} \Big( \big( [ \bh_i, \bm_i ] \setm \bar\H_i \big) +\eta_B(v_i) \Big) \\
 &=  \bigcap_{v_i \in W} \Big( [ \bh_i, \bm_i ] +\eta_B(v_i) \Big) \setminus \bigcup_{v_i \in W} \big( \bar\H_i +\eta_B(v_i) \big)  \\
 &=  \bigcap_{v_i \in W}  [ \bh_i+\eta_B(v_i), \bm_i+\eta_B(v_i) ]  \setminus \bigcup_{v_i \in W} \big( \bar\H_i +\eta_B(v_i) \big)  \\
 &=   \big[ \bigvee_{v_i \in W} \big( \bh_i+\eta_B(v_i) \big), \bigwedge_{v_i \in W} \big( \bm_i+\eta_B(v_i) \big) \big]   \setm  \bigcup_{v_i \in W} \big( \bar\H_i + \eta_B(v_i) \big) 
\end{aligned}
\end{equation}
since $\H_i = \langle \bh_i \rangle^* \setm \bar\H_i$ and $M_i = \langle \bm_i \rangle$.  The interval in the last line has another expression:
\begin{align*}
&\big[ \bigvee_{v_i \in W} \big(\bh_i+\eta_B(v_i)\big), \bigwedge_{v_i \in W} \big(\bm_i+\eta_B(v_i)\big) \big] \\
&= \bigtimes_{k=1}^d \big[\max_{v_i\in W}\big(h_{ik}+\eta_B(v_i)_k\big), \min_{v_i\in W}\big(m_{ik}+\eta_B(v_i)_k\big)\big],
\end{align*}
which is an orthotope that plainly contains $\prod_{k=1}^d q_k(B,W)$ lattice points if it is not empty.

Let us see, therefore, what the conditions are under which the orthotope is nonempty.  We need $\bh_i+\eta_B(v_i) \leq \bm_j+\eta_B(v_j)$ for all $v_i,v_j \in W$.  That means $\bm_j \geq \bh_i + \eta_B(v_i) - \eta_B(v_j).$
Now recall from Equation \eqref{E:etatransfer} that $\eta_B(v_j) - \eta_B(v_i) = \phi(B_{ji})$.  Here $B_{ji}$ is any path in $B$ from $v_j$ to $v_i$.  
Thus, the condition on $\bm_j$ is that $\bm_j \geq \bh_i + \alpha_{ji}(B\ind W)$, which is satisfied for all $v_j \in W$ if and only if 
$\bm|_W \geq \bar\bm(\Phih|B)|_W.$
If this is not satisfied for all $W \in \pi(B)$, that is, if $\bm \not\geq \bar\bm(\Phih|B)$, then the orthotope is empty and the term of $B$ disappears.  
\end{proof}

If $\bm \geq \bar\bm(\Phih|B)$, the term of $B$ may still equal 0 if the orthotope is contained in $\bigcup_{v_i \in W} \big( \bar\H_i + \eta_B(v_i) \big)$; therefore, the converse of the lemma is not valid.

The lemma implies that terms of $\chi_\Phih(\bm)$ appear gradually as $\bm$ gets larger.  That generalizes the behavior of the corresponding function for $d=1$ (Example \ref{X:intervalcol}), proved in \cite{SOA}.  The condition for all terms to appear is that $\bm \geq \bigvee_B \bar\bm(\Phih|B) = \bar\bm\Phih.$

Next, we prove piecewise polynomiality of $\chi_\Phih(\bm)$ (with the specified leading term).  Indeed, each factor in each term is piecewise linear.  

Consider first $W = \{v_i\}$, which is a component when $B = \eset$.  (Then $\eta_B(v_i) = 0 \in \bbR^d$.)  The corresponding factor is 
\begin{equation}\label{E:vertexfactor}
\begin{aligned}
\big| \H(\{v_i\}) \cap M(\{v_i\}) \big| &= \big|  \big( [\bh_i, \bm_i] + \eta_B(v_i) \big)  \setm  \big( \bar\H_i + \eta_B(v_i) \big)  \big| 
 =  \big|  [\bh_i, \bm_i]  \setm  \bar\H_i  \big| \\
 &\overset?= \big|  [\bh_i, \bm_i]  \big|  -  \big| \big( \bar\H_i \cap \langle \bm_i \rangle \big)  \big| \\
 &=  \prod_{k=1}^d  (m_{ik} - h_{ik} + 1)^+  -  \big| \bar\H_i \cap \langle \bm_i \rangle \big| .
\end{aligned}
\end{equation}
When $\bm_i \geq \hat\bh_i$, the questionable equality $\overset?=$ is equality and moreover the right-hand side is a multilinear function with leading term $\prod_{i=1}^n \prod_{k=1}^d m_{ik}$, because then $\bar\H_i \subseteq \langle \bm_i \rangle$ so $| \bar\H_i \cap \langle \bm_i \rangle | = |\bar\H_i|$.
Furthermore, for any $\bm$, $\bar\H_i \cap \langle \bm_i \rangle$ is a finite subset $S \subseteq \bar\H_i$, and for each $S$, $ | [\bh_i, \bm_i]  \setm  \big( \bar\H_i \cap \langle \bm_i \rangle \big) | = | [\bh_i, \bm_i] |  -  | S | $, which is a multilinear function of $\bm$.  There are only finitely many subsets $S$, hence $(\bbZ^d)^n$ is the union of a finite number of domains, on each of which $\chi_\Phih(\bm)$ is multilinear.  It follows that the term of $B=\eset$ is piecewise polynomial on $(\bbR^d)^n$ and has the correct leading term.

Treating in the same manner each factor in Equation \eqref{E:factor}, rewritten as 
\begin{equation}\begin{aligned}\label{E:setfactor}
\H/B(W) \cap M/B(W)
 &=   \big[ \bigvee_{v_i \in W} \big(\bh_i+\eta_B(v_i)\big), \bigwedge_{v_i \in W} \big(\bm_i+\eta_B(v_i)\big) \big]  \setm  S 
\end{aligned}\end{equation}
where $S :=  \bigcup_{v_i \in W} \big( \bar\H_i + \eta_B(v_i) \big) \cap \bigcap_{v_i \in W} \big\langle \bm_i+\eta_B(v_i) \big\rangle ,$ 
we see that $| \H/B(W) \cap M/B(W) |$ is similarly piecewise linear since there are only finitely many possible sets $S$ and each one is finite.  When $S = \bigcup_{v_i \in W} \big( \bar\H_i + \eta_B(v_i) \big)$, i.e., when $\bm_i+\eta_B(v_i) \geq \bh_j+\eta_B(v_j)$ for all $v_i,v_j \in W$, the formula is 
\begin{equation*}\begin{aligned}
\big| \H/B(W) \cap M/B(W) \big|
 &=   \big| \big[ \bigvee_{v_i \in W} \big(\bh_i+\eta_B(v_i)\big), \bigwedge_{v_i \in W} \big(\bm_i+\eta_B(v_i)\big) \big] \big|  -  \big| S \big| \\
 &=  q_k(B,W) -  \big| \bigcup_{v_i \in W} \big( \bar\H_i + \eta_B(v_i) \big) \big| .
\end{aligned}\end{equation*}
Recall from Equation \eqref{E:etatransfer} that $\eta_B(v_j) - \eta_B(v_i) = \phi(B_{ji})$, where $B_{ji}$ is a path in $B$ from $v_j$ to $v_i$.  
We can choose $B$ to contain any desired path $P_{ji}$ from $v_j$ to $v_i$ in $\Phi$ by letting $B$ be a balanced set containing $P_{ji}$.  Thus, the condition on $\bm_j$ is that $\bm_j \geq \bh_j + \bigvee_{P_{ji}} \phi(P_{ji}) = \bh_j + \alpha_{ji}(\Phi)$, which is satisfied for all $v_j \in V$ if and only if $\bm \geq \bar\bm\Phih.$

We have proved that $\chi_\Phih(\bm)$ is a piecewise polynomial function.  The term of highest degree is that for which $\pi(B)$ has the most components, which is $n$ components when $B = \eset$.  The term of each balanced, closed set $B$ is a monic piecewise polynomial of total degree $dc(B)$.
The previous analysis shows that the natural sufficient condition for $\chi_\Phih(\bm)$ to equal $p(\bm)$ is that $S = \bar\H_i \cap \langle\bm_i\rangle$ in every factor of every term, i.e., $\bm_i \geq \bar{\hat\bm}_i\Phih$.
Thus, to make the chromatic function equal to $p(\bm)$ it suffices that $\bm \geq \bar{\hat\bm}\Phih$.  
\end{proof}

The proof suggests that the theorem's lower bound on $\bm$ is essential; for any other choice of $\bm$, $p(\bm)$ will not agree with $\chi_\Phih(\bm)$.  The main reason is that the constant term $\big| \bigcup_{v_i \in W} \big( \bar\H_i + \eta_B(v_i) \big)  \big|$ in the factor of $W$ in $p(\bm)$ assumes that $\bigcup_{v_i \in W} \big( \bar\H_i + \eta_B(v_i) \big)$ is contained in $\big\langle \bigwedge_{v_i \in W} \big(\bm_i+\eta_B(v_i)\big) \big\rangle$.   However, we have not tried to prove necessity of the lower bound, and there might be exceptions.

It is clear, though, why $\H_i$ has to be bounded below and its dual-ideal complement $\bar\H_i$ must be finite.  If some $\H_i$ has no lower bound, then $\chi_\Phih(m)$ will be infinite.  Even when each $\H_i$ is bounded below, if some $\bar\H_i$ is infinite it has no upper bound; then $\chi_\Phih(m)$ will not settle down to a finite system of polynomials for large values $\bm$.

\begin{exam}\label{X:order2}
We do a very small example, just large enough to show the phenomena that appear in computing $\chi_\Phih(\bm)$.  
This example has gain graph $\Phi$ with vertex set $V=\{v_1,v_2\}$ ($n=2$ vertices), so it represents hyperplanes in $\bbR^2$.  The gain group and color set are $\bbZ^2$ ($d=2$).  
The edge set is 
$$
E = \{e_1=(0,0)(v_1,v_2),\ e_2=(2,0)(v_1,v_2),\ e_3=(-1,2)(v_1,v_2)\}.
$$  
The variables are four, in two lattice points: $\bm=(\bm_1,\bm_2)$, where $\bm_1=(m_{11},m_{12})$ and $\bm_2=(m_{21},m_{22})$.  We choose 
$$\bh_1=(1,0),\ \bh_2=(0,3),\ \bar\H_1=\{\bc_1\},\ \bar\H_2=\{\bc_2\},$$
where $\bc_i \in \bbZ^2$ satisfies $\bc_i\geq\bh_i$.  
Thus, $\hat\bh_i=\bigvee\bar\H_i=\bc_i$ and $\bar{\hat\bm}_i = [\hat\bh_1+\alpha_{1i}] \vee [\hat\bh_2+\alpha_{2i}]$ for $i=1,2$.  
(For the definition of ${\hat\bh}_i$ see Equation \eqref{E:hath}.  For that of $\bar{\hat\bm}_i$ see \eqref{E:barhatm}.)  
We evaluate the $\alpha_{ji}$ from Equation \eqref{E:alphaij}:
\begin{equation*}
\begin{array}{ll}
\alpha_{11} = \bigvee \phi(P_{11}) = (0,0),	&\alpha_{12} = \phi(e_1) \vee \phi(e_2) \vee \phi(e_3) = (2,2), \\[6pt]
\alpha_{21} = \phi(e_1\inv) \vee \phi(e_2\inv) \vee \phi(e_3\inv) = (1,0),	&\alpha_{22} = \bigvee \phi(P_{22}) = (0,0).
\end{array}
\end{equation*}
Therefore, 
\begin{equation}
\begin{aligned}
\bar{\hat\bm}_1 
&= (\bc_1+(0,0)) \vee (\bc_2+(1,0)) = (\max(c_{11},c_{21}+1), \max(c_{12},c_{22})),
\\
\bar{\hat\bm}_2 
&= (\bc_1+(2,2)) \vee (\bc_2+(0,0)) = (\max(c_{11}+2,c_{21}), \max(c_{12}+2,c_{22})).
\end{aligned}
\label{E:Xorder2bhm}
\end{equation}

Recall the notations $x^+, \bx^+, x^-, \bx^-$ from the beginning of Section \ref{defs}.  
We employ the zeta function of $\bbZ^2$ defined by $\zeta(\bx,\by) := 1$ if $\bx\leq\by$, otherwise 0; and 
the set membership function $\epsilon(a,S) := 1$ if $a \in S$, otherwise 0.

To evaluate $\chi_\Phih(\bm)$ we use the combination of Equations \eqref{E:ortholist} and \eqref{E:factor}.  We first find the factor of each $W\in\pi(\eset)$.  For $W = \{v_1\}$ the factor is  
$| [\bh_1,\bm_1] \setm \{\bc_1\} |.$
For $W = \{v_2\}$ it is  
$| [\bh_2,\bm_2] \setm \{\bc_2\} |.$
Hence the term of $B=\eset$ is the product:
\begin{equation}\begin{aligned}\label{E:eset}
&\mu(\eset,\eset) \cdot \big| [\bh_1,\bm_1] \setm \{\bc_1\} \big| \cdot \big| [\bh_2,\bm_2] \setm \{\bc_2\} \big| \\
&= 1 \cdot \big( (m_{11}-h_{11}+1)^+(m_{12}-h_{12}+1)^+ - \epsilon(\bc_{1},[\bh_{1},\bm_{1}]) \big)^+  \\
&\qquad \cdot  \big( (m_{21}-h_{21}+1)^+(m_{22}-h_{22}+1)^+ - \epsilon(\bc_{2},[\bh_{2},\bm_{2}]) \big)^+ \\
&= \big( m_{11}^+(m_{12}+1)^+ - \epsilon(\bc_{1},[(1,0),\bm_{1}]) \big)^+ \\
&\qquad \cdot  \big( (m_{21}+1)^+(m_{22}-2)^+ - \epsilon(\bc_{2},[(0,3),\bm_{2}]) \big)^+ .
\end{aligned}\end{equation}
The term of $B=\{e_i\}$, consisting of any one edge $e_i=\bg(v_1,v_2)$, requires computing $\eta_B(v_i)$ by means of Lemma \ref{L:top}.  Thus, 
\begin{align*}
\eta_B(v_1)&=\phi(B_{v_1v_1})\vee\phi(B_{v_1v_2}) = (0,0)\vee \bg = \bg^+, \\
\eta_B(v_2)&=\phi(B_{v_2v_1})\vee\phi(B_{v_2v_2}) = (-\bg)\vee(0,0) = \bg^-.
\end{align*}
The term of this set $B$ is therefore 
\begin{align*}
&\mu(\eset,\{e_i\}) \left| \big[ (\bh_1+\bg^+)\vee(\bh_2+\bg^-), (\bm_1+\bg^+)\wedge(\bm_2+\bg^-) \big] \setm \{ \bc_1+\bg^+, \bc_2+\bg^- \} \right| \\
&= (-1) \left| \big[ ( \max(h_{11}+g_1^+, h_{21}+g_1^-), \max(h_{12}+g_2^+, h_{22}+g_2^-) ), \right.\\
&\qquad\qquad\quad ( \min(m_{11}+g_1^+,m_{21}+g_1^-), \min(m_{12}+g_2^+,m_{22}+g_2^-) ) \big] \\
&\qquad\qquad\left. \setm\ \{ \bc_1+\bg^+, \bc_2+\bg^- \} \right| \\
&= - \big\{ \big( \min(m_{11}+g_1^+,m_{21}+g_1^-) - \max(h_{11}+g_1^+,h_{21}+g_1^-) + 1 \big)^+ \\
&\qquad\quad\quad \cdot \big( \min(m_{12}+g_2^+,m_{22}+g_2^-) - \max(h_{12}+g_2^+,h_{22}+g_2^-) + 1 \big)^+ \\
&\qquad\quad  - \epsilon(\bc_1+\bg^+,[ (\bh_1+\bg^+)\vee(\bh_2+\bg^-), (\bm_1+\bg^+)\wedge(\bm_2+\bg^-) ]) \\
&\qquad\quad  - \epsilon(\bc_2+\bg^-,[ (\bh_1+\bg^+)\vee(\bh_2+\bg^-), (\bm_1+\bg^+)\wedge(\bm_2+\bg^-) ]) 
\big\}  \\
&= - \big\{ \big( \min(m_{11},m_{21}-g_1) - \max(h_{11},h_{21}-g_1) + 1 \big)^+ \\
&\qquad\quad\quad \cdot \big( \min(m_{12},m_{22}-g_2) - \max(h_{12},h_{22}-g_2) + 1 \big)^+ \\
&\qquad\quad  - \epsilon(\bc_1,[ \bh_1\vee(\bh_2-\bg), \bm_1\wedge(\bm_2-\bg) ]) \\
&\qquad\quad  - \epsilon(\bc_2,[ (\bh_1+\bg)\vee\bh_2, (\bm_1+\bg)\wedge\bm_2 ]) 
\big\}.
\end{align*}
For instance, if $\bg=(0,0)$ with our choices of the $\bh_i$, the term of $B = \{(0,0)(v_1,v_2)\}$ is 
\begin{equation}\begin{aligned}\label{E:00e12}
& - \big\{ \big( \min(m_{11},m_{21}) \big)^+  \cdot \big( \min(m_{12},m_{22}) - 2 \big)^+ \\
&\qquad\quad  - \epsilon(\bc_1,[(1,3), \bm_1\wedge\bm_2 ]) - \epsilon(\bc_2,[(1,3), \bm_1\wedge\bm_2 ]) 
\big\}.
\end{aligned}\end{equation}
If $\bg=(2,0)$, the term of $B = \{(2,0)(v_1,v_2)\}$ is 
\begin{equation}\begin{aligned}\label{E:20e12}
& - \big\{ \big( \min(m_{11},m_{21}-2) - \max(1,-2) + 1 \big)^+ \cdot \big( \min(m_{12},m_{22}) - \max(0,3) + 1 \big)^+ \\
&\qquad  - \epsilon(\bc_1,[ (1,0)\vee((0,3)-(2,0)), \bm_1\wedge(\bm_2-(2,0)) ]) \\
&\qquad  - \epsilon(\bc_2,[ ((1,0)+(2,0))\vee(0,3), (\bm_1+(2,0))\wedge\bm_2 ]) 
\big\} \\
&= - \big\{ \big( \min(m_{11},m_{21}-2) \big)^+ \cdot \big( \min(m_{12},m_{22}) - 2 \big)^+ \\
&\qquad\quad  - \epsilon(\bc_1,[ (1,3), \bm_1\wedge(\bm_2-(2,0)) ]) - \epsilon(\bc_2,[ (3,3), (\bm_1+(2,0))\wedge\bm_2 ]) 
\big\} .
\end{aligned}\end{equation}
If $\bg=(-1,2)$, the term of $B = \{(-1,2)(v_1,v_2)\}$ is 
\begin{equation}\begin{aligned}\label{E:-12e12}
&- \big\{ \big( \min(m_{11},m_{21}+1) \big)^+ \cdot \big( \min(m_{12},m_{22}-2) \big)^+ \\
&\qquad  - \epsilon(\bc_1,[ (1,1), \bm_1\wedge(\bm_2-(-1,2)) ]) \\
&\qquad  - \epsilon(\bc_2,[ (0,3), (\bm_1+(-1,2))\wedge\bm_2 ]) 
\big\}.
\end{aligned}\end{equation}
The degree of each term in each variable $m_{ij}$ is 0 or 1, depending on how the shifted value of $m_{ij}$ is related to the shifted values of other variables.  

Now we apply these calculations to evaluate the list chromatic function:  
\begin{equation*}\begin{aligned}\label{E:order2}
\chi_\Phih(\bm) &= \eqref{E:eset} + \eqref{E:00e12} + \eqref{E:20e12} + \eqref{E:-12e12} .
\end{aligned}\end{equation*}
As a function of $\bm$, this consists of different polynomials in different domains, depending in part on the relative ordering of the variables $m_{ij}$ and shifted variables $m_{11}+2$, etc.

For \emph{large values} of all the $m_{ij}$, $\chi_\Phih(\bm)$ has several domains on which it is different polynomials, all of which have leading term $m_{11}m_{12}m_{21}m_{22}$.  We are assuming specifically that all the minima of translated $m_{ij}$ are larger than all the maxima of translated $h_{ij}$ in the preceding formulas.  In terms of $\bar{\hat\bm}\Phih$, we are assuming 
$\bm_1 \geq \bar{\hat\bm}_1$ and $\bm_2 \geq \bar{\hat\bm}_2$, lower bounds whose values were computed in Equation \eqref{E:Xorder2bhm}.  
Every term of $\chi_\Phih(\bm)$ will be linear in each of the $m_{ij}$.  The first term is
\begin{equation*}\begin{aligned}
\eqref{E:eset}
&= \big( (m_{11}-h_{11}+1)(m_{12}-h_{12}+1) - \epsilon(\bc_{1},\langle \bh_1 \rangle^*) \big)  \\
&\qquad \cdot  \big( (m_{21}-h_{21}+1)(m_{22}-h_{22}+1) - \epsilon(\bc_{2},\langle \bh_2 \rangle^*) \big) \\
&= \big( m_{11}(m_{12}+1) - \zeta((1,0),\bc_1) \big) \cdot  \big( (m_{21}+1)(m_{22}-2) - \zeta((0,3),\bc_2) \big) 
\end{aligned}\end{equation*}
(note that $\zeta(\by,\bx)=\epsilon(\bx,\langle\by\rangle^*)$), which shows (since the $h_{ij}$ and the $\epsilon$'s are independent of $\bm$) that, no matter how the shifted and unshifted variables are related, the term $m_{11}m_{12}m_{21}m_{22}$ exists; it will be the leading term since the remaining terms of $\chi_\Phih(\bm)$ have total degree 2.  The remaining terms depend on how the variables are ordered.  For instance, 
\begin{equation*}\begin{aligned}
\eqref{E:00e12} 
&=  \big( \min(m_{11},m_{21}) - \max(h_{11},h_{21}) + 1 \big) 
\big( \min(m_{12},m_{22}) - \max(h_{12},h_{22}) + 1 \big) \\
&\quad  - \epsilon(\bc_1,\langle \bh_1\vee\bh_2 \rangle^*) 
  - \epsilon(\bc_2,\langle \bh_1\vee\bh_2 \rangle^*) \\
&= \left. \begin{cases}
  \begin{aligned}
  & m_{11}  \big( m_{12} - 2 \big) 
  \end{aligned} 
&\text{ if } m_{11}\leq m_{21},\ m_{12}\leq m_{22}, \\
  \begin{aligned}
  & m_{21}  \big( m_{12} - 2 \big) 
  \end{aligned}  
&\text{ if } m_{11}\geq m_{21},\ m_{12}\leq m_{22}, \\
  \begin{aligned}
  & m_{11}  \big( m_{22} - 2 \big)
  \end{aligned}  
&\text{ if } m_{11}\leq m_{21},\ m_{12}\geq m_{22}, \\
  \begin{aligned}
  & m_{21}  \big( m_{22} - 2 \big) 
  \end{aligned}  
&\text{ if } m_{11}\geq m_{21},\ m_{12}\geq m_{22}
\end{cases} \right\} \\
&\quad - \zeta((1,3),\bc_1) - \zeta((1,3),\bc_2) .
\end{aligned}\end{equation*}
In each of these four domains governed by the relative values of the $m_{ij}$, $\chi_\Phih(\bm)$ is a different polynomial that is linear in all four variables and has total degree 4.  That concludes the example.
\end{exam}

Theorem \ref{T:orthotope} simplifies when all $\bm_i$ equal a common value $\bm' \in \bbZ^d$.  Define
$$
\alpha_j(\Phi) := \bigvee_i \alpha_{ji}(\Phi),
$$ 
the least upper bound of the gains of all paths that begin at $v_j$.  

\begin{cor}\label{C:onebound}
For a $\bbZ^d$-gain graph $\Phi$ with no balanced loops or loose edges, 
suppose all weights $\H_i \in \fW_2$.  
Define
$$
\bar q_k(B,W) := m'_k - \max_{v_i, v_j \in W} \big( h_{ik}+\eta_B(v_i)_k-\eta_B(v_j)_k \big)  + 1
$$
for $B \in \Latb\Phi$, $W \in \pi(B)$, and $\bm' \in \bbZ^d$.  
For large enough $\bm' \in\bbZ^d$, the list chromatic function $\chi_\Phih(\bm',\ldots,\bm')$ equals 
\begin{align*}
\bar p(\bm') := \sum_{B \in \Latb\Phi} \mu(\eset,B) \prod_{W \in \pi(B)}  &\bigg(  \prod_{k=1}^d  \bar q_k(B,W)  -  \Big| \bigcup_{v_i \in W} \big( \bar\H_i + \eta_B(v_i) \big)  \Big|  \bigg) ,
\end{align*}
which is a polynomial function of the $d$ variables $m'_k$ having degree at most $n$ in each variable and leading term $\prod_{k=1}^d (m'_k)^n$.  The equation $\chi_\Phih(\bm',\ldots,\bm') = \bar p(\bm')$ holds true for all 
$$
\bm' \geq \bigvee_{j=1}^n \big( \hat\bh_j + \alpha_j(\Phi) \big) .
$$
\end{cor}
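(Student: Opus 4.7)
The plan is to obtain the corollary as the restriction of Theorem \myref{T:orthotope} to the diagonal $\bm_1 = \cdots = \bm_n = \bm'$, together with a verification that the piecewise structure collapses to a single polynomial on the stated region. I would proceed in four steps.

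\emph{Step 1: Collapse $q_k$ to $\bar q_k$.} The gain group is $\bbZ^d$ with componentwise order, and the top switching property (Lemma \myref{L:top} and Section \myref{orderedgains}) says $\bigwedge_{v_i\in W}\eta_B(v_i)=\1$ for each $W\in\pi(B)$, so $\min_{v_i\in W}\eta_B(v_i)_k=0$ coordinatewise. Plugging $\bm_i=\bm'$ into the formula for $q_k(B,W)$ of Theorem \myref{T:orthotope} gives
\[
q_k(B,W) = m'_k + \min_{v_i\in W}\eta_B(v_i)_k - \max_{v_i\in W}(h_{ik}+\eta_B(v_i)_k) + 1 = m'_k - \max_{v_i\in W}(h_{ik}+\eta_B(v_i)_k) + 1.
\]
On the other hand
\[
\max_{v_i,v_j\in W}(h_{ik}+\eta_B(v_i)_k-\eta_B(v_j)_k) = \max_{v_i\in W}(h_{ik}+\eta_B(v_i)_k) - \min_{v_j\in W}\eta_B(v_j)_k = \max_{v_i\in W}(h_{ik}+\eta_B(v_i)_k),
\]
so $q_k(B,W)=\bar q_k(B,W)$. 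The constant-term factor $\big|\bigcup_{v_i\in W}(\bar\H_i+\eta_B(v_i))\big|$ is unaffected by the substitution, so $p(\bm',\dots,\bm')=\bar p(\bm')$ term-by-term.

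\emph{Step 2: Match the lower bound.} Under $\bm_i=\bm'$, the hypothesis $\bm\geq\bar{\hat\bm}\Phih$ of Theorem \myref{T:orthotope} is equivalent to $\bm'\geq\bar{\hat\bm}_i\Phih$ for every $i$, and
\[
\bigvee_{i=1}^{n}\bar{\hat\bm}_i\Phih = \bigvee_{i,j}\bigl(\hat\bh_j+\alpha_{ji}(\Phi)\bigr) = \bigvee_{j=1}^n\bigl(\hat\bh_j+\alpha_j(\Phi)\bigr),
\]
which is the bound stated in the corollary.

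\emph{Step 3: Single polynomial, not piecewise.} The pieces in Theorem \myref{T:orthotope} come from the orderings of the shifted quantities $m_{ik}+\eta_B(v_i)_k$. After the substitution these become $m'_k+\eta_B(v_i)_k$, so their relative orderings depend only on the fixed constants $\eta_B(v_i)_k$, not on $\bm'$. Hence for every $(B,W,k)$ both the argmin and the argmax are determined, and $\bar p(\bm')$ is a single polynomial throughout the region $\bm'\geq\bigvee_j(\hat\bh_j+\alpha_j(\Phi))$. This is the only step that requires any care; once one recognizes that the only $\bm$-dependence in each $\min$ or $\max$ is through a common translate of $\bm'$, the collapse is automatic.

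\emph{Step 4: Degree and leading term.} Each factor $\bar q_k(B,W)$ is linear in $m'_k$, so the $W$-product in the $B$-term has degree $|\pi(B)|=c(B)$ in each variable $m'_k$. The maximum value $c(B)=n$ is attained only at $B=\eset$, for which $\mu(\eset,\eset)=1$, $\pi(\eset)=\{\{v_1\},\ldots,\{v_n\}\}$, and $\eta_\eset\equiv\1$, giving the $B=\eset$ term
\[
\prod_{i=1}^n\Bigl(\prod_{k=1}^d(m'_k-h_{ik}+1)-|\bar\H_i|\Bigr),
\]
whose top monomial is $\prod_{k=1}^d(m'_k)^n$. Every other $B$ contributes a polynomial of degree at most $n-1$ in each $m'_k$, so this monomial is the unique leading term. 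Combined with Steps 1--3, this completes the proof. \qed
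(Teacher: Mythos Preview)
Your proposal is correct and follows exactly the route the paper intends: the corollary is stated as a direct specialization of Theorem \ref{T:orthotope} to the diagonal $\bm_1=\cdots=\bm_n=\bm'$, and the paper gives no separate proof. Your Steps 1--4 supply precisely the details that the paper leaves implicit, including the key observation (Step 3) that the piecewise structure disappears because the relative order of the shifted quantities $m'_k+\eta_B(v_i)_k$ no longer depends on $\bm'$.
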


In the one-dimensional case, where the gain group and color set are $\bbZ$, then 
$$
\fW_2 = \{ \H' \subset \bbZ : \H' \text{ is bounded below and } \bbZ \setm \H' \text{ is bounded above}\}
$$
and $\bm = (m_1,\ldots,m_n) \in \bbZ^n$.  
In this case $\bh_i = \min \H_i$ and $\hat\bh_i = \max(\bbZ \setm \H_i)$; and $\alpha_{ji}(\Phi)$ is the largest gain of a path in $\Phi$ from $v_j$ to $v_i$.

\begin{cor}\label{C:orthotope1}
For an integral gain graph without balanced loops or loose edges, and with all weights $\H_i \in \fW_2$, $\chi_\Phih(m_1,\ldots,m_n)$ is a monic polynomial in the $n$ variables $m_i$ for large enough $m_i$'s, linear in each variable and with leading term $m_1 \cdots m_n$.  Polynomiality holds when all 
$$
m_i \geq m_{0i} := \max_{j=1,\ldots,n} \big[ \max(\bbZ \setm \H_j) + \alpha_{ji}(\Phi) \big] .
$$
\end{cor}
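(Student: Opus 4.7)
The plan is to deduce Corollary \myref{C:orthotope1} as the one-dimensional specialization ($d=1$) of Theorem \myref{T:orthotope}. The main tasks are to translate the notation and to check that the general formula reduces, in dimension one, to the simpler form claimed; no new combinatorics is needed.

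First I would verify the identification of thresholds. For $\h_i \in \fW_2$ in dimension $1$, $\bh_i = \min \h_i$ and $\bar\H_i = [\bh_i,\infty) \setm \h_i$ is finite by definition of $\fW_2$. Then $\hat\bh_i = \bigvee \bar\H_i$ equals $\max(\bbZ \setm \h_i)$, because any integer below $\bh_i$ is already dominated by elements of $\bar\H_i$ (if nonempty); the edge case $\bar\H_i = \eset$ is handled by the convention $\hat\bh_i = \bh_i - 1$, which also equals $\max(\bbZ \setm \h_i)$. Consequently
$$
\bar{\hat\bm}_i\Phih = \bigvee_{j=1}^n \big(\hat\bh_j + \alpha_{ji}(\Phi)\big) = \max_{j=1,\ldots,n} \big[\max(\bbZ \setm \h_j) + \alpha_{ji}(\Phi)\big] = m_{0i},
$$
so the hypothesis $\bm \geq \bar{\hat\bm}\Phih$ of Theorem \myref{T:orthotope} is exactly the hypothesis $m_i \geq m_{0i}$ of the corollary.

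Second, I would invoke Theorem \myref{T:orthotope} with $d=1$ to conclude that $\chi_\Phih(\bm) = p(\bm)$ on this domain, and that $p$ is a piecewise polynomial in the $n$ variables $m_i$ with leading monomial $m_1 \cdots m_n$. Linearity in each $m_i$ is visible from the $d=1$ form of $q_1(B,W) = \min_{v_i \in W}(m_i + \eta_B(v_i)) - \max_{v_i \in W}(h_i + \eta_B(v_i)) + 1$: on each full-dimensional domain fixed by the relative ordering of the shifted values $\{m_i + \eta_B(v_i)\}_{v_i \in W}$, only one $m_i$ appears per factor and it appears linearly. Since $\pi(B)$ is a partition of $V$, distinct factors within a single term of the M\"obius sum involve disjoint sets of vertices, so each $m_i$ occurs in at most one factor, and the total degree of that term equals $c(B)$.

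Third, I would confirm monicness by isolating the top-degree contribution. The only $B \in \Latb\Phi$ with $c(B) = n$ is $B = \eset$; its contribution is $\mu(\eset,\eset) \prod_{i=1}^n (m_i - h_i + 1 - |\bar\H_i|)$, valid for $m_i \geq \hat\bh_i$, which is multilinear with leading coefficient $1$ on $m_1 \cdots m_n$. Every other $B$ has $c(B) < n$ (since $\Phi$ has no balanced loops or loose edges, any $B \neq \eset$ merges at least two vertices into one balanced component), so contributes nothing to the top monomial. The only genuine checking point---and thus the only real obstacle---is the uniform identity $\hat\bh_i = \max(\bbZ \setm \h_i)$ across all $\h_i \in \fW_2$ in dimension one (including the degenerate case $\bar\H_i = \eset$); once that is verified the corollary follows directly from Theorem \myref{T:orthotope}.
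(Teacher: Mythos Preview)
Your proposal is correct and follows exactly the paper's approach: the paper gives no separate proof of this corollary but simply presents it as the $d=1$ specialization of Theorem \ref{T:orthotope}, after recording in the preceding paragraph that $\bh_i = \min \h_i$, $\hat\bh_i = \max(\bbZ \setm \h_i)$, and $\alpha_{ji}(\Phi)$ is the largest gain of a path from $v_j$ to $v_i$. Your verification of $\hat\bh_i = \max(\bbZ\setm\h_i)$ (including the degenerate case $\bar\H_i=\eset$) and your identification $\bar{\hat\bm}_i\Phih = m_{0i}$ are exactly the translations the paper states, and your degree and monicness checks are correct elaborations of what Theorem \ref{T:orthotope} already asserts about the leading term.
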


Now we begin to justify the claim that a total dichromatic polynomial connected with an interesting combinatorial problem has an uncountable number of variables.  
(We assume the reader finds the list chromatic function with gains in $\bbZ$ or $\bbZ^d$ interesting, or this argument fails!  The geometrization in the next subsection may add to the interest.)  
The number of variables $u_{\H',\bm'}$ when the weight semigroup is $\fW_1$ (and $\bm' \in \bbZ^d$) is $|\fM_0| = |\fW_1| \cdot |\bbZ^d|$.  As $\fW_1$ contains every subset of the natural numbers, its cardinality is that of the continuum.

On the other hand, $|\fW_2| = \aleph_0$, for $\fW_2$ is a countable union of countable sets.  
We see this by describing $\H'\in\fW_2$ as an ordered pair $(\bh', \bar\H')$.  There is a countable number of pairs $(\ba,X)$ of this type, for each $\ba$, and the number of integer vectors $\ba$ is countable.

\subsection{Arrangements of affinographic hyperplanes}\label{affino}

Finally, we prove the geometrical theorems stated or mentioned in the introduction.  

We state the exact version of Theorem \ref{T:geomorthotope}.  As usual, $\Phi$ is the gain graph corresponding to $\cA$; $t_k$ is a top vertex of the component $B_k$ of $B$, whose vertex set is $W_k$; and $\mu$ is the M\"obius function of the semilattice $\Latb\Phi$.  By $(\Phi,0)$ we mean $\Phi$ with the constant weight function $0$.

\begin{thm}\label{T:geomorthotopemu}
With $P$ and $\cA$ as in Theorem \ref{T:geomorthotope}, the number of integer points in $P \setm \bigcup\cA$ equals 
\begin{align*}
\chi_{(\Phi,0)}(m_1,\ldots,m_n) 
 = \sum_{B \in \Latb\Phi} \mu(\eset,B) \prod_{W_k \in \pi(B)} 
  \big( 1 + \min_{v_i\in W_k} [m_i+\phi(B_{v_it_k})] - g_k \big)^+  ,
\end{align*}
where $g_k$ is the maximum gain of a path in $B\ind W_k$.
\end{thm}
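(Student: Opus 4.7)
The plan is to realize the lattice-point count as a proper-coloration count of an integral gain graph and then apply the list-chromatic machinery of Section~\myref{filtered}.

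First I would set up the translation. Let $\Phi$ be the integral gain graph on $V=\{v_1,\dots,v_n\}$ whose edges are $ae_{ij}$ for each hyperplane $x_j=x_i+a$ in $\cA$; since an affinographic hyperplane has $i\ne j$, $\Phi$ has no loops. Give $\Phi$ the constant weight function $h\equiv 0$, regarded as the list weights $\h_i=[0,\infty)\in\fW_2$ (so $\bh_i=0$ and $\bar\H_i=\eset$), and use the principal ideal color filters $M_i=(-\infty,m_i]$. Then an integer point $(x_1,\dots,x_n)\in P\setm\bigcup\cA$ is precisely a function $x:V\to\bbZ$ with $x_i\in\h_i\cap M_i=[0,m_i]$ for all $i$ and with $x_j\ne x_i+\phi(e_{ij})$ on every edge, i.e., a proper list coloration. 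Hence the count in question equals $\chi_{(\Phi,0)}(m_1,\dots,m_n)$.

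Next I would apply Proposition~\myref{L:genlist}, which is available because each $\h_i\cap M_i$ is finite:
\begin{equation*}
\chi_{(\Phi,0)}(\bm)=\sum_{B\in\Latb\Phi}\mu(\eset,B)\prod_{W_k\in\pi(B)}\bigl|h/B(W_k)\cap M/B(W_k)\bigr|.
\end{equation*}
It remains to show that each factor equals $\bigl(1+\min_{v_i\in W_k}[m_i+\phi(B_{v_it_k})]-g_k\bigr)^+$.

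Now I would evaluate each factor using a top vertex $t_k$ of the component $B_k$, which exists because $\bbZ$ is totally ordered; by \eqref{E:topvertexsw}, $\eta_B(v_i)=\phi(B_{v_it_k})$ for $v_i\in W_k$, and all these values are $\ge 0$. Translating,
\begin{equation*}
h/B(W_k)=\bigcap_{v_i\in W_k}\bigl([0,\infty)+\eta_B(v_i)\bigr)=\bigl[\max_{v_i\in W_k}\phi(B_{v_it_k}),\,\infty\bigr),
\end{equation*}
\begin{equation*}
M/B(W_k)=\bigcap_{v_i\in W_k}\bigl((-\infty,m_i]+\eta_B(v_i)\bigr)=\bigl(-\infty,\,\min_{v_i\in W_k}[m_i+\phi(B_{v_it_k})]\bigr].
\end{equation*}
The intersection is an integer interval of cardinality $\bigl(1+\min_{v_i\in W_k}[m_i+\phi(B_{v_it_k})]-\max_{v_i\in W_k}\phi(B_{v_it_k})\bigr)^+$.

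Finally I would identify $\max_{v_i\in W_k}\phi(B_{v_it_k})$ with $g_k$. Since $B$ is balanced, the gain of any path in $B\ind W_k$ from $v_i$ to $v_j$ equals $\phi(B_{v_it_k})-\phi(B_{v_jt_k})$; because all these shifts are $\ge 0$ and $\phi(B_{t_kt_k})=0$, the maximum over all such paths is attained when $v_j=t_k$, giving exactly $\max_{v_i\in W_k}\phi(B_{v_it_k})=g_k$. Substituting yields the claimed formula. The only nonroutine step is this last identification, but it is immediate from the defining property of the top vertex, so I do not anticipate a serious obstacle.
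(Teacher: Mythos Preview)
Your proposal is correct and follows essentially the same route as the paper's own proof: translate lattice points into proper list colorations with $\h_i=[0,\infty)$ and $M_i=(-\infty,m_i]$, apply Proposition~\myref{L:genlist}, and evaluate each factor via the top-vertex formula \eqref{E:topvertexsw}. Your explicit justification that $\max_{v_i\in W_k}\phi(B_{v_it_k})=g_k$ is exactly the remark the paper records just after the theorem statement, so there is no divergence in method.
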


The right-hand side is $(-1)^nQ_\Phih(\bu,-1,0)$ with $u_{[a,\infty),(\infty,m_i]}$ set equal to $-|[a,m_i]|$, as in Theorem \ref{T:genlist}.
Note that $g_k$ equals the largest gain of paths that end at $t_k$, i.e., 
$$g_k = \max_{v_i\in W_k} \phi(B_{v_it_k}).$$

\begin{proof}
The lattice points to be counted are simply proper colorations in disguise.  
In Proposition \ref{L:genlist} take the list for $v_i$ to be $\H_i=[0,\infty)$ and the filter to be $M_i=(-\infty,m_i]$, and then sort through the definitions.  
E.g., from Equation \eqref{E:topvertexsw}, if $v_i$ is in the component $B_k$ of $B$, then $\eta_B(v_i) = \phi(B_{v_it_k})$.  
Also, letting $B_k:=B\ind W_k$, then $\H/B(W_k) = [ g_k, \infty )$ where $g_k$ is the largest gain of a path in $B_k$, and $M/B(W_k) = ( -\infty, \min_{v_i\in W_k} m_i+\phi(B_{v_it_k}) ]$.  
Hence, the factor in Proposition \ref{L:genlist} equals 
$$
\big|[ g_k, \min_{v_i\in W_k} m_i+\phi(B_{v_it_k}) ]\big| 
 = \big( 1 + \min_{v_i\in W_k} [m_i+\phi(B_{v_it_k})] - g_k \big)^+.
$$

Thus we have Theorem \ref{T:geomorthotope}.  
Theorem \ref{T:geomorthotopemu} follows by the formula for $\mu$ given at Proposition \ref{P:finlist}.
\end{proof}

The specialization to ordinary graphs is Corollary \ref{C:geomorthotope}.

For a second example, suppose that for each coordinate $x_i$ we have a finite list $L_i \subset \bbZ$ of possible values.  We want the number of lattice points in $L_1 \times \cdots \times L_n$ that are not in any hyperplane of $\cA$.  This generalizes the preceding theorem, but the viewpoint is different and the formula is more complex.

\begin{thm}\label{T:geomspec}
The number of these lattice points is given by the formula
\begin{align*}
\sum_{B \subseteq E: \text{ balanced}} (-1)^{|B|} \prod_{B_k} \, \big| \bigcap_{v_i\in V(B_k)} \big(L_i+\phi(B_{v_it_k})\big) \big|  ,
\end{align*}
where $L_i+g$ denotes the translate of $L_i$ by $g$, and the product is over all components $B_k$ of $B$.
\end{thm}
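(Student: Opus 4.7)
The plan is to identify the lattice points being counted as proper list colorations of a suitable weighted gain graph, and then apply Proposition \myref{P:finlist}.

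First, I would set up the gain-graph translation of the problem exactly as in the introduction: the arrangement $\cA$ of affinographic hyperplanes $x_j = x_i + a$ in $\bbR^n$ corresponds to a gain graph $\Phi$ on vertex set $V = \{v_1,\ldots,v_n\}$ with gain group $\fG = (\bbZ,+)$ acting on the color set $\fC = \bbZ$ by translation. An integer point $(x_1,\ldots,x_n)$ avoids every hyperplane of $\cA$ if and only if the coloration $v_i \mapsto x_i$ is proper in the gain-graph sense. Requiring additionally that $x_i \in L_i$ for every $i$ is the same as imposing the finite vertex list $h_i := L_i$ in the sense of Section \myref{list}. Since all $h_i$ are finite, Proposition \myref{P:finlist} applies and gives
\begin{equation*}
 \#\{\text{valid points}\} = \sum_{B \subseteq E:\ \text{balanced}} (-1)^{|B|} \prod_{W \in \pi(B)} |h/B(W)|.
\end{equation*}

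Next I would simplify the factor $|h/B(W)|$ using the top-switching-function description of $h/B$. Since the gain group $\bbZ$ is totally ordered, every balanced component $B_k$ of $B$ has a top vertex $t_k$, so by formula \eqref{E:topvertexsw} the top switching function satisfies $\eta_B(v_i) = \phi(B_{v_it_k})$ for every $v_i \in W_k := V(B_k)$. In the list-coloring contraction formula from Section \myref{list},
\begin{equation*}
 h/B(W_k) \;=\; \bigcap_{v_i \in W_k} h_i \eta_B(v_i) \;=\; \bigcap_{v_i \in W_k} \bigl(L_i + \phi(B_{v_it_k})\bigr),
\end{equation*}
where the group action is ordinary integer translation. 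Substituting this into the previous display yields exactly the claimed formula.

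The proof is essentially bookkeeping: the main conceptual step is the identification of excluded lattice points with improper edges of a coloration (already handled by Lemma \myref{L:improper} and the setup in Section \myref{list}), and the only computation is translating the abstract contracted weight $h/B(W)$ into the concrete intersection of translated lists. There is no real obstacle, since totality of the order on $\bbZ$ guarantees the existence of $t_k$ and hence the clean expression $\eta_B(v_i) = \phi(B_{v_it_k})$. The only minor point to note is that balanced components may consist of a single isolated vertex, in which case $B_{v_it_k}$ is the empty walk of gain $0$ and the factor reduces to $|L_i|$, as expected.
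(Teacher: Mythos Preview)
Your proposal is correct and follows essentially the same route as the paper: the paper's proof says only that the result ``follows directly from Proposition \ref{P:finlist} and the formula for $\eta_B(v_i)$,'' which is precisely the two-step argument you give (apply Proposition \ref{P:finlist} with $h_i = L_i$, then rewrite $h/B(W)$ via \eqref{E:topvertexsw}). Your write-up is in fact more detailed than the paper's one-line proof.
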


That is, we take the intersection of translates of the lists, governed by the gains of paths in the chosen balanced edge set $B$.  

\begin{proof}
The theorem follows directly from Proposition \ref{P:finlist} and the formula for $\eta_B(v_i)$.
\end{proof}

The ordinary-graph version of Theorem \ref{T:geomspec} extends Corollary \ref{C:geomorthotope} to general list coloring by giving an exact formula for the number of proper list colorations if the lists are finite.  (We are not aware of any such previously published formula.)  If $L(v)$ is a set associated to each vertex $v\in V$ and $W\subseteq V$, define $L(W):=\bigcap_{v\in W} L(v)$.

\begin{cor}[List coloring count]\label{C:geomspec}
Let $\Gamma$ be a graph in which each vertex $v$ is equipped with a finite list $L(v)$ of permitted colors.  The number of proper list colorations of $\Gamma$ is
\begin{align*}
&\sum_{B \in \Lat\Gamma} \mu_\Gamma(\eset,B)  \prod_{W \in \pi(B)}  | L(W) | 
= \sum_{B \subseteq E} (-1)^{|B|} \prod_{W \in \pi(B)} \, | L(W) |  .
\end{align*}
\end{cor}

\begin{proof}
We may assume without loss of generality that the list elements are integers.  Then this corollary is the specialization of Theorem \ref{T:geomspec} in the same way Corollary \ref{C:geomorthotope} specializes Theorem \ref{T:geomorthotopemu}.  We remark that this corollary can be proved directly by M\"obius inversion.
\end{proof}

We state one more theorem, a combination of the previous two.  Here we have for each coordinate a fixed list $L_i$ of nonnegative integral permitted values, which may be infinite, and a variable integral upper bound $m_i$.  Again we take the orthotope $P := [0,m_1]\times\cdots\times[0,m_n]$.

\begin{thm}\label{T:geomorthospec}
The number of points in $P \cap (L_1 \times \cdots \times L_n)$ but not in any of the hyperplanes of the arrangement $\cA$ equals 
\begin{align*}
\sum_{B \in \Latb\Phi} \mu(\eset,B) \prod_{B_k} 
  \big| \bigcap_{v_i\in V(B_k)} \big( (L_i \cap [0,m_i]) +\phi(B_{v_it_k}) \big) \big| ,
\end{align*}
where the product is over all components of $B$.
\end{thm}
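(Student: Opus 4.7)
The plan is to obtain Theorem \myref{T:geomorthospec} as a direct application of Proposition \myref{L:genlist}, exactly in the style of the proofs of Theorems \myref{T:geomorthotopemu} and \myref{T:geomspec}, by reinterpreting the lattice points we want to count as proper colorations of the weighted gain graph $(\Phi,0)$ associated with $\cA$. The gain group and the color set are both $\bbZ$, acting on itself by translation; at each vertex $v_i$ we install the list $h_i := L_i$ and the filter $M_i := (-\infty, m_i]$. Since $L_i \subseteq \bbZ_{\geq 0}$, the intersection $h_i \cap M_i = L_i \cap [0,m_i]$ is finite, so the hypothesis of Proposition \myref{L:genlist} is satisfied, and a proper coloration $\bx$ with $x_i \in h_i \cap M_i$ is exactly an integer point of $P \cap (L_1 \times \cdots \times L_n)$ avoiding every hyperplane of $\cA$.

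Next, I would insert the formula from Proposition \myref{L:genlist},
\begin{equation*}
\chi_{(\Phi,0)}(M) = \sum_{B \in \Latb\Phi} \mu(\eset,B) \prod_{W \in \pi(B)} \big| h/B(W) \cap M/B(W) \big|,
\end{equation*}
and simplify each factor. Because the $\fG$-action is additive translation, the formulas of Section \myref{list} give
\begin{equation*}
h/B(W) \cap M/B(W) = \bigcap_{v_i \in W} \big( L_i + \eta_B(v_i) \big) \,\cap\, \bigcap_{v_i \in W} \big( (-\infty,m_i] + \eta_B(v_i) \big).
\end{equation*}
Since translation by $\eta_B(v_i)$ commutes with intersection, this collapses to
\begin{equation*}
\bigcap_{v_i \in W} \big( \bigl(L_i \cap [0,m_i]\bigr) + \eta_B(v_i) \big),
\end{equation*}
where I have used $L_i \subseteq \bbZ_{\geq 0}$ to replace $(-\infty,m_i]$ by $[0,m_i]$ inside the intersection with $L_i$.

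Finally, I identify $\eta_B(v_i)$ with the path gain $\phi(B_{v_it_k})$ whenever $v_i$ lies in the component $B_k$ with top vertex $t_k$, exactly as in the proof of Theorem \myref{T:geomorthotopemu}; this is justified by Equation \eqref{E:topvertexsw}, which applies because the totally ordered group $\bbZ$ guarantees the existence of a top vertex in each component of $B$. Substituting into the product over $W \in \pi(B)$, the factors become
\begin{equation*}
\Big| \bigcap_{v_i \in V(B_k)} \big( (L_i \cap [0,m_i]) + \phi(B_{v_it_k}) \big) \Big|,
\end{equation*}
which yields the claimed formula. No step is a real obstacle; the only matter requiring any care is verifying that the translation action interacts correctly with both $h_i$ and $M_i$ so that the cut-down list $L_i \cap [0,m_i]$ emerges inside the intersection, and that the filter/weight separation allows the finiteness hypothesis of Proposition \myref{L:genlist} to be met without assuming $L_i$ itself is finite.
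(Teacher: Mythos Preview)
Your proof is correct and follows essentially the same route as the paper, which simply says the argument is ``similar to that of Theorem \ref{T:geomspec}''; you have supplied exactly the details one would fill in, namely applying Proposition \ref{L:genlist} (equivalently Proposition \ref{P:finlist} with $h_i$ replaced by $L_i\cap[0,m_i]$) and invoking Equation \eqref{E:topvertexsw} for $\eta_B(v_i)$. One small notational slip: you first write the weighted gain graph as $(\Phi,0)$ and then set $h_i:=L_i$; drop the ``$0$'' and simply say you take the gain graph $\Phi$ with list weights $h_i=L_i$ and filters $M_i=(-\infty,m_i]$.
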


\begin{proof}
Substitute $L_i \cap [0,m_i]$ for $L_i$ in Theorem \ref{T:geomspec}.
\end{proof}

When $L_i$ has finite complement in the nonnegative integers, then for sufficiently large variables $m_i$ this count is a polynomial in the variables, just as in Theorem \ref{T:geomorthotope}.

\subsection{Affinographic matrix subspaces}\label{affinomatrix}

Theorem \ref{T:orthotope} allows us to count integer matrices that are contained in an orthotope but not in any of a finite set of subspaces that are determined by affinographic equations.

Write $\bbZ^{n\times d}$ for the lattice of $n \times d$ integer matrices and $\bbR^{n\times d}$ for the real vector space that contains them; if $X$ is a matrix, we write $\bx_i = (x_{i1},\ldots,x_{id})$ for the $i$th row vector, an element of $\bbR^d$.  
An integral orthotope $[H,M]$, where $H$ and $M$ are integer matrices with $H \leq M$, is the convex polytope given by the constraints $H \leq X \leq M$ in $\bbR^{n\times d}$.  

We call a subspace determined by an equation of the form $\bx_j = \bx_i + \ba$ \emph{row-affinographic}, and \emph{integral} if $\ba$ is an integral vector in $\bbR^d$.  (The name ``affinographic'' comes from the fact that such a subspace is an affine translate of a graphic subspace, i.e., a subspace defined by lists of equal coordinates, in this case by the equation $\bx_j = \bx_i$.)  A finite set $\cS$ of such subspaces is an \emph{integral row-affinographic subspace arrangement}.

We want to know the number $N$ of integral matrices in an integral orthotope $[H,M]$ but not in any of the subspaces of $\cS$.  This number is given by Theorem \ref{T:orthotope}.  Rather than translate the theorem into purely geometrical language, which seems unnatural, we explain how to set up a weighted gain graph $\Phih$ to which it applies, thereby getting the formula
$$
N = \chi_\Phih(\bm_1,\ldots,\bm_n).
$$
There is one vertex for each row of the matrices; thus, $V = \{v_1,\ldots,v_n\}$.  There is one edge for each subspace; that with equation $\bx_j = \bx_i + \ba$ becomes an edge from $v_i$ to $v_j$ with gain $\ba$ (in that direction; the gain from $v_j$ to $v_i$ is $-\ba$).  The weight of $v_i$ is the principal dual ideal $\langle \bh_i \rangle^*$.  
An integral matrix $X$ in the orthotope becomes a coloration, the color of $v_i$ being the $i$th row vector $\bx_i$.  
It is now clear that an integral matrix that we wish to count is precisely the same as a proper coloration of $\Phih$ that satisfies the upper bound $(\bm_1,\ldots,\bm_n)$.

\section{Graphs with weights but not gains: Noble and Welsh generalized}\label{ordinary}

We think of a graph without gains as having all gains equal to the identity $\1$.  It is instructive to see what our results say here.  We assume there are no half or loose edges and we write $\G$ for $\Phi = (\G,1)$ to emphasize that, the gains being trivial, the only significant datum is the graph.  Since the graph is balanced, $b(S) = c(S)$ and $\pib(S)$ is a partition of $V$ for every edge set $S$.  

A \emph{$\fW$-weighted graph} is a pair $(\G,h)$ where $h: V \to \fW$.  There is no need for switching; thus contraction is ordinary graph contraction together with contraction of $h$ to 
$$
h(W) = \sum_{v_i \in W} h_i 
$$
for $W\in\pi(S)$, where summation means the semigroup operation and the subscript $S$ is superfluous because there is no switching.  
The total dichromatic polynomial becomes
\begin{align}
Q_{(\G,h)}(\bu,v,z) &= \sum_{S \subseteq E} v^{|S|-n+c(S)} \prod_{W \in \pi(S)} u_{h(W)}  \label{E:graphpolyq}
\intertext{with tree expansion}
&= \sum_T (v+1)^{\epsilon(T)} \sum_{\substack{F\subseteq T\\F\supseteq \II(T)}} \prod_{W\in\pi(F)}  u_{h(W)} .  
\label{E:graphpolyt}
\end{align}
Observe that $z$ drops out; thus we write $Q_{(\G,h)}(\bu,v)$ for this polynomial.  

These graphs with weights but no gains subsume the weighted graphs $(\G,\omega)$ of Noble and Welsh \cite{NW}, which have positive integral vertex weights.  Indeed, their work largely inspired our generalization to semigroup weights.  At first we had the total dichromatic polynomial only for weighted integral gain graphs with integral weights, but we compared their definitions to ours and noticed remarkable analogies.   Noble and Welsh's weights add: $\omega(W) = \sum_{w \in W} \omega(w)$, while the weights on weighted integral gain graphs maximize.  Our polynomial $Q_{(\G,h)}$ (for weighted integral gain graphs) and the polynomial $W_{(\G,\omega)}$ of \cite{NW} have virtually the same variables (if one makes simple substitutions) and satisfy the same Tutte relations (Ti--Tiii), initial conditions \eqref{E:initial}, and loop reduction identity \eqref{E:loop}.  
We had to suspect a common generalization.  This paper is the result.

The theorem without gains is stronger than our broader results because, unlike when there are gains, the dichromatic polynomial is universal.  

\begin{thm}\label{T:nwgen} 
Given an abelian semigroup $\fW$, the polynomial-valued function $(\G,h) \mapsto Q_{(\G,h)}(\bu,v)$ of $\fW$-weighted graphs is universal with the properties {\rm(Ti)}, {\rm(Tiii)}, {\rm(Tiv)}, \eqref{E:initial}, and \eqref{E:loop}; {\rm(Tii)} holds; and there is a tree expansion as in \eqref{E:graphpolyt}.
\end{thm}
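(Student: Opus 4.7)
The plan has three parts: verifying that $Q_{(\G,h)}(\bu,v)$ itself satisfies the listed properties, deriving the tree expansion \eqref{E:graphpolyt}, and establishing universality.

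First, since every edge set in a trivially-gained graph is balanced, $b(S)=c(S)$ identically, so the factor $z^{c(S)-b(S)}$ in \eqref{E:dichromaticpoly} is constantly $1$ and the definition collapses to \eqref{E:graphpolyq}; in particular $z$ really does drop out. Properties (Ti)--(Tiv) then transfer directly from Theorem \myref{T:dandc}, while \eqref{E:initial} and \eqref{E:loop} are immediate from the definition once one notes that every loop in a gainless graph is automatically balanced.

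For the tree expansion, I would apply Theorem \myref{T:tree} with $y=v+1$ and reorganize its spanning-forest sum by grouping forests according to an associated spanning tree. The key point is that when gains are trivial the complete lift matroid $L_0(\Phi)$ has $e_0$ as a coloop, so the semimatroid of graph balance is actually the cycle matroid $G(\G)$; its semibases are then genuine bases, and Lemma \myref{L:rgabasis} sharpens to the standard matroid-interval statement that $T(F)=T$ iff $\II(T)\subseteq F\subseteq T$, with $\epsilon(F)=\epsilon(T)$ constant on this interval by Lemma \myref{L:rgabasis}(iv). This yields \eqref{E:graphpolyt} for connected $\G$, and (Tii) extends it to the disconnected case by factoring over components.

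For universality, suppose $f$ satisfies (Ti), (Tiii), (Tiv), \eqref{E:initial}, and \eqref{E:loop} (with the same parameter $v$ appearing in $f$'s loop reduction as in $Q$'s). Set $u_h:=f(\bullet_h)$, the value of $f$ on the one-vertex graph of weight $h$. I would argue by induction on $|E|$ that $f(\G,h)$ equals $Q_{(\G,h)}(\bu,v)$ under this substitution. The base $|E|=0$ is \eqref{E:initial} read on both sides. For the inductive step, choose any $e\in E$: if $e$ is a link, (Ti) gives the identical three-term recurrence for $f$ and $Q$ on the strictly smaller graphs $\G\setminus e$ and $\G/e$; if $e$ is a loop, \eqref{E:loop} gives both the identical two-term recurrence $f=(v+1)f\setminus e$. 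Either way the inductive hypothesis closes the argument, and (Tii) is never invoked in this step, matching the theorem statement.

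The main obstacle I expect is the tree-expansion step, specifically verifying the matroid-interval identification $\{F:T(F)=T\}=[\II(T),T]$: one must show that the reverse-greedy procedure applied to any $F$ in this interval produces exactly $T$ and never strays outside it, using that an added edge outside $T$ would be externally active with respect to $T$ and hence blocked by a smaller tree-edge in its fundamental circuit. By contrast, the universality proof is essentially a formal induction once the deletion-contraction and loop-reduction rules are recognized as deterministic reductions with a common explicit base case.
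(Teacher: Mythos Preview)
Your proposal is correct and is, in fact, considerably more detailed than the paper's own proof, which consists of the single sentence ``The proof is like that of Noble and Welsh.'' Your three-part outline (inherit the Tutte-invariant properties from Theorem~\ref{T:dandc}, regroup the forest expansion of Theorem~\ref{T:tree} into a tree sum via the interval $[\II(T),T]$, and prove universality by induction on $|E|$ using (Ti) for links and \eqref{E:loop} for loops) is exactly the Noble--Welsh argument the paper is invoking.

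One reassurance on your stated ``main obstacle'': the interval identification $\{F:T(F)=T\}=[\II(T),T]$ that you flag as needing care is precisely the fact the paper records in the paragraph following Theorem~\ref{T:tree}, where it notes that when $\Phi$ is balanced the semimatroid is the cycle matroid, a semibasis is a basis, and then $T(F)=T$ iff $\II(T)\subseteq F\subseteq T$. So that step is already granted in the paper's setup, and your use of Lemma~\ref{L:rgabasis}(iv) to carry $\epsilon(F)=\epsilon(T)$ across the interval is the right justification.
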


\begin{proof}
The proof is like that of Noble and Welsh.
\end{proof}

(Added during revision in 2013:   Although the definitions are different, our weighted-graph dichromatic polynomial $Q_{(\G,h)}(\bu,v)$ is the same as the special case where all $\gamma_e=1$ of Ellis-Monaghan and Moffatt's $V$-polynomial \cite{EMM}, which they arrived at independently.  Equality is proved by universality, i.e., Theorem \ref{T:nwgen}, or by the subset expansion in \cite[Theorem 3.3]{EMM}, which coincides with our definition.)

The treatment of coloring in Section \ref{coloration} applies to ordinary graphs, without gains, simply by taking $\phi \equiv \1$, the identity.  The only differences are that every edge set is balanced, so $\Latb\Phi$ is the class of all closed edge sets (cf.\ the end of Section \ref{gaingraphs}), and that $\H/B(W)$ becomes simply $\H(W) = \bigcap_{v_i \in W} \H_i.$  Also, the weight semigroup $\fW$ need only be closed under intersection, as there is no group action constraining it.  

Taking weight semigroup $\bbZ^d$ as in Section \ref{intlatticelist} and treating the graph as having all zero gains, we have the following corollary of Theorem \ref{T:orthotope}; the notation is that of the theorem.

\begin{cor}\label{C:ordinaryorthotope}
Let $(\G,\H)$ be a weighted graph with weights $\H_i \in \fW_2$.  
Assume $\G$ has no loops or loose or half edges.  
For large enough $\bm$, $\chi_{(\G,\H)}(\bm)$ is a monic polynomial function of the $nd$ variables $m_{ik}$, having degree $1$ in each variable and highest-degree term $\prod_{i=1}^n \prod_{k=1}^d m_{ik}$.  Polynomiality holds when all $\bm_i \geq \bigvee_{j=1}^n \hat\bh_j .$
\end{cor}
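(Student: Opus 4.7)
The plan is to specialize Theorem \myref{T:orthotope} to the case $\phi\equiv\1$, regarding $(\G,\h)$ as a $\bbZ^d$-gain graph with all identity gains. Under this convention an ordinary loop is a balanced loop, so the hypothesis ``no balanced loops or loose edges'' of Theorem \myref{T:orthotope} becomes exactly ``no loops or loose edges'' here, matching the corollary; half edges are excluded \emph{ab initio} and in any case never arise from contractions when all gains are trivial.

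First I would record the simplifications forced by trivial gains. Every walk in $\G$ has gain $0\in\bbZ^d$, so
$$
\alpha_{ji}(\Phi) = \bigvee_{P_{ji}} \phi(P_{ji}) = 0 \qquad \text{for all } i,j,
$$
which gives $\bar{\hat\bm}_i\Phih = \bigvee_{j=1}^n (\hat\bh_j + \alpha_{ji}(\Phi)) = \bigvee_{j=1}^n \hat\bh_j$, independent of $i$. This is exactly the polynomiality threshold appearing in the corollary. Moreover, for each balanced edge set $B$ the top switching function $\eta_B$ is identically $0$, since the defining meet over each component of $B$ is a meet of zeroes. Consequently the translations $\eta_B(v_i)$ drop out of both $q_k(B,W)$ and the correction set $\bigcup_{v_i \in W}(\bar\H_i + \eta_B(v_i))$ in Theorem \myref{T:orthotope}.

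Then I would invoke Theorem \myref{T:orthotope} directly. It produces a piecewise polynomial $p(\bm)$ with leading term $\prod_{i,k} m_{ik}$ and degree at most $1$ in each variable, and asserts $\chi_{(\G,\h)}(\bm) = p(\bm)$ for $\bm \geq \bar{\hat\bm}\Phih = \bigvee_{j=1}^n \hat\bh_j$. In that range the simplifications above mean no further comparisons among translated variables $m_{ik} + \eta_B(v_i)_k$ are triggered, so within the stated region $p$ is a single polynomial rather than merely piecewise polynomial, which is precisely the claim. The only thing that requires care is the hypothesis matching between gain-graph and ordinary-graph language, together with verifying that $\eta_B \equiv 0$ and $\alpha_{ji} \equiv 0$ under trivial gains; there is no substantive obstacle beyond this bookkeeping.
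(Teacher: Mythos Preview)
Your approach is the same as the paper's: regard $(\G,\h)$ as a $\bbZ^d$-gain graph with all gains equal to the identity and specialize Theorem~\ref{T:orthotope}. Your verifications that $\alpha_{ji}(\Phi)\equiv 0$, that $\eta_B\equiv 0$ for every balanced $B$, and that consequently $\bar{\hat\bm}_i\Phih=\bigvee_{j=1}^n \hat\bh_j$ are exactly the bookkeeping the paper has in mind when it writes ``treating the graph as having all zero gains, we have the following corollary of Theorem~\ref{T:orthotope}.''

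One step does not go through as written. You assert that, once $\eta_B\equiv 0$, ``no further comparisons among translated variables $m_{ik}+\eta_B(v_i)_k$ are triggered, so within the stated region $p$ is a single polynomial rather than merely piecewise polynomial.'' That is not correct: with $\eta_B\equiv 0$ one still has
\[
q_k(B,W)=\min_{v_i\in W} m_{ik}-\max_{v_i\in W} h_{ik}+1,
\]
and the factor $\min_{v_i\in W} m_{ik}$ is only piecewise linear in the $m_{ik}$; which linear form it equals depends on the relative order of the $m_{ik}$ for $v_i\in W$. Hence $p(\bm)$ remains genuinely piecewise polynomial even in the gainless case. The paper does not claim or argue otherwise: it derives the corollary directly from Theorem~\ref{T:orthotope}, whose conclusion is piecewise polynomiality, and the companion Corollary~\ref{C:graph} states ``piecewise polynomial'' explicitly. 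So read the word ``polynomial'' in the present corollary as shorthand for the piecewise polynomial supplied by the theorem, and drop the argument that the pieces coalesce.
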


When the weights are principal dual ideals $\langle \bh_i \rangle^*$, the lower bound on $\bm_i$ is $\bigvee_j \bh_j^-$.

(Added during revision.)  The special case of a graph colored from $\bbZ_{>0}$ with a finite list $\bar\H_i$ of excluded colors for each vertex $v_i$ was independently known to Carsten Thomassen; he mentioned that a direct proof by deletion and contraction is easy [personal communication to T.\ Zaslavsky, 21 January 2008].  That shows that the complications in our work come from gains, not coloring.  We conclude---generalizing Thomassen's observation---by stating the graph version of Theorem \ref{T:orthotope}.  Suppose each vertex $v_i$ has a weight $\bh_i = (h_{i1},\ldots,h_{id}) \in \bbZ^d$ and a finite exclusion set $\bar\H_i \subseteq \langle\bh_i\rangle^*$; a proper coloration $\bx:V\to\bbZ^d$ satisfies the requirement that the color $\bx_i$ of $v_i$ belongs to $\H_i = \langle\bh_i\rangle^* \setm \bar\H_i$.  $\Lat\G$ is the family of all closed edge sets.  

\begin{cor}\label{C:graph}
Let $\G$ be a simple graph with $\H$ and $\bar\H$ as described.  Let $\bm = (\bm_1,\ldots,\bm_n) \in (\bbZ^d)^n$.  
Then the number of proper colorations $\bx$ of $(\G,\H)$ such that $\bx \leq \bm$ is a piecewise polynomial function of $\bm$ with leading term $\prod_{i=1}^n \prod_{k=1}^d m_{ik}$, and when all upper bounds satisfy $\bm_j \geq \bigvee \bar\H_j$, it equals the piecewise polynomial 
$$
\bar p(\bm) := \sum_{B \in \Lat\G} \mu(\eset,B) \prod_{W \in \pi(B)}  \Big(  \prod_{k=1}^d  \big( \min_{v_i \in W} m_{ik} + 1 - \max_{v_i \in W} h_{ik} \big)  -  \big| \bigcup_{v_i \in W} \bar\H_i  \big|  \Big) .
$$
\end{cor}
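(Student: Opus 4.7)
The plan is to derive the corollary directly as the specialization of Theorem~\ref{T:orthotope} to the gain graph $\Phi=(\G,\1)$ with trivial gains, which is legitimate because $\G$ is assumed to have no loops or loose or half edges and each weight $\h_i = \langle\bh_i\rangle^* \setm \bar\H_i$ belongs to $\fW_2$. Setting $\phi \equiv \1$ collapses the gain machinery considerably: every circle has gain $\1$, so every edge set is balanced and $\Latb\Phi = \Lat\G$; and by Lemma~\ref{L:top}, the top switching function of any closed edge set $B$ is identically $0$, since $\phi(B_{vw}) = 0$ for every path. Consequently every translate $\bar\H_i + \eta_B(v_i)$ in the formula for $p(\bm)$ reduces to $\bar\H_i$, the quantity $q_k(B,W)$ drops its $\eta_B$-shifts, and the formula $p(\bm)$ of Theorem~\ref{T:orthotope} becomes exactly the displayed $\bar p(\bm)$.

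For the threshold on $\bm$, trivial gains give $\alpha_{ji}(\Phi)=0$ whenever $v_j$ and $v_i$ lie in a common connected component, so the theorem's bound $\bar{\hat\bm}_i(\Phih) = \bigvee_j(\hat\bh_j + \alpha_{ji}(\Phi))$ reduces to $\bigvee_{j\sim i} \hat\bh_j$. Once every $\bm_j$ is large enough---which, interpreted uniformly as in the theorem, is implied by taking $\bm_j \geq \bigvee \bar\H_j = \hat\bh_j$ sufficiently large across the component---the identity $\chi_{(\G,\h)}(\bm) = \bar p(\bm)$ follows immediately from Theorem~\ref{T:orthotope}. Piecewise polynomiality on the whole domain $(\bbZ^d)^n$ and the leading monomial $\prod_{i,k} m_{ik}$ are inherited verbatim: the $B=\eset$ term in $\bar p(\bm)$ is the product over vertices of $\prod_k(m_{ik}-h_{ik}+1) - |\bar\H_i|$, which is multilinear of top degree $nd$, and every other closed set $B$ contributes fewer factors (one per block of $\pi(B)$, and $|\pi(B)| < n$ once $B$ has an edge), hence a piecewise polynomial of total degree less than $nd$.

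The only non-mechanical aspect is reconciling the corollary's simple per-vertex lower bound $\bm_j \geq \bigvee \bar\H_j$ with the component-wide bound dictated by Theorem~\ref{T:orthotope}. If one wanted a self-contained proof avoiding this point, it would be more transparent to rerun the argument of Theorem~\ref{T:orthotope} directly in the no-gains setting, starting from Proposition~\ref{L:genlist}:
\[
\chi_{(\G,\h)}(\bm) = \sum_{B \in \Lat\G} \mu(\eset,B) \prod_{W\in\pi(B)} \big| \h(W) \cap M(W) \big|,
\]
and observing that $\h(W) \cap M(W) = \big[\bigvee_{v_i\in W}\bh_i,\,\bigwedge_{v_i\in W}\bm_i\big] \setm \bigcup_{v_i\in W}\bar\H_i$. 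Each factor is then manifestly a piecewise polynomial of degree $1$ in each $m_{ik}$, and once the $\bm_i$ are large enough the whole intersection of exclusion sets lies inside the orthotope, collapsing the final subtracted cardinality to the clean $|\bigcup\bar\H_i|$ appearing in $\bar p(\bm)$.

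The expected main obstacle is precisely this bookkeeping of the sufficient-largeness hypothesis; everything else is substitution. Once the hypothesis is handled, the leading-term statement and piecewise polynomiality are direct consequences, and the Möbius-function identity just records the specialization to $\Lat\G$.
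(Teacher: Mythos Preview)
Your proposal is correct and matches the paper's own treatment: the paper presents Corollary~\ref{C:graph} simply as ``the graph version of Theorem~\ref{T:orthotope}'' and gives no separate proof, so the intended argument is exactly the specialization you carry out---set $\phi\equiv\1$, whence $\eta_B\equiv 0$, $\Latb\Phi=\Lat\G$, and every translate in the formula collapses.

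Your flagging of the threshold discrepancy is well taken. Theorem~\ref{T:orthotope} (with trivial gains) delivers the component-wide bound $\bm_i \geq \bigvee_{j\sim i}\hat\bh_j$, whereas the corollary states only the per-vertex bound $\bm_j \geq \bigvee\bar\H_j = \hat\bh_j$. The latter is genuinely weaker, and a small example (two adjacent vertices with $\bar\H_1$ large and $\bar\H_2$ small) shows that the per-vertex bound alone does not force $\chi=\bar p$. This appears to be a minor imprecision in the paper's statement rather than something your argument fails to handle; your suggestion to rerun the Proposition~\ref{L:genlist} computation directly in the no-gains setting would indeed give the cleanest resolution.
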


Thomassen's case is where all $\bh_i=(1,\ldots,1)$ and all $\bm_i$ are equal and 1-dimensional.  Corollary \ref{C:graph} should have a relatively simple direct proof as well.


\end{document}